\newtheorem{theorem}{Theorem}[section]
\newtheorem{lemma}[theorem]{Lemma}
\newtheorem{proposition}[theorem]{Proposition}
\newtheorem{rem}[theorem]{Remark}
\numberwithin{equation}{section}
\newcommand{\mf}[1]{\mathbf{#1}}
\renewcommand{\(}{\left(}
\renewcommand{\)}{\right)}
\newcommand{\R}{\mathbb R}
\newcommand{\beq}{\begin{equation}}
\newcommand{\eeq}{\end{equation}}
\newcommand{\intR}{\int_{\mathbb R^3}}
\begin{document}
\title[Segregated Solutions]{Segregated solutions for nonlinear Schr\"odinger systems with weak interspecies  forces}
 
\author{Angela Pistoia}
\address[A.Pistoia]{Dipartimento di Scienze di Base e Applicate per l'Ingegneria, Sapienza Universit\`a di Roma, Via Scarpa 16, 00161 Roma, Italy}
\email{angela.pistoia@uniroma1.it}

\author{Giusi Vaira}
\address[G.Vaira]
{Dipartimento di Matematica, Universit\`a degli studi di Bari ``Aldo Moro'', via Edoardo Orabona 4,70125 Bari, Italy}
\email{giusi.vaira@uniba.it}

\begin{abstract}
We find positive non-radial solutions for   a system of Schr\"odinger equations  in a weak fully  attractive or  repulsive regime in presence of an external radial trapping potential that exhibits a maximum or a minimum at infinity. 
\end{abstract}

\date\today
\subjclass[2010]{35B44 (primary),  58C15 (secondary)}
\keywords{Gross-Pitaevskii systems, segregated solutions, spike solutions}
 \thanks{The authors are partially supported by INDAM-GNAMPA funds. A. Pistoia is also partially supported by Fondi di Ateneo ``Sapienza" Universit\`a di Roma (Italy). G. Vaira is also partially supported by PRIN 2017JPCAPN003 ``Qualitative and quantitative aspects of nonlinear PDEs" }

\maketitle

    \section{Introduction}

We are interested in finding positive solutions to  the system
\beq\label{S}  -\Delta u_i +\lambda_i u_i +V_i(x) u_i= \mu_i u_i^3+u_i   \sum \limits_{ j=1\atop j\not=i }^d \beta_{ij} u_j^2  \ \hbox{in}\ \mathbb R^n,\ i =1,\dots,d\eeq
 where $\mu_i >0$, $\lambda_i>0$, $ \beta_{ij} =\beta_{ji}\in\mathbb R$, $V_i\in C^0(\mathbb R^n)$, $d\in\mathbb N$, $n=2,3.$
This system   has been proposed as a mathematical model for multispecies
Bose-Einstein condensation in $m$ different  states:
\beq\label{GP}
- \iota \partial_t  \phi_i = \Delta  \phi_i -V_i(x) \phi_i+ \mu_i| \phi_i|^2 \phi_i+  \sum\limits_{j=1\atop j\not=i}^m\beta_{ij}
| \phi_j|^{2}  \phi_i ,\ i=1,\dots,d\eeq
where the complex valued functions $ \phi_i$'s are the wave functions of
the $i-$th condensate, $|\phi_i|$ is the amplitude of the $i-$th density, $ \mu_i$    describes  the interaction between particles of the same 
component
and $\beta_{ij}$, $i\not=j,$  describes   the interaction between particles   of different components, which can be {\em attractive} if $\beta_{ij}>0$ or {\em repulsive} if
  $\beta_{ij}<0$. 
To obtain solitary wave solutions of the Gross-Pitaevskii system \eqref{GP} we set  $ \phi_i(t,x) = e^{- \iota \lambda_i t} u_i(x)$ and we find real functions $u_i$'s which  solve the system
\eqref{S}. We refer to \cite{11,12,15,23} for a detailed physical motivation.\\

 There are different kind of solutions to \eqref{S}. The  trivial  solution has all trivial components, i.e.  $u_i\equiv0$ for any $i.$
   A  non-trivial  solution has some trivial components, i.e.  $u_i\equiv0$ for some $i$ and in this case the system \eqref{S} reduces to a system with a less number of components. The most interesting solutions  are the so-called {\em fully non-trivial} solutions whose components are    all non-trivial, i.e.  $u_i\not\equiv0$ for any $i$.
Among the set of fully non-trivial solutions we can distinguish among
 { \em synchronized} solutions and {\em non-synchronized} solutions.
  We say that ${\bf u}=(u_1,\dots,u_m)$ is a {\em  synchronized} solution to the system \eqref{S} (if, for example, $\lambda_i=\lambda$ and $V_i(x)=0$ for
  any index $i$),
if   ${\bf u}=( \gamma_1 U,\dots,  \gamma_kU)$ where $ \gamma_i>0$  and
$U$  is a positive solution to  the single equation  
\beq\label{se}-\Delta U+ \lambda  U =U^3\ \hbox{in}\ \mathbb R^n.\eeq
In this case  the system  \eqref{S} reduces   to the algebraic system
$$  \gamma_i=\mu_i  \gamma_i^3+ \gamma_i \sum\limits_{j=1\atop j\not=i}^m\beta_{ij} \gamma_j^{2},\ i=1,\dots,m.
$$ 
 
 Now, let us recall some known results. First, let us focus on the system with two components
 \beq\label{sis2c}\left\{\begin{aligned}
&-\Delta u +\lambda_1 u= \mu_{1}u^{3} +\beta u v^{ 2 }\ \hbox{in}\ \mathbb R^n, \\
&-\Delta v +\lambda_2 v= \mu_{2}v^3+  \beta u^2v\ \hbox{in}\ \mathbb R^ n
\end{aligned}\right.\eeq
It is immediate to check that if $\lambda=\lambda_1=\lambda_2$, the system \eqref{sis2c} has a synchronized solution
\beq\label{sincro2}
(u,v)= \(\gamma_1U,\gamma_2 U\),\ \gamma_1=\sqrt \frac{\beta-\mu_2}{\beta^2 - \mu_1 \mu_2},\ \gamma_2=\sqrt\frac{\beta-\mu_1}{\beta^2 - \mu_1 \mu_2} \eeq
where $U$ solves \eqref{se} provided
 \beq\label{sincrobeta}  -\sqrt{\mu_1 \mu_2} <\beta < \min\{\mu_1,\mu_2\}\ \hbox{or}\ \beta> \max\{\mu_1,\mu_2\}.\eeq
In the attractive case, i.e.  $\beta>0,$ all the positive solutions of \eqref{sis2c}
   are radially symmetric
(up to translation)  and both components are decreasing in the radial
variable (see for example \cite{Troy}).
On the other hand, in the repulsive case, i.e.  $\beta<0,$ there exists a large variety of solutions.  In fact radial solutions  has been found by 
Wei \& Weth \cite{WW2}, who proved that  if  $\beta\le-1$  for any  integer $k$ there exists a  radial solution  $(u_1,u_2)$ to the system
\eqref{sis2c} with $\lambda_i=1$ and $\mu_i=1$
such that the difference 
  $u_1-u_2$ has exactly $k-1$ zeroes and
   converges  as $\beta\to-\infty$ 
to a function $W$ which is radial changing-sign solutions of the scalar equation \eqref{se}.
 Bartsch, Dancer \& Wang \cite{BDW}  extend this result   to a larger
range of parameters $\beta,\mu_1,\mu_2.$  The case of an arbitrary number of components $d\geq3$ has been studied by    Terracini \& Verzini  \cite{tv}.
Problem  \eqref{sis2c} can also have
 non-radial  solutions. When the coupling parameter $\beta<0$  is small, Lin \& Wei \cite{LW} found solutions    with  one component peaking at the origin
and the other having a finite number of peaks on a $k$-polygon in the plane $\mathbb R^2$.
Wei \& Weth \cite{WW1} proved the existence of infinitely many non radial solutions which are  are invariant under
the action of a finite subgroup of $\mathcal O(n)$ (for example they satisfy \eqref{rot}).\\
  
The general case has been firstly considered by
Lin \& Wei \cite{LW2} where they study   the autonomous  system 
 \beq\label{sisg}
 -\Delta u _i+  \lambda_iu_i=   \mu_i u_i^3+\sum\limits_{j=1\atop j\not=i}^d \beta_{ij}u_j^{ 2 }\ \hbox{in}\ \mathbb R^n,  \ i=1,\dots,d
\eeq 
They  prove  the existence of a ground state solution whose components are positive, radially
symmetric and strictly decreasing when  all the $\beta_{ij}$'s are positive and the matrix $\(\beta_{ij}\)_{i,j=1,\dots, d}$ (here we set $\beta_{ii}=\mu_i$) is positively definite and also that
the ground state solution does not exist anymore if  all the $\beta_{ij}$'s are negative. A systematic  analysis of   system \eqref{sisg} under the assumption
that it admits mixed couplings and the components are organized into different  groups has been recently developed by Wei \& Wu \cite{WWu} .
The first result concerning existence of non-radial solutions for systems with 3 components goes back to   Lin \& Wei \cite{LW2} who proved the existence of 
non-radial solutions to  \eqref{sisg}
if all the coupling parameters are small and  repulsion    is  much stronger than  attraction.
Recently, Peng, Wang \& Wang  \cite{PWW}   consider system \eqref{sisg} in the case the repulsive couplings are small    and    obtain   solutions with some of
the components synchronized between them while being segregated with the
rest of the components.\\

All the previous results deal with the autonomous case. The non-autonomous case has been 
studied by
Peng \& Wang \cite{PW}, who considered the system
$$\left\{\begin{aligned}
&-\Delta u _1+  V_1(x)u_1= \mu_1 u_1^{3} +\beta u_1 u_2^{ 2 }\ \hbox{in}\ \mathbb R^n, \\
&-\Delta u_2+ V_2(x) u_2= \mu_2 u_2^3+  \beta u_1^2u_2 \ \hbox{in}\ \mathbb R^ n
\end{aligned}\right.$$
and found infinitely many non-radial positive solutions when the potentials
  $V_1$ and $V_2$  are radially symmetric and satisfy
\beq\label{ipov}V_i(x)\sim 1+{ \mathfrak v_\infty^i\over |x|^{ q _i}}\ \hbox{as}\ |x|\to\infty,\ i=1,2.\eeq
They build
synchronized  solutions (if  $ q _1< q _2,\   \mathfrak v_\infty^1>0$ or $ q _1> q _2,\ \mathfrak v_\infty^2>0$  and the coupling parameter $\beta$ satisfies \eqref{sincrobeta})
 and segregated solutions (if $ q _1= q _2,$ $  \mathfrak v_\infty^1,\mathfrak v_\infty^2>0$ and
   $ {\beta<\beta_0}$ for some $ \beta_0>0$). 
The synchronized solutions look like a sum of $k$ copies of the syncronized   solution  \eqref{sincro2}
 $$( u_1,u_2 )\sim  \(\gamma_1\sum\limits_{i=1}^k U(x-\rho\xi_i),\gamma_2 \sum\limits_{i=1}^k U(x-\rho\xi_i)\)\ \hbox{as}\ k\to\infty$$
where the peaks satisfy
$$\xi_i=\(\cos{2(i-1)\pi\over k},\sin{2(i-1)\pi\over k},0\),\ \hbox{where the radius}\ \rho\sim Rk\ln k\ \hbox{for some $R>0$}.$$
Here $U$  is the unique positive radial solution to the single equation
 \begin{equation}\label{pblim0}
-\Delta U + U = U^3\ \hbox{in}\  \mathbb R^n.
\end{equation}
On the other hand, the profile of each component of the segregated solutions looks like a sum of $k$   copies of the solution to \eqref{pblim0}
 $$( u_1,u_2)\sim \( \sum\limits_{i=1}^k U(x-\rho_1\xi_i), \sum\limits_{i=1}^k U(x-\rho_2\eta_i)\)\ \hbox{as}\ k\to\infty$$
where the peaks $\xi_i$ of the first component are as above, while  the peaks $\eta_i$ of the second component are nothing but the peaks of the first one rotated by an angle $\pi\over k$ and the radii $ \rho_i\sim R_ik\ln k$ for some $R_i>0$.
The proof of their result relies on the idea    by Wei \& Yan  \cite{WY} who found infinitely many solutions  (positive   if $ \mathfrak v_\infty>0$  and 
sign-changing  if $ \mathfrak v_\infty<0$) to  the single Schr\"odinger equation
$$-\Delta u+V(x) u=|u|^{p-1}u\ \hbox{in}\ \mathbb R^n$$
 when   the potential $V$ is radially symmetric and satisfy \eqref{ipo-v}.
\\

 We will focus on the existence of segregated solutions and we will ask a couple of  questions which naturally arise.

\begin{itemize}
\item[(Q1)]  Peng \& Wang's result holds when the system has only two components. Can we find  segregated solutions when the system has  at least three  components?

\item[(Q2)] Peng \& Wang's result holds when all the  $\mathfrak v^i_\infty$'s in \eqref{ipov} are positive.
Do there exist any solutions when some $\mathfrak v^i_\infty$'s are negative? 
\end{itemize}

%

We will give some  partial positive answers in the particular case when all the parameters $\mu_i$'s are equal to $1,$ all the  $\beta_{ij}$'s are equal to a real number $\beta$ and all the potentials
$V_i$'s coincide with the  radial potential $V\in  C^1(\mathbb R^n)$  which satisfies
   for some $\mathfrak v_\infty\in \mathbb R,$ $ \nu> 1$ and $\epsilon>0$  
\begin{equation}\label{ipo-v}
V(|x|)=1+\frac{\mathfrak v_\infty}{|x|^{\nu}}+\mathcal O\(\frac{1}{|x|^{\nu+\epsilon}}\)\quad \hbox{$C^1$-uniformly as}\ |x|\to+\infty,\end{equation} 
so that  the system \eqref{S} reduces to 
\beq\label{sis}
 -\Delta u_i +V (x) u_i=  u_i^3+\beta u_i\sum_{j=1\atop j\neq i}^{d} u_j^2\ \mbox{in}\,\mathbb R^n,\quad i=1, \ldots, d.\eeq 

Our main result is the following one.
\begin{theorem}\label{main}  
{Let $d\ge3 $ and $\nu>\frac2{d-2}.$ There exists   $k_0>0$ such that for any integer $k\ge k_0$ there exists $\beta_k>0$ such that 
for any $\beta\in(0,\beta_k)$ if  $\mathfrak v_\infty>0$  or  for any $\beta\in(-\beta_k,0)$ if  $\mathfrak v_\infty<0$ }
  the system \eqref{sis} has a positive solution $(u_1, \ldots, u_d) $ whose components satisfy
 $$u_i(x)\equiv u(\Theta_i x)$$ 
 where
$$   \Theta_i:= \left(\begin{matrix}\cos {2 (i-1)\pi\over  {d} k} &\sin {2(i-1)\pi\over  {d} k} & 0\\\\
-\sin{2(i-1)\over {d} k}&\cos{2(i-1)\pi\over  {d} k}&0\\\\
0&0&  I_{(n-2)\times(n-2)}\end{matrix}\right)$$
and
$$u_1(x)\sim  \sum\limits_{\ell=1}^k  U(x-\rho\xi_\ell)\ \hbox{as}\ k\to\infty$$
with
$$\xi_\ell=\(\cos{2(\ell-1)\pi\over k},\sin{2(\ell-1)\pi\over k},0\),\ \rho\sim Rk\ln k\ \hbox{as}\ k\to\infty\ \hbox{for some}\ R>0.$$
\end{theorem}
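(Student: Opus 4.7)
The natural strategy is a Lyapunov--Schmidt finite-dimensional reduction, treating the cross-coupling $\beta u_i\sum_{j\ne i}u_j^{2}$ as a small perturbation of the decoupled scalar problem studied by Wei--Yan \cite{WY} and Peng--Wang \cite{PW}. The finite group generated by the rotation of angle $2\pi/k$ in the $(x_1,x_2)$-plane and by the reflections $x_j\mapsto-x_j$ for $j\ge 2$ acts on $H^1(\mathbb R^n)$; let $H_s$ denote the invariant subspace. Under the prescribed ansatz $u_i(x)=u(\Theta_{i}x)$ with $u\in H_s$, the system \eqref{sis} is equivalent to the single scalar equation
\[
-\Delta u+V(x)\,u=u^{3}+\beta\,u\sum_{j=2}^{d}u^{2}(\Theta_{j}\Theta_{1}^{-1}x),\qquad u\in H_s.
\]
Geometrically, the $dk$ candidate peaks $\{\rho\Theta_{i}^{-1}\xi_\ell\}$ lie on a single circle of radius $\rho$ at equally-spaced angles $2m\pi/(dk)$, the peaks of $u_i$ being every $d$-th one. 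I would set $u=W_\rho+\phi$ with $W_\rho(x)=\sum_{\ell=1}^{k}U(x-\rho\xi_\ell)$, $U$ being the unique positive radial ground state of \eqref{pblim0}, and split the problem into the perpendicular equation for $\phi$ and the bifurcation equation for $\rho$.

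For the linear theory, the linearization at $W_\rho$ inside $H_s$ is Fredholm of index zero with a one-dimensional approximate kernel spanned by $Z_\rho:=\sum_\ell\partial_\rho U(\cdot-\rho\xi_\ell)$: the tangential translation kernels are quotiented out by the rotation invariance, the normal ones by the evenness in $x_3,\ldots,x_n$. Uniform invertibility on $Z_\rho^{\perp}$ for $\rho$ in a window around $R\,k\ln k$ follows by adapting the argument of \cite{WY,PW}, the cross-coupling being a small-$\beta$ perturbation of the scalar linearization. A contraction in a weighted $L^{\infty}$-norm then produces $\phi(\rho)\in H_s\cap Z_\rho^{\perp}$ with
\[
\|\phi(\rho)\|\lesssim\frac{k}{\rho^{\nu+\epsilon}}+k\,|\beta|\,e^{-\tfrac{4\pi\rho}{dk}},
\]
the second term reflecting the size of the cross-coupling at inter-component distance $\sim 2\pi\rho/(dk)$.

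The core of the argument is the asymptotic expansion of the reduced energy $J(\rho):=\mathcal I[W_\rho+\phi(\rho)]$ evaluated on the full symmetric tuple $(u_1,\ldots,u_d)$. Computations in the spirit of Lin--Ni--Wei, extended to handle both the slowly-decaying potential and the cross-coupling, give
\[
J(\rho)=dk\,\mathcal E_0+dk\,\frac{A\,\mathfrak v_\infty}{\rho^{\nu}}-dk\,B\,e^{-\tfrac{2\pi\rho}{k}}+dk\,\beta\,C\,e^{-\tfrac{4\pi\rho}{dk}}+\mathrm{l.o.t.},
\]
with $A,B,C>0$ explicit integrals of $U$. Writing $\rho=t\,k\ln k$, the balance between the $\mathfrak v_\infty$-term and the self-interaction is the Wei--Yan balance and pins $t=R$ with $R=\nu/(2\pi)$ at leading order. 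The cross-term has size $|\beta|\,k^{1-4\pi R/d}$, which compared to the potential term of size $k^{1-\nu}$ is strictly of lower order exactly when $\nu>\tfrac{2}{d-2}$. Therefore, for $|\beta|\le\beta_k$ with $\beta_k\to 0$, the bifurcation equation $J'(\rho)=0$ is a small perturbation of its scalar version and admits a critical $\rho_k\sim R\,k\ln k$ (a minimum when $\mathfrak v_\infty,\beta>0$, a maximum when $\mathfrak v_\infty,\beta<0$), yielding the required positive solution.

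\textbf{Main obstacle.} The delicate technical point is to obtain the sharp expansion of $\beta\int U^{2}(\cdot-p)\,U^{2}(\cdot-q)$ at the short inter-component distance $|p-q|\sim(2\pi R/d)\ln k$, and, more importantly, to show that its $\rho$-derivative remains uniformly dominated by that of $\mathfrak v_\infty/\rho^{\nu}$ throughout the optimization interval. The threshold $\nu>2/(d-2)$ is precisely what keeps the cross-term a perturbation at the Wei--Yan scale; without it, the $\beta$-coupling would enter the leading bifurcation equation and both the location and the very existence of $\rho_k$ would require a genuinely different argument.
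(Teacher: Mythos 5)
There is a genuine gap, and it sits at the heart of why $d\ge 3$ is interesting at all: you have misidentified which terms balance in the bifurcation equation. You claim the Wei--Yan balance between the potential term and the self-interaction fixes $R=\nu/(2\pi)$, with the cross-coupling being a lower-order perturbation. But the inter-component peaks sit at angular gap $\tfrac{2\pi}{dk}$ whereas same-component peaks sit at angular gap $\tfrac{2\pi}{k}$; for $d\ge 3$ the inter-component peaks are \emph{closer}, so the cross-interaction $\sim \beta\, e^{-4\pi\rho/(dk)}$ decays strictly more slowly than the self-interaction $\sim e^{-2\pi\rho/k}$ and is the \emph{dominant} interaction term. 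The paper's reduced $1$D function (Lemma \ref{crho}) therefore balances the potential term $\mathfrak v_\infty k/\rho^{\nu+1}$ against the cross-term $\beta (k/\rho)^2 e^{-4\pi\rho/(dk)}\ln\ln k$, giving $R=d\nu/(4\pi)$ rather than $\nu/(2\pi)$; the self-interaction only becomes negligible with this choice, and only when $\nu>2/(d-2)$ (so that one can pick $\beta\sim k^{-b}$ with $1<b<\nu(d-2)/2$). Your version cannot work, and indeed it structurally cannot produce the case $\mathfrak v_\infty<0$, $\beta<0$ claimed in the theorem: if the equation were a perturbation of the scalar Wei--Yan balance, the self-interaction term (always negative) would force $\mathfrak v_\infty>0$. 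The very fact that the theorem allows $\mathfrak v_\infty<0$ is a signal that the sign-carrying $\beta$-term, not the $\beta$-independent self-interaction, is the one competing with the potential.

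A second, related gap: you use the bare ansatz $u=W_\rho+\phi$. With only that, the error from $\beta W_\rho\sum_{i\ge 2}W_\rho^2(\hat\Theta_i\cdot)$ is of size $e^{-(1-\alpha)2\pi\rho/(dk)}$ (Lemma \ref{inter}), so $\phi$ is of that size, and the quadratic/$\mathcal L(\phi)$ contributions to the projected equation are then $\sim k\,e^{-(1-\alpha)4\pi\rho/(dk)}$ -- the \emph{same} order as the cross-interaction one is trying to read off. That is exactly why the paper refines the ansatz to $u=W_\rho+\Psi_\rho+\Phi$, with $\Psi_\rho=\beta Y_\rho$ solving the auxiliary linear problem \eqref{refin1}: this pushes the remainder $\Phi$ down to $O(e^{-(1-\alpha)4\pi\rho/(dk)})$, and the remaining unresolvable piece $\Upsilon$ comes with an extra factor $\beta^2$, which is then tamed by taking $\beta$ small. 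Your use of the variational reduction $J(\rho)$ in place of the paper's non-variational projection is a legitimate alternative packaging, but it does not avoid either of these two issues.
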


\begin{tikzpicture}[scale=0.6]\hskip2cm
\begin{axis} [axis equal, axis lines* = center,
xtick = \empty, ytick = \empty,legend style={anchor=north},title={The case of 3 components}]
]

\addplot[only marks,mark=*] coordinates {({cos(0)},{sin(0)})};
\addplot[only marks,mark=triangle*] coordinates {({cos(30)},{sin(30)})};
\addplot[only marks,mark=diamond*] coordinates {({cos(60)},{sin(60)})};
\addplot[only marks,mark=*] coordinates {({cos(90)},{sin(90)})};
\addplot[only marks,mark=triangle*] coordinates {({cos(120)},{sin(120)})};
\addplot[only marks,mark=diamond*] coordinates {({cos(150)},{sin(150)})};
\addplot[only marks,mark=*] coordinates {({cos(180)},{sin(180)})};
\addplot[only marks,mark=triangle*] coordinates {({cos(210)},{sin(210)})};
\addplot[only marks,mark=diamond*] coordinates {({cos(240)},{sin(240)})};
\addplot[only marks,mark=*] coordinates {({cos(270)},{sin(270)})};
\addplot[only marks,mark=triangle*] coordinates {({cos(300)},{sin(300)})};
\addplot[only marks,mark=diamond*] coordinates {({cos(330)},{sin(330)})};
\addplot
[domain=0:360,variable=\t,
samples=400,smooth,loosely dashed]
({cos(t)},{sin(t)});
\addplot[domain = -1:1, restrict y to domain =-1:1, samples = 400, dotted]{(sin(30))/(cos(30))*x};
\addplot[domain = -1:1, restrict y to domain =-1:1, samples = 400, dotted]{(sin(120))/(cos(120))*x};
\addplot[domain = -1:1, restrict y to domain =-1:1, samples = 400, dotted]{(sin(60))/(cos(60))*x};
\addplot[domain = -1:1, restrict y to domain =-1:1, samples = 400, dotted]{(sin(150))/(cos(150))*x};
 
 \addplot
[domain=0:30,variable=\t,
samples=400,smooth,densely dotted]
({0.3*cos(t)},{0.3*sin(t)});
\node[label={0:{\tiny$\textcolor{black}{2\pi\over 3k}$}}] at (axis cs: {0.3*cos(20)},{0.3*sin(20)}) {};

 \addplot
[domain=0:90,variable=\t,
samples=400,smooth,densely dotted]
({0.6*cos(t)},{0.6*sin(t)});
\node[label={270:{\tiny$ {2\pi\over k}$}}] at (axis cs: {0.6*cos(55)},{0.6*sin(55)}) {};
\node[label={330:{\tiny$ {\xi_1}$}}] at (axis cs: {1},{0}) {};

\legend{$u_1$,  $u_2$,   $u_3$}
\end{axis}

\hskip5cm

\begin{axis} [axis equal, axis lines* = center,
xtick = \empty, ytick = \empty,legend style={anchor=north},title={The case of 4 components}]
]

\addplot[only marks,mark=*] coordinates {({cos(0)},{sin(0)})};
\addplot[only marks,mark=triangle*] coordinates {({cos(22.5)},{sin(22.5)})};
\addplot[only marks,mark=diamond*] coordinates {({cos(45)},{sin(45)})};
\addplot[only marks,mark=square*] coordinates {({cos(67.5)},{sin(67.5)})};
\addplot[only marks,mark=*] coordinates {({cos(90)},{sin(90)})};
\addplot[only marks,mark=triangle*] coordinates {({cos(111.5)},{sin(111.5)})};
\addplot[only marks,mark=diamond*] coordinates {({cos(135)},{sin(135)})};
\addplot[only marks,mark=square*] coordinates {({cos(157.5)},{sin(157.5)})};
\addplot[only marks,mark=*] coordinates {({cos(180)},{sin(180)})};
\addplot[only marks,mark=triangle*] coordinates {({cos(202.5)},{sin(202.5)})};
\addplot[only marks,mark=diamond*] coordinates {({cos(225)},{sin(225)})};
\addplot[only marks,mark=square*] coordinates {({cos(247.5)},{sin(247.5)})};
\addplot[only marks,mark=*] coordinates {({cos(270)},{sin(270)})};
\addplot[only marks,mark=triangle*] coordinates {({cos(292.5)},{sin(292.5)})};
\addplot[only marks,mark=diamond*] coordinates {({cos(315)},{sin(315)})};
\addplot[only marks,mark=square*] coordinates {({cos(337.5)},{sin(337.5)})};

\addplot
[domain=0:360,variable=\t,
samples=400,smooth,loosely dashed]
({cos(t)},{sin(t)});
\addplot[domain = -1:1, restrict y to domain =-1:1, samples = 400, dotted]{(sin(22.5))/(cos(22.5))*x};
\addplot[domain = -1:1, restrict y to domain =-1:1, samples = 400, dotted]{(sin(45))/(cos(45))*x};
\addplot[domain = -1:1, restrict y to domain =-1:1, samples = 400, dotted]{(sin(67.5))/(cos(67.5))*x};
\addplot[domain = -1:1, restrict y to domain =-1:1, samples = 400, dotted]{(sin(111.5))/(cos(111.5))*x};
\addplot[domain = -1:1, restrict y to domain =-1:1, samples = 400, dotted]{(sin(135))/(cos(135))*x};
 \addplot[domain = -1:1, restrict y to domain =-1:1, samples = 400, dotted]{(sin(157.5))/(cos(157.5))*x};
 
 \addplot
[domain=0:30,variable=\t,
samples=400,smooth,densely dotted]
({0.3*cos(t)},{0.3*sin(t)});
\node[label={0:{\tiny$\textcolor{black}{\pi\over 2k}$}}] at (axis cs: {0.3*cos(20)},{0.3*sin(20)}) {};

 \addplot
[domain=0:90,variable=\t,
samples=400,smooth,densely dotted]
({0.6*cos(t)},{0.6*sin(t)});
\node[label={270:{\tiny$ {2\pi\over k}$}}] at (axis cs: {0.6*cos(55)},{0.6*sin(55)}) {};
\node[label={330:{\tiny$ {\xi_1}$}}] at (axis cs: {1},{0}) {};

\legend{$u_1$,  $u_2$,   $u_3$, $u_4$}
\end{axis}
\end{tikzpicture}

The proof of our result is given in Section \ref{princi}. First, using the symmetry, we write the system \eqref{sis} as a single  non-local equation \eqref{eqrid}. Next, we build a solution whose main term is the sum of a large number  of copies of solutions to problem \eqref{pblim0} (see \eqref{sol}) whose peaks are the vertices of a regular polygon with $k$ edges at distance $\rho=\rho(k)$ from the origin.  Due to the linear coupling term, the ansatz has to be improved by adding the solution of the linear problem \eqref{refin1}. Then we perform a classical Ljapunov-Schmidt procedure to reduce the problem to that of finding a radius $\rho$ which is the zero of the one-dimensional function \eqref{finale}.
This {\em reduced 1D function}   consists of  four  main terms (see Lemma \ref{crho}). The first term $\mathfrak v_\infty \frac k{\rho^{\nu+1}}$  arises from the potential effect and its sign depends on $\mathfrak v_\infty$, the second
term (which contains $  e^{-{2\pi\rho\over k}}$) is due to the interplay between peaks of the same component and   is always negative, the third term
(which contains  $\beta    e^{-{4\pi\rho\over d k}}$) is due to the interaction
among the peaks of different components
and its sign depends on the coupling parameter $\beta$.    If $d\ge3$ the third term prevails the second one, while if $d=2$ the second term dominates the third one. 
The asymptotic expansion  of the last term $\Upsilon$ which is produced by the correction of the ansatz is  really difficult to catch and, unfortunately, we believe  its presence   is not an innocence matter, since it could give a contribution to the second and the third terms.
We are only able to provide the   rough estimate \eqref{finale00}.  That is why 
we need to choose  the coupling parameter $\beta$ small enough so that $\Upsilon$ is an higher order term in the expansion of the reduced 1D function. 
It is clear that it would be extremely interesting to find the leading term of   $\Upsilon $. At this aim it is worthwhile  to point out that such an expansion
 relies on the asymptotic decay of the solution of \eqref{refin1} and this is the key point we are not able to solve.
\\
Now, let us go back to the reduced 1D function \eqref{finale}. We observe that Peng and Wang (using a different approach)   prove that  if the system has only two components the interaction among peaks of different components  is negligible with respect to the interaction among peaks of the same component and since such an interaction  is always negative (and does not depend on $\beta$), the existence of segregated solutions is ensured as soon as the potential $V$ has a minimum at infinity, i.e. $\mathfrak v_\infty>0.$ We conjecture that if the number of components is higher the interaction among peaks of different components should prevail. In such a case
the existence of  segregated solutions  would depend on   the behaviour of the potential $V$ at $\infty$ and the sign of the coupling parameter $\beta$, namely they should exist if either in an attractive regime (i.e. $\beta>0$) $V$ has a minimum at $\infty$    or in a repulsive regime (i.e. $\beta<0$) $V$ has a maximum at $\infty$ (i.e. $\mathfrak v_\infty<0$). Actually, this  is what happens when the coupling parameter $\beta$ is small as in Theorem \ref{main}. \\

It is also true for the  more general   variational system
\beq\label{sis-2d}
 -\Delta u_i +V (x) u_i=  |u_i|^{p-1}u_i+\beta|u_i|^{p-3\over 2}u_i\sum_{j=1\atop j\neq i}^{d} |u_j|^{p-1\over 2}u_j\ \mbox{in}\,\mathbb R^n,\quad i=1, \ldots, d,\eeq 
 when $p>3$ if $n=2$ or $p\in\(3,\frac{n+2}{n-2}\)$ if $n\ge3$ (the system \eqref{sis-2d}    with $p=3$ reduces to \eqref{sis}).
In fact, in Section \ref{rela} we show the following result.
\begin{theorem}\label{main1}  
Assume  \eqref{ipo-v} and
\begin{itemize}
\item[(i)] $d>\frac{p+1}2,$  $p>5$ and either $\mathfrak v_\infty>0 $ if $\beta>0$ or $\mathfrak v_\infty<0 $ if $\beta<0$
\item[(ii)] $d\le\frac{p-1}2$   and $\mathfrak v_\infty>0 .$ 
\end{itemize}
There exists   $k_0>0$ such that for any integer $k\ge k_0$  
  the system \eqref{sis} has a  solution $(u_1, \ldots, u_d) $ whose components satisfy the properties listed in Theorem \ref{main}.
\end{theorem}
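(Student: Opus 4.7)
The proof follows the Ljapunov-Schmidt scheme used for Theorem \ref{main}: only the asymptotic bookkeeping of the reduced one-dimensional function genuinely changes, the geometry of the peaks and the overall strategy being identical. I first exploit the invariance under the cyclic action $\Theta_i$ combined with the permutation of the components to reduce the $d$-component system \eqref{sis-2d} to a single non-local scalar equation for the profile $u_1$, exactly as the corresponding step transformed \eqref{sis} into \eqref{eqrid}. I then take as ansatz $W_\rho(x)=\sum_{\ell=1}^k U(x-\rho\xi_\ell)$, where $U$ is the unique positive ground state of $-\Delta U+U=|U|^{p-1}U$ and the $\xi_\ell$'s are the vertices of a regular $k$-gon, improve it by adding a correction $\varphi_\rho$ solving a linear problem analogous to \eqref{refin1} that absorbs the non-local coupling, and linearize around $W_\rho+\varphi_\rho$. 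A contraction argument on the orthogonal complement of the one-dimensional kernel spanned by the $\rho$-derivatives of the bumps produces a small remainder $\phi_\rho$, reducing the problem to a scalar equation $F_k(\rho)=0$ of the form \eqref{finale}.

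The asymptotic analysis of $F_k(\rho)$ is the only genuinely new computation. It decomposes into the same four pieces as in Lemma \ref{crho}: the potential contribution, of order $\mathfrak v_\infty k\rho^{-(\nu+1)}$; a self-interaction term, always negative, with exponential factor $e^{-2\pi\rho/k}$ coming from integrals of type $\int U^p(x-\rho\xi_1)U(x-\rho\xi_2)\,dx$; a cross-interaction term, whose sign depends on $\beta$, now with exponential factor $e^{-(p+1)\pi\rho/(dk)}$, since the coupling nonlinearity $\beta|u_i|^{(p-3)/2}u_i|u_j|^{(p-1)/2}u_j$ produces dominating projection integrals of the form $\int U^{(p+1)/2}(x-\rho\xi_1)U^{(p+1)/2}(x-\rho\Theta_2\xi_1)\,dx$ between peaks of different components at distance $2\pi\rho/(dk)$; and the unresolved correction $\Upsilon$, controlled only by the crude bound \eqref{finale00}.

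The two cases of the statement correspond to which interaction dominates. In case (ii), the hypothesis $d\le(p-1)/2$ yields $(p+1)\pi/(dk)\ge 2\pi/k$, so the self-interaction dominates the cross-interaction; being always negative it can be balanced against the potential term precisely when $\mathfrak v_\infty>0$, regardless of the sign of $\beta$. In case (i), the hypothesis $d>(p+1)/2$ reverses the inequality and the cross-interaction dominates the self one; a sign change of $F_k$ on the relevant interval is obtained only when the potential and the cross-interaction contribute with opposite signs, which translates exactly into the compatibility condition that $\mathfrak v_\infty$ and $\beta$ have the same sign. In either regime an intermediate-value argument applied to $F_k$ on an interval of radii of order $k\ln k$ provides the required $\rho_k\sim Rk\ln k$, and hence the desired solution of \eqref{sis-2d}.

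The main obstacle, as already flagged for Theorem \ref{main}, is the inability to extract the leading order of $\Upsilon$: one must therefore work in a regime where $\Upsilon$ is genuinely subdominant with respect to the three other identified terms. This is also where the additional restriction $p>5$ in case (i) enters, since it is what keeps the crude bound \eqref{finale00} negligible compared to the $p$-dependent cross-interaction in the range of $\rho$ where the potential term and the cross-interaction balance; for smaller values of $p$ the comparison fails and a finer analysis of the refined ansatz \eqref{refin1} would be needed.
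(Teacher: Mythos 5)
Your outline of the reduction and the bookkeeping of the three leading interaction terms is correct, but there is a genuine structural error: you carry over the refined ansatz $W_\rho+\varphi_\rho$ and the ``unresolved'' correction $\Upsilon$ from the proof of Theorem~\ref{main}, whereas the paper's proof of Theorem~\ref{main1} explicitly does \emph{not} refine the ansatz. In Section~\ref{rela} one works with the bare ansatz $u=W_\rho+\Phi$, the linear operator is just $\mathcal L(\Phi)=-\Delta\Phi+\Phi-pW_\rho^{p-1}\Phi$ (no non-local terms), and the full coupling $\beta W_\rho^q\sum_{i\ge2}W_\rho^r(\hat\Theta_i x)$ sits directly in the error $\mathcal E$, where its projection onto $\partial_\rho W_\rho$ can be computed \emph{exactly} via Lemma~\ref{ruiz}--\ref{new}. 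There is consequently no $\Upsilon$-term and no need to invoke the crude bound \eqref{finale00}. The only remainder is the quantity $\Xi_k(\rho)$, which is controlled by $\|\Phi\|_*$ alone, i.e. by the standard Lyapunov--Schmidt estimate \eqref{resto2}. This distinction matters for the statement itself: because the cross-interaction term is identified exactly rather than merely bounded, Theorem~\ref{main1} holds with \emph{no} smallness restriction on $\beta$, whereas your scheme, which keeps an unresolved $\Upsilon$ dominated only after taking $\beta$ small (as in \eqref{betak0}), would yield a strictly weaker conclusion inconsistent with what is claimed.

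The condition $p>5$ in case~(i) is likewise misattributed. It does not arise from comparing $\Upsilon$ against the cross-interaction, but from making $\Xi_k(\rho)$ subdominant: with $q=\tfrac{p-1}{2}$, $r=\tfrac{p+1}{2}$ and $d>\tfrac{p+1}{2}$, one has $\sigma\approx\tfrac{p-1}{2d}$ and $\tau\approx\tfrac{p-3}{2d}$ in \eqref{resto2} and in the definition of $\Xi_k$, and the requirements $2\sigma>\tfrac{p+1}{2d}$ and $\sigma+\tau>\tfrac{p+1}{2d}$ reduce to $p>3$ and $p>5$ respectively. Your concluding remark that ``a finer analysis of the refined ansatz \eqref{refin1} would be needed'' for smaller $p$ is therefore beside the point here, since \eqref{refin1} plays no role in this section; what would be needed is a sharper estimate on $\Phi$. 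Finally, for case~(ii) you correctly observe that the self-interaction dominates and that the conclusion is $\beta$-independent, which also indicates that the coefficient $\beta$ appearing in front of $\mathtt A_2$ in \eqref{ckr2} is a typographical slip in the paper rather than an actual $\beta$-dependence.
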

The situation in this case is easier, because the correction of the ansatz is not needed and the reduced 1D function reads as \eqref{ckr}
in case (i) or \eqref{ckr2} in case (ii). To conclude, we highlight the fact that if we were able to approximate  the 1D function in Lemma \ref{crho}  as \eqref{ckr} or \eqref{ckr2}, then we would have the existence  of solutions of system \eqref{sis} without any assumptions on the smallness of the interspecies force $\beta.$
\\

The paper is organized as follows. Section \ref{princi} and Section \ref{rela} contain  the proof of Theorem \ref{main} and Theorem \ref{main1},
respectively. Appendix \ref{app1} contains some ausiliary results. Appendix \ref{app2} contains the proof of  Proposition \ref{inv}.\\

{\em Notation.} In what follows we agree that $f\lesssim g$  means $|f|\le c |g|(1+o(1))$ for some positive constant $c$ indipendent from $k$ and $f\sim g$  means $f= g(1+o(1))$.

\section{Proof of Theorem \ref{main}} \label{princi}

 \subsection{Reducing the system to a non-local equation}\label{1}
First of all, we introduce some symmetries which allow to  reduce the system \eqref{sis} to a non-local equation.

Given $\theta\in[0,2\pi]$, let  $\Theta_\theta:\mathbb R^n\to\mathbb R^n$ be the rotation of an angle $\theta$ in the first two components, i.e.
\begin{equation}\label{roto}\Theta_\theta:= \left(\begin{matrix}\cos \theta &\sin \theta & 0\\\\
-\sin\theta&\cos\theta&0\\\\
0&0&  I_{n-2\times n-2}\end{matrix}\right).\end{equation}

Now, let $k\ge 2$ and set  \begin{equation}\label{rotoi}\hat\Theta_i :=\Theta_{ {2\pi\over d k}(i-1)},\ \hbox{ for any } \ i=1, \ldots, d.\eeq Note that $\hat\Theta_1=I_{n \times n}.$\\

  We look for a solution to \eqref{sis}
 as $\mf u=(u_1, \ldots, u_d)\in (H^1(\mathbb R^n))^d$ whose components satisfy
\begin{equation}\label{comps}u_i(x)\equiv u(\hat\Theta_i x)\ \hbox{for any}\ i=1,\dots,d.\end{equation} 
It is immediate to check that such a $\mf u$  solves the system \eqref{sis} if and only if
  $u$ solves the non-local  equation
\beq\label{eqrid}
-\Delta u+V(x)u=u^3+\beta u  \sum_{j=2}^d u^2(\hat\Theta_j x), \quad\mbox{in}\,\, \mathbb R^n.\eeq

\subsection{The ansatz for the non-local equation}\label{2}
We look for a solution to \eqref{eqrid} in the space
\begin{equation}\label{acca} \mathscr H:=\left\{u\in W^{1,2}(\mathbb R^n)\,\,:\,\, u\,\hbox{satisfies} \,\eqref{pari}\,\hbox{and}\,\eqref{rot}\right\},\end{equation}
i.e.  
\beq\label{pari} u(x_1,\dots,x_i,\dots)=u(x_1,\dots,-x_i,\dots)\ \hbox{for any}\ i=2,\dots,n
\eeq and  
\beq\label{rot} u(x)=u\(\Theta_{ {2\pi\over   k}(h-1)} x\)\ \hbox{for any}\  h=1, \ldots, k\ \hbox{(see \eqref{roto})}.\eeq
We are going to find a solution of \eqref{eqrid} whose main term looks like
 \beq\label{sol}W_\rho(x):=\sum_{h=1}^k U_h(x) \ \hbox{and}\   U_h(x):=U(x-\rho\xi_h )\ \hbox{solves \eqref{pblim0}},\eeq
where the peaks satisfy 
\begin{equation}\label{points}
\xi_h :=\Theta_{ {2\pi\over   k}(h-1)}\xi_1,\  h=1,\ldots, k,\ \hbox{with}\ \xi_1 :=(1,0,0)\in \mathbb R^2\times\mathbb R^{n-2},\   
\end{equation}
and 
\begin{equation}\label{raggio}\rho\in \mathcal D_k:=\left[r_1k\ln k, r_2k\ln k\right]\ \hbox{for some}\ r_2>r_1>0. \end{equation}
It is useful to remind that  $U$  is the unique positive radial solution to \eqref{pblim0} and  decays exponentially together with its radial derivatives $U'$, i.e.
\begin{equation}\label{groundstate}
 \lim_{r\to+\infty} r^{\frac{n-1}{2}}e^{r} U (r)=\mathfrak u >0\ \hbox{and}\ \lim_{r\to+\infty}\frac{U'(r)}{U(r)}=-1,
\end{equation}
where $\mathfrak u$ is a positive constant depending on $n.$

It turns out that the error which comes from the interaction term 
$$ \beta W_\rho(x)\sum_{i=2}^d W_\rho^2(\hat\Theta_i x)$$
is too large and we need to refine the ansatz adding the function $\Psi_\rho=\beta Y_\rho$ where $Y_\rho\in\mathscr H$ solves
\begin{equation}\label{refin1}\begin{aligned}
\mathcal L(Y_\rho)= W_\rho \sum_{i=2}^d W^2_\rho(\hat\Theta_i\cdot)-\gamma_\rho\partial_\rho W_\rho\ \hbox{in}\ \mathbb R^n\end{aligned}
\end{equation}
where the linear operator $\mathcal L$ is defined in \eqref{lin} and
\begin{equation}\label{refin2}
\gamma_\rho:= \frac{\int\limits_{\mathbb R^n} W_\rho(x)\sum_{i=2}^d W^2_\rho(\hat\Theta_i(x)) \partial_\rho W_\rho(x)dx}{\int\limits_{\mathbb R^n}\(\partial_\rho W_\rho(x)\)^2dx}.
\end{equation}
The existence of $\Psi_\rho$
follows by Proposition \eqref{inv}.
\\

Then we will build a solution to   \eqref{eqrid} as
\beq\label{ansatz}u=W_\rho+  \Psi_\rho+\Phi,\ \hbox{with $W_\rho$ as in \eqref{sol} and $\Psi_\rho$ as in \eqref{refin1}}.\eeq
Moreover the higher order term $\Phi\in \mathscr H $   satisfies the orthogonality condition 
\beq\label{orto}\int\limits_{\mathbb R^n} \partial_\rho W_\rho \Phi=0.
\eeq

\subsection{Rewriting the single equation via the finite dimensional reduction method}\label{3}

It is useful to rewrite   problem \eqref{eqrid} in terms of $\Phi$, i.e.
\beq\label{pbri}\mathcal L(\Phi)=\mathcal N(\Phi)+\mathcal E\ \hbox{in}\ \mathbb R^n,\eeq
where 
the linear operator $\mathcal L$ is defined by
\beq\label{lin}
\mathcal L(\Phi)=-\Delta\Phi+V(x)\Phi-3W_\rho^2\Phi   {-\beta \Phi\sum_{i=2}^d W^2_\rho(\hat\Theta_i x)-2\beta W_\rho\sum_{i=2}^d W _\rho(\hat\Theta_i x)\Phi(\hat\Theta_i x)},\eeq
the error term $\mathcal E$ is defined by
\beq\label{err}\begin{aligned}
\mathcal E:=&(1-V(x)) W_\rho  \\ &+\Delta W_\rho-W_\rho+W_\rho^3    \underbrace{-\mathcal L (\Psi_\rho)+ \beta W_\rho\sum_{i=2}^d W_\rho^2(\hat\Theta_i x)}_{=\gamma_\rho\partial_\rho W_\rho\ \hbox{(see \eqref{refin2})}}+ \Psi_\rho^3+3 W_\rho\Psi_\rho^2\\
&+ \beta \Psi_\rho\sum_{i=2}^d  2W_\rho(\hat\Theta_i x) \Psi_\rho(\hat\Theta_i x) +\beta \Psi_\rho\sum_{i=2}^d \Psi^2_\rho(\hat\Theta_i x) 
+\beta W_\rho\sum_{i=2}^d    \Psi^2_\rho(\hat\Theta_i x)
\end{aligned}\eeq
and the higher order term $\mathcal N(\Phi)$ is defined by
\beq\label{qua}\begin{aligned}
\mathcal N(\Phi)&:=  \Phi^3+3W_\rho\Phi^2+3\Psi_\rho\Phi^2+3\Psi_\rho^2\Phi+6 W_\rho\Psi_\rho\Phi\\
&+\beta \Phi\sum_{i=2}^d \left[2W_\rho(\hat\Theta_i x)\(\Psi_\rho (\hat\Theta_i x)+\Phi (\hat\Theta_i x)\)+ \(\Psi_\rho (\hat\Theta_i x)+\Phi (\hat\Theta_i x)\)^2\right]\\&+\beta \Psi_\rho\sum_{i=2}^d 2\Phi(\hat\Theta_i x)\(W_\rho(\hat\Theta_i x)+ \Psi_\rho (\hat\Theta_i x)\)+{\beta \Psi_\rho\sum_{i=2}^d \Phi^2(\hat\Theta_i x)}
\\
&+\beta W_\rho\sum_{i=2}^d \Phi^2(\hat\Theta_i x)+2\beta W_\rho\sum_{i=2}^d  \Psi_\rho (\hat\Theta_i x)\Phi(\hat\Theta_i x).\end{aligned}\eeq

In order to solve \eqref{pbri} we use the classical  Lyapunov-Schmidt procedure:
\begin{itemize}
\item[(i)] first,  given   $\rho\in\mathcal D_k$ (see \eqref{raggio}) we find a function $\Phi\in \mathscr H$   such that, for a certain $\mathfrak c \in\mathbb R$, it solves the {\em intermediate nonlinear problem}
\beq\label{pbint}
 \mathcal L(\Phi)=\mathcal N(\Phi)+\mathcal E+\mathfrak c \partial_\rho W_\rho\ \hbox{in}\ \mathbb R^n\ \hbox{and}\   \int\limits_{\mathbb R^n} \partial_\rho W_\rho \Phi=0.\eeq 
\item[(ii)] next, we   find $\rho\in\mathcal D_k$ (see \eqref{raggio}) so that $\mathfrak c$ in \eqref{pbint} is equal to zero.\end{itemize}

 \subsection{The linear theory}\label{4}
 We will find the solution $\Phi$ to \eqref{pbint} in a suitable   Banach space where    the linear operator $\mathcal L$   defined in \eqref{lin}
 is invertible.\\
 
 We introduce the Banach space
 (see also \cite[Section 3]{mpw})
 $$\mathscr B_{*}:=\{h\in L^\infty(\mathbb R^n)\,\,:\,\, \|h\|_{*}<+\infty\},\ 
\|h\|_{*}:=\sup_{x\in\mathbb R^n}\left(\sum_{i=1}^d\sum_{j=1}^k e^{-\alpha|x-\rho\eta_{ij}|}\right)^{-1}|h(x)|.$$ 
for some $\alpha\in(0, 1)$. The points
  $\eta_{i\ell}:=\hat\Theta_i^{-1}\xi_\ell $ are  the peaks of the bubble $ U\(\hat\Theta_i \cdot-\rho\xi_\ell\),$ namely
$ U\(\hat\Theta_i x-\rho\xi_\ell\)= U\(x-\rho\hat\Theta_i^{-1}\xi_\ell\)$.\\
It is worthwhile to point out that all the peaks $\eta_{ij}$'s are different among them.
 \begin{lemma}\label{dist-punti}
It holds true 
\begin{equation}\label{min1}\min\limits_{h=2,\dots,k}\left|\xi_1 - \xi_h\right|=\left|\xi_1 - \xi_2 \right|= 2\sin{\pi\over  k}\end{equation}
and
\begin{equation}\label{min2}\min\limits_{i=2,\dots,d\atop h,\ell=1,\dots,k}\left|\xi_h -\eta_{i\ell} \right|= \left|\xi_1 -\eta_{21} \right|=2\sin{\pi\over dk}.\end{equation}
\end{lemma}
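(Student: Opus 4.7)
The plan is to reduce both assertions to an elementary trigonometric computation on the unit circle of the $(x_1,x_2)$-plane. By construction every $\xi_h$ and every $\eta_{i\ell}$ is a unit vector whose last $n-2$ coordinates vanish and whose first two coordinates are $(\cos\alpha,\sin\alpha)$ for an angle $\alpha\in\mathbb R/2\pi\mathbb Z$ that can be read off from the rotations $\Theta_\theta$ and $\hat\Theta_i$. Consequently any pairwise distance collapses to the chord formula $2\bigl|\sin\tfrac{\alpha-\beta}{2}\bigr|$, where $\alpha,\beta$ are the two corresponding angles.

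For \eqref{min1} the angle of $\xi_h$ is $\theta_h=\frac{2\pi(h-1)}{k}$, so
\[ |\xi_1-\xi_h|=2\left|\sin\frac{\pi(h-1)}{k}\right|. \]
As $h$ runs through $\{2,\dots,k\}$ the integer $h-1$ runs through $\{1,\dots,k-1\}$, and the minimum of $|\sin(\pi m/k)|$ on this set is attained at $m=1$ and $m=k-1$ with common value $\sin(\pi/k)$, which yields \eqref{min1}.

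For \eqref{min2} I will use that $\hat\Theta_i$ rotates the $(x_1,x_2)$-plane by angle $\frac{2\pi(i-1)}{dk}$, so $\eta_{i\ell}=\hat\Theta_i^{-1}\xi_\ell$ has angle $\psi_{i\ell}=\frac{2\pi(\ell-1)}{k}-\frac{2\pi(i-1)}{dk}$. Hence
\[ \theta_h-\psi_{i\ell}=\frac{2\pi}{dk}\bigl[d(h-\ell)+(i-1)\bigr]=:\frac{2\pi s}{dk}, \]
so $|\xi_h-\eta_{i\ell}|=2\bigl|\sin\frac{\pi s}{dk}\bigr|$, where $s$ is taken modulo $dk$ in $[-dk/2,dk/2]$. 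The key observation is that for $i\in\{2,\dots,d\}$ one has $i-1\in\{1,\dots,d-1\}$, hence $s\equiv i-1\not\equiv 0\pmod d$ and in particular $s\not\equiv 0\pmod{dk}$. Therefore $|s|\ge 1$, with equality realized by choosing $h=\ell$ and $i=2$, which gives $s=1$. The corresponding minimum chord distance is $2\sin(\pi/(dk))$, attained at the pair $(h,\ell,i)=(1,1,2)$, that is at $|\xi_1-\eta_{21}|$, establishing \eqref{min2}.

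There is no real obstacle here; the only mildly delicate point is to verify that, as $h-\ell$ ranges in $\{-(k-1),\dots,k-1\}$ and $i-1$ ranges in $\{1,\dots,d-1\}$, the resulting values of $s$ cover every integer in $[-dk/2,dk/2]$ which is not a multiple of $d$, so that the lower bound $|s|\ge 1$ is genuinely sharp. This is immediate from the construction and requires no further argument.
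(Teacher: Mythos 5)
Your proposal is correct and follows essentially the same route as the paper: both reduce each distance to the chord formula $2|\sin(\Delta/2)|$ on the unit circle and then analyze the angular separation, in particular observing that the angle between $\xi_h$ and $\eta_{i\ell}$ is an integer multiple $s$ of $2\pi/(dk)$ with $s\equiv i-1\not\equiv 0\pmod d$, so $|s|\ge 1$ with equality realized at $(h,\ell,i)=(1,1,2)$. Your modular-arithmetic phrasing is if anything a bit more explicit than the paper's brief argument, but the idea is identical; your closing remark about $s$ ``covering every integer not divisible by $d$'' is stronger than needed (only achievability of $s=\pm1$ and the lower bound $|s|\ge1$ matter) and can be dropped.
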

\begin{proof}
\eqref{min1} is immediate. Let us check \eqref{min2}.
By \eqref{rotoi} and \eqref{rot}
$$\eta_{i\ell} =\Theta_i^{-1}\xi_\ell = \Theta_{\frac{2\pi}{k}(\ell-1)-\frac{2\pi}{dk}(i-1)}\xi_1\ \hbox{and}\ \xi_h=\Theta_{\frac{2\pi}{k}(h-1)}\xi_1$$
and they coincide if and only if
 $$\frac{2\pi}{dk}(i-1)= \frac{2\pi}{k}(\ell-h)\ \iff\  i= d(\ell-h)+1\ \iff\ i=1 \ \hbox{and}\ \ell=h. $$ 
Moreover, when $i\ge2$ the distance $|\xi_h -\eta_{i\ell}|$ is minimal when the angle $$\frac{2\pi}{dk}(i-1)-\frac{2\pi}{k}(\ell-h)=\frac{2\pi}{k}\(\frac 1 d (i-1)-(\ell-h)\)$$ is minimal, that is if  $\ell=h$ and $i=2$ and in this case $$ |\xi_h -\eta_{ih}|= |\xi_1 -\eta_{21}|=2 \sin \frac{\pi}{d k} .$$
\end{proof}
 
It is also  useful to remark that
\beq\label{normastella}\|h\|_{L^\infty}{\le C} \|h\|_{*}\quad \mbox{and}\quad \|h\|_{L^q}{\le C}\|h\|_{*},\quad \forall\,\, 1\le q<\infty.\eeq   
Actually,  the linear operator $\mathcal L$   defined in \eqref{lin}
 is invertible in $\mathscr B_{*}$ as shown in the following proposition whose proof can be obtained arguing as in  \cite[Section 3]{dwy} and   is postponed in the Appendix \ref{app2}.

\begin{proposition}\label{inv}
{For any compact set $B\subset \mathfrak B$  (see Remark \ref{wep}), there exists $k_0>0$ such that for any $\beta\in B$, for any $k\ge k_0$,  $\rho\in\mathcal D_k$ and   $ h \in \mathscr B_{*}$ which satisfy \eqref{pari} and \eqref{rot},} the linear problem 
$$  \mathcal L(\Phi)= h +\mathfrak c \partial_\rho W_\rho\ \hbox{in}\  \mathbb R^n\ \hbox{and}\ \int\limits_{\mathbb R^n} \partial_\rho W_\rho \Phi=0$$
admits a unique solution $\Phi=\Phi(\rho,k)\in\mathscr H\cap\mathscr B_{*}$ and $\mathfrak c=\mathfrak c(\rho,k)\in\mathbb R$ such that
\beq\label{norma}
\|\Phi\|_{*}\lesssim\| h \|_{*}\quad\mbox{and}\quad |\mathfrak c|\lesssim\| h \|_{*}.\eeq
\end{proposition}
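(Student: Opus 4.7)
The plan is a contradiction and compactness argument, which is standard for Lyapunov--Schmidt linear theory with many concentration points. Suppose \eqref{norma} fails: there exist $k_n\to\infty$, $\beta_n\in B$, $\rho_n\in\mathcal D_{k_n}$, $h_n\in\mathscr B_*$ with $\|h_n\|_*\to 0$, together with $\Phi_n\in\mathscr H\cap\mathscr B_*$ and $\mathfrak c_n\in\mathbb R$ solving the linear system with $\|\Phi_n\|_*=1$. First I would show $|\mathfrak c_n|=o(1)$ by testing the equation against $\partial_\rho W_{\rho_n}$ and using the orthogonality condition on $\Phi_n$: the coefficient in front of $\mathfrak c_n$ is $\|\partial_\rho W_{\rho_n}\|_{L^2}^2\sim k_n\,\|\partial_\rho U\|_{L^2}^2$, while the right-hand side contributes $\bigl|\int \Phi_n\,\mathcal L(\partial_\rho W_{\rho_n})\bigr|+\bigl|\int h_n\,\partial_\rho W_{\rho_n}\bigr|=o(k_n)$, because $\partial_\rho W_{\rho_n}$ is an approximate kernel of $\mathcal L$ (it kernels the linearization at a single bubble exactly) and $\Phi_n$ is uniformly bounded in $L^q$ via \eqref{normastella}.

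Next I would perform a local analysis at each peak. For every $\ell=1,\dots,k_n$ set $\tilde\Phi_n^\ell(y):=\Phi_n(y+\rho_n\xi_\ell)$; these are uniformly bounded on every fixed compact set. By Lemma \ref{dist-punti} the separation between $\rho_n\xi_\ell$ and any other peak $\rho_n\eta_{ij}$ diverges like $(2\pi/d)\ln k_n$, so on each fixed ball $W_{\rho_n}(y+\rho_n\xi_\ell)\to U(y)$, while $W_{\rho_n}(\hat\Theta_i(y+\rho_n\xi_\ell))$ and $\Phi_n(\hat\Theta_i(y+\rho_n\xi_\ell))$ tend to $0$ uniformly for $i\ge 2$. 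By elliptic regularity $\tilde\Phi_n^\ell\to\tilde\Phi^\ell$ in $C^1_{\mathrm{loc}}$ and $\tilde\Phi^\ell$ satisfies
\[-\Delta\tilde\Phi^\ell+\tilde\Phi^\ell-3U^2\tilde\Phi^\ell=0\quad\text{in }\mathbb R^n.\]
Nondegeneracy of the ground state gives $\tilde\Phi^\ell\in\mathrm{span}\{\partial_1 U,\dots,\partial_n U\}$. The mirror symmetries \eqref{pari} and the rotational invariance \eqref{rot}, transported to the $\ell$-th peak via $\Theta_{2\pi(\ell-1)/k_n}$, kill every coordinate derivative except the radial one $\partial_r U$, and the orthogonality in \eqref{pbint} localizes to $\int\partial_r U\,\tilde\Phi^\ell=0$, forcing $\tilde\Phi^\ell\equiv 0$ for every $\ell$.

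Finally, I would upgrade this local vanishing to $\|\Phi_n\|_*\to 0$ by a barrier. For $\alpha\in(0,1)$, the function $Z_n(x):=\sum_{i,j}e^{-\alpha|x-\rho_n\eta_{ij}|}$ is a supersolution of the diagonal part $-\Delta+V-3W_{\rho_n}^2$ outside fixed balls around the peaks (thanks to \eqref{ipo-v} and to the exponential decay of $W_{\rho_n}$ there), so a comparison argument combined with the $C^1_{\mathrm{loc}}$ convergence near each peak yields $\|\Phi_n\|_*\le o(1)+O(\|h_n\|_*)+O(|\mathfrak c_n|)=o(1)$, contradicting $\|\Phi_n\|_*=1$. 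The main obstacle is the nonlocal coupling $-2\beta W_{\rho_n}\sum_i W_{\rho_n}(\hat\Theta_i\cdot)\,\Phi_n(\hat\Theta_i\cdot)$, which does not fit a pointwise maximum principle: the only way to absorb it into the barrier step is to exploit that the supports of $W_{\rho_n}$ and $W_{\rho_n}\circ\hat\Theta_i$ sit at distance $\sim(2\pi/d)\ln k_n$, making this contribution pointwise $O(|\beta|\,k_n^{-\sigma(d,\alpha)})\,Z_n(x)$; keeping the exponent $\sigma(d,\alpha)$ strictly positive and compatible with the weight $\alpha$ in $\|\cdot\|_*$, uniformly for $\beta$ in the compact set $B$, is the delicate core of the proof.
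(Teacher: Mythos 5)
Your overall architecture (contradiction, estimate on $\mathfrak c_n$, blow-up at peaks, barrier to upgrade local vanishing to a global bound) matches the paper's, but the blow-up step is incomplete in a way that hides exactly where the hypothesis $\beta\in\mathfrak B$ (Remark \ref{wep}) is used — a clear signal that a case is missing.

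The barrier/maximum-principle step controls $\|\Phi_n\|_*$ by the supremum of $|\Phi_n|$ on small spheres around \emph{all} the weight points $\rho_n\eta_{ij}$, $i=1,\dots,d$, $j=1,\dots,k_n$, not only around the peaks $\rho_n\xi_\ell=\rho_n\eta_{1\ell}$ of $W_\rho$. You blow up only at $\rho_n\xi_\ell$ and correctly obtain the limit equation $-\Delta\tilde\Phi+\tilde\Phi-3U^2\tilde\Phi=0$, concluding $\tilde\Phi\equiv0$ by nondegeneracy plus the localized orthogonality. But the contradiction could come from a point $\rho_n\eta_{ij}$ with $i\ge2$, and there the coefficients of $\mathcal L$ behave very differently: $W_{\rho_n}\to0$ on compacts around that point (so the term $3W_\rho^2\Phi$ disappears, as does the whole nonlocal coupling because it carries a factor $W_\rho$), while $\sum_{i'=2}^d W^2_{\rho_n}(\hat\Theta_{i'}\cdot)$ concentrates to $U^2$ (since $\hat\Theta_i\eta_{ij}=\xi_j$). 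The blow-up limit then solves $-\Delta\Phi_\infty+\Phi_\infty-\beta U^2\Phi_\infty=0$, where nondegeneracy of the cubic ground state gives nothing, the orthogonality condition $\int\partial_\rho W_\rho\Phi=0$ localizes only near the $\xi_\ell$'s and gives nothing either, and what you actually need is $\beta\notin\{\Lambda_\kappa\}$ so that the only bounded solution is zero. Without this case your argument would "prove" the proposition for every $\beta$, which is false: e.g.\ $\beta=\Lambda_2=3$ produces a nontrivial kernel concentrated near the $\eta_{ij}$ with $i\ge2$. This two-case alternative (limit equation with coefficient $3$ or with coefficient $\beta$) is the crux of the paper's proof.

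A smaller remark on the nonlocal term: the paper does not try to absorb $-2\beta W_\rho\sum_{i}W_\rho(\hat\Theta_i\cdot)\Phi(\hat\Theta_i\cdot)$ into a pointwise barrier. It writes $\mathcal L=\mathcal L_0+\mathcal L_1$ with $\mathcal L_1$ the nonlocal part, shows $\|\mathcal L_1(\Phi)\|_*\lesssim e^{-(1-\alpha)2\pi\rho/(dk)}\|\Phi\|_*$ via \eqref{inter2}, moves $\mathcal L_1(\Phi)$ to the right-hand side, and runs the barrier argument only for the local operator $\mathcal L_0$. That sidesteps your "delicate core" cleanly. You also do not address existence/uniqueness at all; the paper handles it through $\mathscr I^*$, compactness of the auxiliary operator, and the Fredholm alternative, which is routine once the a priori bound is in place but should be stated.
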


\begin{rem}\label{wep} Let $\Lambda_\kappa$ be the sequence of eigenvalues of the problem
$$-\Delta\varphi+\varphi=\Lambda_\kappa U^2 \varphi\ \hbox{in}\ \mathbb R^n.$$ 
It is well known that $\Lambda_1=1$ is simple and the associated eigenfunction is $ U.$ The second  eigenvalue $\Lambda_2$ has multiplicity $n$ and the   associated eigenfunctions  are $ \partial_{x_i}U$, $i=1,\dots,n.$ 
{Set $\mathfrak B:=\mathbb R\setminus\{\Lambda_\kappa,\ \kappa\in\mathbb N\}.$}\end {rem}

\subsection{The weighted norm}\label{5-}

We introduce the sector
$$\Sigma:=\left\{(r\cos\theta,r\sin\theta,x_3)\ :\ x_3\in\mathbb R,\ r\ge0,\ \theta\in\left[-\frac\pi k,\frac\pi k\right]\right\}$$  
so that if  $u$ satisfies   \eqref{rot}
then
 $$\begin{aligned}\|u\|_{*}& =\sup_{x\in\Sigma}\left(\sum_{j=1}^k\sum_{i=1}^d e^{-\alpha|x-\rho\eta_{ij}|}\right)^{-1}|u(x)|.\end{aligned}$$
Moreover,  it is useful to decompose $\Sigma$
into $d$ sectors (if $d$ is odd $i\in I:=\{-\frac{d-1}2,\dots,0,\dots,\frac{d-1}2\}$) or  in $d+1$ sectors (if $d$ is even and $i\in I:=\{-\frac d2,\dots,0,\dots,\frac d2\}$)
$$\widetilde \Sigma_i:=\left\{(r\cos\theta,r\sin\theta,x_3)\in\Sigma\ :\ x_3\in\mathbb R,\ r\ge0,\ \theta\in\left[-\frac\pi {dk}+\frac{2\pi}{dk}i,\frac\pi {dk}+\frac{2\pi}{dk}i\right]\right\},\ i\in I,$$
so that each sector $\widetilde \Sigma_i$ contains only one point $\eta_{\ell\kappa}$ which will be denoted by $\tilde\eta_i$. In particular $\tilde \eta_0=\xi_1.$ 
Therefore, to estimate the weighted norm $\|u\|_{*}$ it will be enough to select the points $\tilde\eta_i$ which belong to the sector $\Sigma$, i.e. 
$$\begin{aligned}\|u\|_{*}& =\sup_{x\in\Sigma}\left(\sum_{j=1}^k\sum_{i=1}^d e^{-\alpha|x-\rho\eta_{ij}|}\right)^{-1}|u(x)| \le \sup_{x\in\Sigma}    \left(\sum_{i\in  I} e^{-\alpha|x-\rho\tilde\eta_{i}|}\right)^{-1} |u(x)|.\end{aligned}$$
Moreover, since
\begin{equation}\label{si}x\in\Sigma\ \Rightarrow\ |x-\rho\xi_h|\ge\frac\rho2|\xi_h-\xi_1|\ \hbox{if}\ h\ge2\end{equation}
and
\begin{equation}\label{sii}x\in\widetilde\Sigma_i\ \Rightarrow\ |x-\rho\tilde\eta_{\ell}|\ge\frac\rho2|\tilde\eta_{\ell}-\tilde\eta_{i}|\ \hbox{if}\ \ell\not=i,\end{equation}
  in the sector $\Sigma$ the main term of $W_\rho(x)=\sum _{j=1}^k U(x-\rho\xi_j)$ is the first bubble $U(x-\rho\xi_1)$ whose peak lies in $\Sigma$, while in each subsector $ \widetilde\Sigma_\ell$ the main term of $W_\rho(\Theta_i x)=\sum _{j=1}^k U(x-\rho\Theta^{-1}_i\xi_j)$ is the bubble
$U(x-\rho\tilde\eta_\ell)$ whose peak $\eta_{ij}=\Theta^{-1}_i\xi_j=\tilde\eta_\ell$ is the unique which belongs to $ \widetilde\Sigma_\ell.$
This can be made rigorous in the following lemma (whose proof can be found in   \cite[Lemma A.1]{WY}).
\begin{lemma}
\label{chiave}
For any $\sigma\in (0,1)$ 
for any $j\ge2$
$$U(x-\rho\xi_j)\lesssim e^{-\sigma\frac{\pi\rho}k}e^{-(1-\sigma)|x-\rho\xi_1|}\ \hbox{for any}\ x\in\Sigma$$
and for any $\eta_{ij}\not\in  \widetilde\Sigma_\ell$
$$U(x-\rho\eta_{ij})\lesssim e^{-\sigma\frac{\pi\rho}{dk}}e^{-(1-\sigma)|x-\rho\tilde\eta_\ell|}\ \hbox{for any}\ x\in \widetilde\Sigma_\ell.$$
\end{lemma}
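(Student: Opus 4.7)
\textbf{Proof plan for Lemma \ref{chiave}.}

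The plan is to reduce both assertions to the sharp exponential decay of $U$ combined with the Voronoi structure of the sectors $\Sigma$ and $\widetilde{\Sigma}_\ell$. From \eqref{groundstate} one has $U(y)\lesssim e^{-|y|}$ uniformly, so both inequalities will follow once I establish
\[
|x-\rho\xi_j|\ge \sigma\tfrac{\pi\rho}{k}(1+o(1))+(1-\sigma)|x-\rho\xi_1|\qquad \text{for } x\in\Sigma,\ j\ge 2,
\]
and the analogous lower bound in each $\widetilde{\Sigma}_\ell$, with $k$ replaced by $dk$.

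For the first inequality I split $|x-\rho\xi_j|=\sigma|x-\rho\xi_j|+(1-\sigma)|x-\rho\xi_j|$ and bound each piece separately. For the $\sigma$-piece, \eqref{si} combined with the sharp minimum distance $|\xi_j-\xi_1|\ge|\xi_2-\xi_1|=2\sin(\pi/k)$ from \eqref{min1} gives $|x-\rho\xi_j|\ge\rho\sin(\pi/k)=(\pi\rho/k)(1+o(1))$ as $k\to\infty$. For the $(1-\sigma)$-piece, I use that $\Sigma$ is the Voronoi cell of $\rho\xi_1$ inside the set $\{\rho\xi_1,\dots,\rho\xi_k\}$: writing $x=(r\cos\theta,r\sin\theta,x_3)$ with $\theta\in[-\pi/k,\pi/k]$, a direct computation of $|x-\rho\xi_j|^2$ shows that $\cos(\theta-\tfrac{2\pi(j-1)}{k})\le\cos\theta$ for every $j\ge 2$, hence $|x-\rho\xi_1|\le|x-\rho\xi_j|$. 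Adding the two pieces yields the displayed bound, and then $U(x-\rho\xi_j)\lesssim e^{-|x-\rho\xi_j|}\lesssim e^{-\sigma\pi\rho/k}\,e^{-(1-\sigma)|x-\rho\xi_1|}$.

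The subsector assertion is completely analogous once I verify that each $\widetilde{\Sigma}_\ell$ is the Voronoi cell of $\tilde\eta_\ell$ among all $dk$ peaks $\{\eta_{ij}\}$. From \eqref{rotoi} one computes that the angular coordinate of $\eta_{ij}$ equals $\tfrac{2\pi}{dk}\bigl(d(j-1)-(i-1)\bigr)$, and as $(i,j)$ ranges over $\{1,\dots,d\}\times\{1,\dots,k\}$ the integers $d(j-1)-(i-1)$ cover $\mathbb Z/dk\mathbb Z$ exactly once; hence the $\eta_{ij}$ are the $dk$ vertices of a regular polygon with angular spacing $2\pi/(dk)$, and $\widetilde{\Sigma}_\ell$ is precisely the Voronoi cell of its unique representative $\tilde\eta_\ell$. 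Then \eqref{sii} with \eqref{min2} plays the role of \eqref{si}--\eqref{min1} in the convex-combination argument, and one concludes as before.

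The only mildly delicate point is the $(1+o(1))$ coming from $\sin(\pi/k)\ne\pi/k$: since it multiplies $\sigma\rho/k$ in the exponent, one must check that $(\pi/k-\sin(\pi/k))\cdot\rho/k\to 0$. This is automatic in the admissible range \eqref{raggio}, where $\rho=O(k\ln k)$ and the correction is $O(\ln k/k^2)\to 0$, so the absorption into the implicit constant is compatible with the paper's definition of $\lesssim$. There is no real obstacle beyond this bookkeeping; the whole lemma is a geometric consequence of the fact that the peaks form regular polygons and the sectors are exactly their Voronoi cells.
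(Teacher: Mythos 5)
Your proof is correct and follows the standard convexity/Voronoi-cell argument: split the exponent as $\sigma+(1-\sigma)$, use the minimum angular spacing (Lemma \ref{dist-punti}, \eqref{si}, \eqref{sii}) for the $\sigma$-piece and the fact that $\Sigma$ (resp. $\widetilde\Sigma_\ell$) is the Voronoi sector of $\rho\xi_1$ (resp. $\rho\tilde\eta_\ell$) for the $(1-\sigma)$-piece, then absorb $\sin(\pi/k)$ vs.\ $\pi/k$ into the $(1+o(1))$ allowed by the paper's $\lesssim$ convention. The paper does not prove this lemma itself but delegates to \cite[Lemma A.1]{WY}; your argument is the same one used there.
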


\subsection{The size of the error}\label{5}

First, by  Proposition \eqref{inv}, Lemma \ref{inter} and Lemma \ref{gamma} the function $\Psi_\rho=\beta Y_\rho in \mathscr H$ which solves
 \eqref{refin1} satisfies
\begin{equation}\label{refin4}
\|\Psi_\rho\|_{*}=\beta\|Y_\rho\|_{*}\lesssim \beta \|W_\rho\sum_{i=2}^d W^2_\rho(\hat\Theta_i \cdot)\|_{*}\lesssim { e^{-(1-\alpha)\frac{2\pi\rho}{d k}}}.\eeq

\begin{lemma}\label{inter}
 If $\alpha\in\(0,1\)$ it holds true that
\beq\label{inter1}
 \left\|W_\rho\sum_{i=2}^d W^2_\rho(\hat\Theta_i \cdot)\right\|_{*}\lesssim  e^{-(1-\alpha)\frac{2\pi\rho}{d k}}\eeq
and  \beq\label{inter2} \left\| W_\rho (\cdot)\sum_{i=2}^d W_\rho(\hat\Theta_i \cdot)\right\|_{*}\lesssim e^{-(1-\alpha)\frac{2\pi\rho}{dk}}.\eeq

\end{lemma}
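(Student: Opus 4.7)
The plan is to combine three ingredients: the pointwise exponential decay $U(y)\lesssim e^{-|y|}$ from \eqref{groundstate}; the minimum-distance bound $|\xi_h-\eta_{ih'}|\ge 2\sin(\pi/(dk))$ for $i\ge 2$ provided by \eqref{min2}; and the interpolation $e^{-s-t}\le e^{-\alpha s-\alpha t}\,e^{-(1-\alpha)(s+t)}$ together with the triangle inequality $|x-p_1|+|x-p_2|\ge|p_1-p_2|$.

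As a preliminary step I would first check that the scale $\rho\sim Rk\ln k$ of the admissible range $\mathcal D_k$ makes the whole configuration of peaks very ``dilute'': any two distinct peaks lie at distance at least $2\rho\sin(\pi/(dk))\sim 2\pi R\ln k\to+\infty$. A short direct summation then shows that, for $R$ large enough, the weight function $x\mapsto\sum_{i,j}e^{-\alpha|x-\rho\eta_{ij}|}$ is bounded uniformly in $x$ and in $k$ (the closest peak contributes at most $1$; the $dk-1$ remaining peaks contribute $\lesssim dk\,e^{-2\pi\alpha R\ln k}\to 0$). In particular its square is still dominated by the weight itself, and by the same reasoning each function $W_\rho(\hat\Theta_i\,\cdot)$ is uniformly bounded by a constant.

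To prove \eqref{inter2} I would expand
\[W_\rho(x)\,W_\rho(\hat\Theta_i x)=\sum_{h,h'=1}^{k}U(x-\rho\xi_h)\,U(x-\rho\eta_{ih'}).\]
For each fixed $i\ge 2$ the two peaks belong to different families, so \eqref{min2} applies and, after using $U\lesssim e^{-|\cdot|}$ and the interpolation above, each summand is controlled by $e^{-(1-\alpha)\cdot 2\rho\sin(\pi/(dk))}\,e^{-\alpha|x-\rho\xi_h|}\,e^{-\alpha|x-\rho\eta_{ih'}|}$. Summing over $(h,h')$ produces the square of the weight, which by the preliminary step is again dominated by the weight; the asymptotic $2\sin(\pi/(dk))\sim 2\pi/(dk)$ converts the exponential into $e^{-(1-\alpha)2\pi\rho/(dk)}$. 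Summing over the finitely many $i=2,\ldots,d$ then yields \eqref{inter2}.

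For \eqref{inter1} I would factor $W_\rho^2(\hat\Theta_i x)=W_\rho(\hat\Theta_i x)\cdot W_\rho(\hat\Theta_i x)$, absorb one copy of $W_\rho(\hat\Theta_i x)$ into an absolute constant via the uniform pointwise bound from the preliminary step, and estimate the surviving product $W_\rho\cdot W_\rho(\hat\Theta_i\,\cdot)$ exactly as in the argument for \eqref{inter2}. The main technical obstacle is precisely the uniform $k$-independent control of the weight and of $W_\rho$: without it, the counting of $\sim k^2$ (respectively $\sim k^3$) bubble products in the expansions would leak a polynomial factor in $k$ incompatible with the notation $\lesssim$, and this is exactly the reason why the range $\mathcal D_k$ has been calibrated to force $\rho$ as large as $k\ln k$.
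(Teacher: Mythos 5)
Your argument reaches the right conclusion and does so by a genuinely different route. The paper first passes to the fundamental sector $\Sigma$ and its subsectors $\widetilde\Sigma_\ell$ using the rotational symmetry \eqref{rot}, then invokes Lemma \ref{chiave} to collapse $W_\rho$ and each $W_\rho(\hat\Theta_i\cdot)$ to the single bubble whose peak lies in the relevant subsector, and is finally left with one product of three exponentials that it estimates by splitting into the cases $\ell=0$ and $\ell\neq 0$. You instead stay global: expand the product into the double sum over $(h,h')$, bound each summand using $U(y)\lesssim e^{-|y|}$, the convexity step $e^{-s-t}\le e^{-\alpha s-\alpha t}e^{-(1-\alpha)(s+t)}$, the triangle inequality $s+t\ge|p_1-p_2|$ and the separation estimate \eqref{min2}, and then re-sum. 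The core inequality is exactly the one the paper uses, so the two proofs differ mainly in where the sums are controlled: the paper first reduces sums to single bubbles via Lemma \ref{chiave} and then estimates once, while you estimate each term and then need an a priori bound on the weight to re-sum. Both yield the rate $e^{-(1-\alpha)\,2\pi\rho/(dk)}$; your version has the merit of avoiding Lemma \ref{chiave} altogether.

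There is one genuine gap, in your preliminary step. You claim that the weight $\sum_{i,j}e^{-\alpha|x-\rho\eta_{ij}|}$ is uniformly bounded \emph{for $R$ large enough}, and justify this by the crude count: the nearest peak gives $\lesssim 1$, and the remaining $dk-1$ peaks give $\lesssim dk\, e^{-\alpha\cdot 2\rho\sin(\pi/(dk))}\sim d\,k^{1-2\pi\alpha R/d}$. This requires $2\pi\alpha R/d>1$. But $R$ is not yours to choose large: the reduced problem in Section \ref{9} locates the solution at $\rho\sim\frac{d\nu}{4\pi}k\ln k$, so $R\sim\frac{d\nu}{4\pi}$, and the constraint becomes $\alpha>2/\nu$; on the other hand the argument in Section \ref{9} also needs $\alpha$ small (so that $k^{1-b+\alpha r\frac{4\pi}{d}}=o(1)$), and the two constraints need not be compatible. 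The fix is standard and removes the dependence on $R$ entirely: the $dk$ peaks are nearly equally spaced on a circle of radius $\rho$, so the $m$-th nearest peak to a fixed point sits at distance $\gtrsim m\delta$ with $\delta=2\rho\sin(\pi/(dk))\to\infty$, hence a geometric-series estimate gives $\sum_{i,j}e^{-\alpha|x-\rho\eta_{ij}|}\lesssim 1+\sum_{m\ge 1}e^{-c\alpha m\delta}\lesssim 1$ for every fixed $r_1>0$ and every $\alpha\in(0,1)$. The same refinement also justifies the uniform bound $W_\rho(\hat\Theta_i\cdot)\lesssim 1$ that you use to reduce \eqref{inter1} to \eqref{inter2}. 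With this correction your proof is complete.
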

\begin{proof}
{ We only prove \eqref{inter1}, because the proof of  \eqref{inter2} is similar. \\
Let $x\in \Sigma.$ There exists $\ell\in I$ such that $x\in \widetilde\Sigma_\ell$ and so
$$\begin{aligned}&\left(\sum_{j\in I}e^{-\alpha|x-\rho\tilde\eta_{j}|}\right)^{-1}|W_\rho(x)\sum_{i=2}^d W_\rho^2(\hat\Theta_ix)|\\ &=\left(e^{-\alpha|x-\rho\tilde\eta_{\ell}|}+\sum_{j\in I\atop \not=\ell}e^{-\alpha|x-\rho\tilde\eta_{j}|}\right)^{-1}|W_\rho(x)\sum_{i=2}^d W_\rho^2(\hat\Theta_ix)|\\ &\le
 e^{\alpha|x-\rho\tilde\eta_{\ell}|} |W_\rho(x)\sum_{i=2}^d W_\rho^2(\hat\Theta_ix)|.\end{aligned} $$
Now, by Lemma \ref{chiave}
$$\begin{aligned}W_\rho(x)\sum_{i=2}^d W_\rho^2(\hat\Theta_i x)&\lesssim U(x-\rho\tilde\eta_0)\sum_{i\in I\atop i\neq 0}U^2(x-\rho\tilde\eta_i)\\ &\lesssim e^{-\alpha|x-\rho\tilde\eta_\ell|}\sum_{i\in I\atop i\neq 0}e^{-|x-\rho\tilde\eta_0|-2|x-\rho\tilde\eta_i|+\alpha|x-\rho\tilde\eta_\ell|}\\ &\lesssim e^{-\alpha|x-\rho\tilde\eta_\ell|}e^{-(1-\alpha)\frac{2\pi\rho}{dk}}.\end{aligned}$$ 
Indeed if $\ell=0$  
$$\begin{aligned}|x-\rho\tilde\eta_0|+2|x-\rho\tilde\eta_i|-\alpha|x-\rho\tilde\eta_\ell|&= (1-\alpha)|x-\rho\tilde\eta_0|+2|x-\rho\tilde\eta_i|\\ &\ge (1-\alpha)\rho|\tilde\eta_i-\tilde\eta_0|-(1-\alpha)|x-\rho\tilde\eta_i|+2|x-\rho\tilde\eta_i|\\ &\ge (1-\alpha)\rho\min_{i\in I\atop i\neq 0}|\tilde\eta_i-\tilde\eta_0|\end{aligned}$$
and if  $\ell\neq 0$ since $|x-\rho\tilde\eta_0|\ge |x-\rho\tilde\eta_\ell|$  
$$\begin{aligned}|x-\rho\tilde\eta_0|+2|x-\rho\tilde\eta_i|-\alpha|x-\rho\tilde\eta_\ell|&\ge  (1-\alpha)|x-\rho\tilde\eta_0|+2|x-\rho\tilde\eta_i|\\ &\ge (1-\alpha)\rho|\tilde\eta_i-\tilde\eta_0|-(1-\alpha)|x-\rho\tilde\eta_i|+2|x-\rho\tilde\eta_i|\\ &\ge (1-\alpha)\rho\min_{i\in I\atop i\neq 0}|\tilde\eta_i-\tilde\eta_0|.\end{aligned}$$
Finally, the claim follows.}

\end{proof}

\begin{lemma}\label{gamma}
It holds true that
$$  \int\limits_{\mathbb R^n} W_\rho(x)\sum_{i=2}^d W_\rho^2(\hat \Theta_i x) \partial_\rho W_\rho(x)dx=\left\{\begin{aligned}&-    \mathtt C \(\frac k\rho\)^2e^{-4\rho\frac{\pi}{d k}}\ln\ln k+h.o.t.& \hbox{if}\ n=3\\
&-    \mathtt C \sqrt{\frac k\rho} e^{-4\rho\frac{\pi}{d k}}+h.o.t.& \hbox{if}\ n=2\end{aligned}\right. $$
\end{lemma}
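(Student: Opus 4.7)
The strategy is to identify, after expanding the three factors, the dominant four-bubble clusters and to reduce the problem to a model integral. By the $\mathbb Z_k$-rotational symmetry shared by $W_\rho$, $\partial_\rho W_\rho$ and every $W_\rho(\hat\Theta_i\cdot)$, the integral equals $k$ times the same quantity where $W_\rho(x)$ is replaced by the single bubble $U_1(x):=U(x-\rho\xi_1)$. Cross terms $U_h\partial_\rho U_{h'}$ with $h\neq h'$ inside $W_\rho\partial_\rho W_\rho$ carry an additional factor $e^{-2\pi\rho/k}$ coming from $|\xi_h-\xi_{h'}|\ge 2\sin(\pi/k)$ in Lemma \ref{dist-punti}, which is strictly smaller than $e^{-4\pi\rho/(dk)}$ as soon as $d\ge 3$ and thus sit in $h.o.t.$ Similarly, inside $W_\rho^2(\hat\Theta_i\cdot)=\sum_{h,h'}U(\cdot-\rho\eta_{ih})U(\cdot-\rho\eta_{ih'})$, the off-diagonal pieces are exponentially smaller and the diagonal pieces $U^2(\cdot-\rho\eta_{ih})$ contribute to the leading order only through the $\eta_{ih}$ that are nearest to $\xi_1$: among the species $i\in\{2,\dots,d\}$ these are $\eta_{21}$ (for $i=2$) and $\eta_{d,2}$ (for $i=d$), both at distance $2\sin(\pi/(dk))$, while the other $i$'s yield strictly larger distances and are absorbed into $h.o.t.$

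After the substitution $y=x-\rho\xi_1$ the leading piece reduces to the model integral
\[J(v):=\int_{\mathbb R^n}U(y)\,(-\xi_1\cdot\nabla U(y))\,U^2(y-v)\,dy,\qquad v=\rho(\eta_{i,h^\ast(i)}-\xi_1),\ i=2,d.\]
From $\eta_{21}=(\cos\tfrac{2\pi}{dk},\sin\tfrac{2\pi}{dk},0,\dots)$ one reads off $|v|\sim 2\pi\rho/(dk)$ and, crucially, $v_1/|v|\sim-\pi/(dk)$. A Laplace expansion along the saddle segment $y=tv$, $t\in[0,1]$, on which $|y|+|y-v|=|v|$ is flat, combined with the asymptotics $U(r)\sim\mathfrak u\,r^{-(n-1)/2}e^{-r}$, $\nabla U\sim-\hat y\,U$ and the transverse Gaussian $\int dw\,e^{-|w|^2/(|v|t(1-t))}=(\pi|v|t(1-t))^{(n-1)/2}$, then yields
\[J(v)\sim\frac{v_1}{|v|}\,\mathfrak u^4\pi^{(n-1)/2}\,|v|^{1-3(n-1)/2}\,e^{-2|v|}\int_0^1(t(1-t))^{-(n-1)/2}\,dt.\]
The $t$-integral equals $\pi$ when $n=2$ and diverges logarithmically when $n=3$, in which case it is replaced by $\sim 2\ln|v|$ after cutting off at $t\sim 1/|v|$, where the asymptotic expansion of $U$ fails.

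Summing the two equal contributions $i=2,d$ (equal by the reflection $y_2\leftrightarrow-y_2$), multiplying by the factor $k$ from the rotational sum, and using $v_1/|v|\sim-\pi/(dk)$ to cancel one power of $k$, I arrive at
\[I\sim-\mathtt C\sqrt{k/\rho}\,e^{-4\pi\rho/(dk)}\ \text{ if }n=2,\qquad I\sim-\mathtt C\,(k/\rho)^2\ln\ln k\cdot e^{-4\pi\rho/(dk)}\ \text{ if }n=3,\]
with $\mathtt C=\mathtt C(d,n)>0$, where in the case $n=3$ I also use $\rho\sim k\ln k$ so that $\ln|v|\sim\ln(\rho/k)\sim\ln\ln k$.

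The subtle point is the cancellation inside $J(v)$: since $U\partial_1U$ is odd in $y_1$ while the perpendicular part $v-v_1\xi_1$ of $v$ is even in $y_1$, the naive leading contribution to $J(v)$ vanishes and the true leading order is one power $1/k$ smaller, matching exactly the $k$ produced by the rotational sum. Extracting the correct sign and prefactor therefore requires either a first-order Taylor expansion of $U^2(y-v)$ in the $\xi_1$-direction, or equivalently keeping the correction $\xi_1\cdot\hat v=v_1/|v|$ along the saddle segment. For $n=3$ one must additionally handle the endpoints $t\to 0,1$ of the $t$-integral, where the polynomial tail of $U$ is no longer accurate; this near-field contribution is the source of the $\ln\ln k$ factor that distinguishes the three-dimensional case from the two-dimensional one.
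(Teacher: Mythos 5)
Your proposal reaches the correct leading-order asymptotics, but by a more hands-on route than the paper. Both arguments start from the same reduction: by the rotational symmetry the integral equals $k$ times the integral over the sector $\Sigma$, where the dominant configuration is $U_1\partial_\rho U_1$ tested against the squared bubbles centred at the two peaks $\tilde\eta_{\pm1}=\eta_{21},\eta_{d2}$ nearest to $\xi_1$, at angular distance $2\pi/(dk)$. At that point the paper substitutes $y=x-\rho\tilde\eta_\ell$ and recognises the cross integral as $-\tfrac12\langle\nabla_\zeta\Gamma_{2,2}(\zeta),\xi_1\rangle$ with $\zeta=\rho(\tilde\eta_\ell-\xi_1)$; the answer is then read off from Lemma \ref{new}, the $C^1$ version (proved in Appendix \ref{app1}) of the convolution estimate of Lemma \ref{ruiz}. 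In that framework the parity cancellation you emphasise, i.e.\ the factor $v_1/|v|\sim-\pi/(dk)$, is automatic because $\nabla_\zeta\Gamma_{2,2}(\zeta)$ is parallel to $\zeta$, and the logarithm for $n=3$ is the borderline case $s=t=\frac{n+1}{n-1}$ of Lemma \ref{ruiz}. You instead re-derive those asymptotics from scratch via a Laplace expansion along the saddle segment $y=tv$, which nicely exposes where the $\ln|v|\sim\ln\ln k$ comes from (the endpoints $t\to0,1$) and where the extra $1/k$ gain comes from (the near-transversality of $v$ to $\xi_1$), and you obtain the same powers of $k/\rho$, the same exponential, and the same $\ln\ln k$. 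Be aware, though, that as written the saddle-point computation is heuristic: the transverse Gaussian reduction, the endpoint cutoff $t\sim1/|v|$, the replacement of $\xi_1\cdot\hat y$ by $\xi_1\cdot\hat v$ along the segment, and the uniformity needed to differentiate the resulting asymptotics in $\zeta$ all require justification, which is precisely what Lemma \ref{ruiz} (quoted from \cite{ACR}) and the mean-value argument in Lemmas \ref{new} and \ref{calcolo1} supply. The paper's route is therefore more economical once those auxiliary lemmas are in place; yours is more self-contained in spirit but would need the same estimates to be made rigorous.
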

for some positive constant $\mathtt C$.
\begin{proof} We prove the claim when $n=3.$ The proof in the case $n=2$ is similar.
By Lemma \ref{ruiz}, Lemma \ref{new} and Lemma \ref{chiave} and setting $\eta_{ij}:=\hat\Theta_i^{-1}\xi_j $ and $U_{ij}(x) = U(x-\rho\eta_{ij})$

$$\begin{aligned} &\intR\(\sum_{h=1}^k U_h\)\sum_{i=2}^d \(\sum_{j=1}^k U_j(\hat \Theta_i x)\)^2\partial_\rho W_\rho\\ &= k\int_{ \Sigma}\(U_1+\sum_{h=2}^k U_h\)\sum_{i=2}^d\(\sum_{j=1}^k U _{ij}\)^2\(\partial_\rho U_1+\sum\limits_{i=2}^k \partial_\rho U_i\)\\
&= k \int_{ \Sigma} U_1\partial_\rho U_1\sum_{i=2}^d  \sum_{j=1}^kU_{ij}^2+h.o.t. \\
&= k \sum_{\ell\in \mathcal I\atop\ell\not=0}\int_{ \Sigma} U(x-\rho\xi_1) U'(x-\rho\xi_1 ) {\langle x-\rho\xi_1,-\xi_1\rangle\over|x-\rho\xi_1|}   U^2(x-\rho\tilde\eta_\ell)dx+h.o.t.\\
&= k\sum_{\ell\in \mathcal I\atop\ell\not=0}\int_{ \mathbb R^3} \underbrace{U(x+\rho(\tilde\eta_\ell- \xi_1)) U'(x+\rho(\tilde\eta_\ell- \xi_1)  ) {\langle x+\rho(\tilde\eta_\ell- \xi_1),-\xi_1\rangle\over|x+\rho(\tilde\eta_\ell- \xi_1) |} }_{=-\langle\frac12\nabla_\zeta \Gamma_{22}(\zeta),\xi_1\rangle,\ \zeta=\rho\(\tilde\eta_\ell-\xi_1 \)}
 U^2(x)dx+h.o.t.\\
&= - k \mathfrak c\sum_{\ell\in \mathcal I\atop\ell\not=0}\frac12\left\langle  {\xi_1-\tilde\eta_\ell\over |\xi_1-\tilde\eta_\ell|},\xi_1\right\rangle {e^{-2\rho|\xi_1 -\tilde\eta_\ell|}\ln(\rho|\xi_1 -\tilde\eta_\ell|) \over  |\rho\(\xi_1 -\tilde\eta_\ell\)|^2  }+h.o.t.\\ & = 
-     \mathfrak c  \frac d{8\pi} k^2 \frac{e^{-4\rho\frac{\pi}{d k}}}{  \rho^2 }\ln\ln k+h.o.t. \end{aligned}$$
because of \eqref{min2}, the choice of $\rho\in \mathcal D_k$ \eqref{raggio} and the fact that
\beq\label{fino2}\min_{\ell\in \mathcal I\atop\ell\not=0}|\tilde\eta_\ell -\xi_1|=|\eta_{21}-\xi_1|\sim {2\pi\over dk} \ \hbox{and}\ \langle  \xi_1- \eta_{21},\xi_1\rangle=1-\cos{2\pi\over dk}\sim \frac12 \(2\pi\over dk\)^2.\eeq

\end{proof}
Finally, we find the size of the error.
\begin{lemma}\label{errore}
For any $\alpha \in (0, 1)$, it holds true that
$$\|\mathcal E\|_{*}\lesssim \frac{1}{\rho^\nu}+{ e^{-(1-\alpha)\frac{4\pi\rho}{d k}}}.$$
\end{lemma}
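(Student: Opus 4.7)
I would split $\mathcal E$ into four pieces matching the structure of \eqref{err}: the potential term $(1-V)W_\rho$, the pure self-interaction $\Delta W_\rho - W_\rho + W_\rho^3$, the correction $\gamma_\rho\partial_\rho W_\rho$ coming from the bracketed identity $-\mathcal L(\Psi_\rho)+\beta W_\rho\sum_{i\ge 2} W_\rho^2(\hat\Theta_i\cdot)$, and the collection of remaining nonlinear terms involving $\Psi_\rho$. For the first piece, \eqref{ipo-v} gives $|1-V(x)|\lesssim (1+|x|)^{-\nu}$, and since each bubble $U_h$ is concentrated at $\rho\xi_h$ with $|\rho\xi_h|=\rho$, the product $(1-V)W_\rho$ lives where $|x|\sim\rho$, producing $\|(1-V)W_\rho\|_*\lesssim \rho^{-\nu}$. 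For the second piece, the identities $-\Delta U_h+U_h=U_h^3$ reduce it to $W_\rho^3-\sum_h U_h^3$, a sum of cross-terms involving at least two distinct bubbles. By Lemma~\ref{chiave} and the minimum distance $|\xi_1-\xi_2|\sim 2\pi/k$ from \eqref{min1}, each cross-term is bounded in $\|\cdot\|_*$ by $e^{-(1-\alpha)2\pi\rho/k}$; since $d\ge 3$ one has $2\pi/k>4\pi/(dk)$, so this is absorbed by the target bound $e^{-(1-\alpha)4\pi\rho/(dk)}$.

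\textbf{Remaining pieces.} For $\gamma_\rho\partial_\rho W_\rho$, I note that $\|\partial_\rho W_\rho\|_*\lesssim 1$ (directly from \eqref{groundstate}), while the denominator in \eqref{refin2} satisfies $\int|\partial_\rho W_\rho|^2\sim k$ by near-orthogonality of the well-separated bubbles; combining this with Lemma~\ref{gamma} yields $|\gamma_\rho|\lesssim (k/\rho^2)\log\log k\cdot e^{-4\pi\rho/(dk)}$, which for $\rho\in\mathcal D_k$ is much smaller than $e^{-(1-\alpha)4\pi\rho/(dk)}$. Finally, every remaining term in $\mathcal E$ is at least quadratic in $\Psi_\rho$, namely $\Psi_\rho^3$, $W_\rho\Psi_\rho^2$, $\beta\Psi_\rho\sum_{i\ge 2} 2W_\rho(\hat\Theta_i\cdot)\Psi_\rho(\hat\Theta_i\cdot)$, $\beta\Psi_\rho\sum_{i\ge 2}\Psi_\rho^2(\hat\Theta_i\cdot)$ and $\beta W_\rho\sum_{i\ge 2}\Psi_\rho^2(\hat\Theta_i\cdot)$. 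Using the uniform bound $\|W_\rho\|_\infty\lesssim 1$, the product rule $\|uv\|_*\le\|u\|_\infty\|v\|_*$, and the estimate $\|\Psi_\rho\|_*\lesssim e^{-(1-\alpha)2\pi\rho/(dk)}$ provided by \eqref{refin4}, each such term is controlled by $\|\Psi_\rho\|_*^2\lesssim e^{-(1-\alpha)4\pi\rho/(dk)}$.

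\textbf{Main obstacle.} The delicate point is the bookkeeping of the weighted norms for the cross-terms (both in the pure self-interaction step and in the $\Psi_\rho$-products): one has to split the exponential decay of a ``far'' bubble as in Lemma~\ref{chiave}, absorb one factor $e^{-\alpha|x-\rho\tilde\eta_\ell|}$ into the weight $\sum_{i,j} e^{-\alpha|x-\rho\eta_{ij}|}$, and extract from the remaining exponent the correct interaction factor, either $e^{-(1-\alpha)2\pi\rho/k}$ (for same-component peaks) or $e^{-(1-\alpha)4\pi\rho/(dk)}$ (for different-component peaks). The use of the sector decomposition $\Sigma=\cup_i\widetilde\Sigma_i$ introduced in Section~\ref{5-}, together with \eqref{si}--\eqref{sii}, makes this precise. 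Summing the four contributions gives the claimed bound $\|\mathcal E\|_*\lesssim \rho^{-\nu}+e^{-(1-\alpha)4\pi\rho/(dk)}$.
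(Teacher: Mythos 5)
Your proposal reproduces the paper's own argument: the same four-way decomposition of $\mathcal E$ (potential piece, self-interaction piece, $\gamma_\rho\partial_\rho W_\rho$ piece, and the $\Psi_\rho$-quadratic remainder), the same use of Lemma~\ref{chiave} with the sector decomposition to extract the interaction exponent, the same appeal to Lemma~\ref{gamma} together with $\int(\partial_\rho W_\rho)^2\sim k$ to control $\gamma_\rho$, and the same quadratic bound $\|\Psi_\rho\|_*^2\lesssim e^{-(1-\alpha)4\pi\rho/(dk)}$ via \eqref{refin4}. Your bound $e^{-(1-\alpha)2\pi\rho/k}$ for the self-interaction is a touch weaker than the paper's $e^{-2\pi\rho/k}$, but it is still dominated by $e^{-(1-\alpha)4\pi\rho/(dk)}$ for $d\ge 3$, so nothing is lost.
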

\begin{proof}
By \eqref{err} taking into account that 
  the functions $W_\rho,$ $\partial_\rho W_\rho$ and the weight in the norm are bounded  
we get
$$\begin{aligned}
\|\mathcal E\|_{*}&\lesssim\underbrace{\|\(1-V(x)\)W_\rho  \|_{*}}_{:=E_1}+\underbrace{\|\Delta W_\rho-W_\rho +W_\rho^3\|_{*}}_{:=E_2}+   \underbrace{|\gamma_\rho|}_{:=E_3} \\
&+ \underbrace{\| \Psi_\rho\|_{*}^3+ \| \Psi_\rho\|_{*}^2}_{\lesssim { e^{-(1-\alpha)\frac{4\pi\rho}{d k}}}\ \hbox{\tiny because of \eqref{refin4} }}\\
&\lesssim \frac 1{\rho^\nu}+{ e^{-(1-\alpha)\frac{4\pi\rho}{d k}}}.
\end{aligned}$$
 
\begin{itemize}
\item {\em Estimate of $E_1.$}
{Let $x\in\Sigma$ then there is $\ell\in I$ such that $x\in \widetilde\Sigma_\ell$. Then $$\left(\sum_{j\in I}e^{-\alpha|x-\rho\tilde\eta_j|}\right)^{-1}|E_1(x)|\lesssim e^{\alpha|x-\rho\tilde\eta_\ell|}|V(x)-1|e^{-|x-\rho\tilde\eta_0|}.$$ We note that if $|x|<\frac\rho 2$ then $|x-\rho\tilde\eta_0|\ge \frac\rho 2$. Now if $x\in  \widetilde\Sigma_0\cap \left\{|x|<\frac\rho 2\right\}$ then $$-\alpha|x-\rho\tilde\eta_\ell|+|x-\rho\tilde\eta_0|=(1-\alpha)|x-\rho\tilde\eta_0|\ge (1-\alpha)\frac\rho 2.$$ If $x\in \widetilde\Sigma_\ell\cap\left\{|x|<\frac\rho 2\right\}$ with $\ell\neq 0$ then $|x-\rho\tilde\eta_0|\ge |x-\rho\tilde\eta_\ell|$ and hence $$\begin{aligned}-\alpha|x-\rho\tilde\eta_\ell|+|x-\rho\tilde\eta_0|&=(1-\alpha)|x-\rho\tilde\eta_0|+\alpha\left(|x-\rho\tilde\eta_0|-|x-\rho\tilde\eta_\ell|\right)\\ &\ge (1-\alpha)|x-\rho\tilde\eta_0|\ge (1-\alpha)\frac\rho 2.\end{aligned}$$ Then, since $V\in L^\infty(\mathbb R^n)$ we get that in $\left\{|x|<\frac\rho 2\right\}$ $$\left(\sum_{j\in I}e^{-\alpha|x-\rho\tilde\eta_j|}\right)^{-1}|E_1(x)|\lesssim e^{-(1-\alpha)\frac\rho 2}.$$ 
If $|x|\ge \frac\rho 2$ by \eqref{ipo-v} $|V(x)-1|\lesssim \frac{1}{\rho^\nu}$ and so for $x\in  \widetilde\Sigma_0\cap \left\{|x|\ge \frac\rho 2\right\}$ $$-\alpha|x-\rho\tilde\eta_\ell|+|x-\rho\tilde\eta_0|=(1-\alpha)|x-\rho\tilde\eta_0|\ge 0.$$ For $x\in \widetilde\Sigma_\ell\cap \left\{|x|\ge \frac\rho 2\right\}$ with $\ell\neq 0$ since $|x-\rho\tilde\eta_0|\ge |x-\rho\tilde\eta_\ell|$ we get $$-\alpha|x-\rho\tilde\eta_\ell|+|x-\rho\tilde\eta_0|=(1-\alpha)|x-\rho\tilde\eta_0|+\alpha\left(|x-\rho\tilde\eta_0|-|x-\rho\tilde\eta_\ell|\right)\ge 0.$$ Hence in $\left\{|x|\ge \frac\rho 2\right\}$ we get
$$\left(\sum_{j\in I}e^{-\alpha|x-\rho\tilde\eta_j|}\right)^{-1}|E_1(x)|\lesssim \frac{1}{\rho^\nu}.$$ This implies $$\|E_1\|_{*}\lesssim \frac{1}{\rho^\nu}.$$
 }


\item {\em Estimate of $E_2.$} {Let $x\in\Sigma$ then there is $\ell\in I$ such that $x\in  \widetilde\Sigma_\ell$. }
Now since
$$\Delta W_\rho-W_\rho +W_\rho^3=\(\sum\limits_{h=1}^k U_h\)^3-\sum\limits_{h=1}^k U_h^3$$ we get that 
$${\left(\sum_{i\in I} e^{-\alpha|x-\rho\tilde\eta_i|}\right)^{-1}|E_2(x)|\lesssim e^{\alpha|x-\rho\tilde\eta_\ell|}\left|\(\sum\limits_{h=1}^k U_h\)^3-\sum\limits_{h=1}^k U_h^3\right|}.$$
Now by   Lemma \ref{chiave}
$${\begin{aligned}&\(\(\sum\limits_{h=1}^k U_h\)^3-\sum\limits_{h=1}^k U_h^3\)\\ &=\(U_1^3+3U_1^2\sum_{h\ge 2}^k U_h +3 U_1\(\sum_{h\ge 2}^k U_h\)^2+\(\sum_{h\ge 2}^k U_h\)^3-U_1^3-\sum_{h\ge 2}^k U_h^3\)\\
&\lesssim  U_1^2 \sum_{h\ge 2}^k U_h \\
&\lesssim e^{-\alpha|x-\rho\tilde\eta_\ell|}\sum_{h\ge 2}^k {e^{\alpha|x-\rho\tilde\eta_\ell|-2|x-\rho\xi_1|-|x-\rho\xi_h|}}\\
 &\lesssim  e^{-\alpha|x-\rho\tilde\eta_\ell|}e^{-\frac{2\pi\rho}{k}} 
\end{aligned}}$$
{because  (remind that $\xi_1=\tilde\eta_0$)  if $x\in  \widetilde\Sigma_0$   $$\begin{aligned}&-\alpha|x-\rho\tilde\eta_\ell|+2|x-\rho\xi_1|+|x-\rho\xi_h|\\ &=(2-\alpha)|x-\rho\xi_1|+|x-\rho\xi_h|\\ &\geq \rho|\xi_1-\xi_h|+(1-\alpha)|x-\rho\xi_1|\ge \rho|\xi_1-\xi_h|\end{aligned}$$ while if $x\in \widetilde\Sigma_\ell$ with $\ell\neq 0$ then $|x-\rho\tilde\eta_0|\ge|x-\rho\tilde\eta_\ell|$ and
$$\begin{aligned}&-\alpha|x-\rho\tilde\eta_\ell|+2|x-\rho\xi_1|+|x-\rho\xi_h|\\ &\ge \rho|\xi_1-\xi_h| +|x-\rho\xi_1|-\alpha|x-\rho\tilde\eta_\ell|\\ &= \rho|\xi_1-\xi_h|+(1-\alpha)|x-\rho\xi_1|+\alpha\(|x-\rho\tilde\eta_0|-|x-\rho\tilde\eta_\ell|\)\\&\ge \rho|\xi_1-\xi_h|.\end{aligned}$$ Then by \eqref{min1} $$\sum_{h= 2}^k e^{\alpha|x-\rho\tilde\eta_\ell|-2|x-\rho\xi_1|-|x-\rho\xi_h|}\lesssim \sum_{h=2}^k e^{-\rho|\xi_1-\xi_h|}\lesssim e^{-\rho\frac{2\pi}{k}}.$$  }

Therefore, the estimate
$$\|E_3\|_{*}\lesssim    e^{-\frac{2\pi\rho}{k}} $$ follows.

\item {\em Estimate of $E_3.$} {By Lemma \ref{gamma} and the fact that $\int\limits_{\mathbb R^n}(\partial_\rho W_\rho)^2\sim ck$ for some positive constant $c$,
we get
$$|\gamma_\rho|\lesssim  \frac{k }{  \rho^2 }e^{-4\rho\frac{\pi}{d k}}\ln\ln k \lesssim e^{-(1-\alpha)\frac{4\pi}{d k}\rho}.$$}

\end{itemize}
\end{proof}

\subsection{Solving the  intermediate non-linear problem (\ref{pbint}).}\label{6}
 
The next step relies on a classical contraction mapping argument.

\begin{proposition}\label{resto}
{For any compact set $B\subset \mathfrak B$  (see Remark \ref{wep}), there exists $k_0>0$ such that for any $\beta\in B$, for any $k\ge k_0$} and for any $\rho\in\mathcal D_k$, there is a unique  $ (\Phi,\mathfrak c)\in\mathscr H \times \mathbb R$  which  solves \eqref{pbint}. Moreover
   \beq\label{size}
   \|\Phi \|_{*}\lesssim\( \frac{1}{\rho^\nu}+{e^{-(1-\alpha)\frac{4\pi\rho}{dk}}}\).\eeq
\end{proposition}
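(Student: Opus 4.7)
The plan is to recast \eqref{pbint} as a fixed-point equation and apply Banach's contraction principle in a small ball of the weighted space $\mathscr H\cap\mathscr B_{*}$. By Proposition \ref{inv}, for every $h\in\mathscr B_{*}$ satisfying \eqref{pari}--\eqref{rot} there is a unique pair $(\Phi,\mathfrak c)\in(\mathscr H\cap\mathscr B_{*})\times\mathbb R$ with $\int_{\mathbb R^n}\partial_\rho W_\rho\,\Phi=0$ solving $\mathcal L(\Phi)=h+\mathfrak c\,\partial_\rho W_\rho$, and $\|\Phi\|_{*}+|\mathfrak c|\lesssim\|h\|_{*}$ uniformly in $\beta\in B$, $k\ge k_0$ and $\rho\in\mathcal D_k$. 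Denoting by $\mathcal T$ the corresponding solution operator, \eqref{pbint} is equivalent to
$$\Phi=\mathcal A(\Phi):=\mathcal T\bigl(\mathcal N(\Phi)+\mathcal E\bigr),$$
and I look for $\Phi$ in
$$\mathcal F_{R}:=\left\{\Phi\in\mathscr H\cap\mathscr B_{*}:\ \|\Phi\|_{*}\le C_0R,\ \int_{\mathbb R^n}\partial_\rho W_\rho\,\Phi=0\right\},\quad R:=\tfrac{1}{\rho^\nu}+e^{-(1-\alpha)\frac{4\pi\rho}{dk}},$$
where $C_0$ is a large constant to be fixed.

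For the self-map property I use $\|\mathcal A(\Phi)\|_{*}\lesssim\|\mathcal N(\Phi)\|_{*}+\|\mathcal E\|_{*}$, note that Lemma \ref{errore} gives $\|\mathcal E\|_{*}\lesssim R$, and estimate $\mathcal N$ term by term from \eqref{qua}. Every monomial is either \emph{genuinely nonlinear} in $\Phi$ (such as $\Phi^3$, $W_\rho\Phi^2$, $\Psi_\rho\Phi^2$ and their rotated analogues $\Phi^2(\hat\Theta_i x)$), hence at least quadratic, or \emph{linear in $\Phi$ with a small coefficient} (the terms $W_\rho\Psi_\rho\Phi$, $\Psi_\rho^2\Phi$ and their non-local counterparts). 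Two standard facts are used throughout: the weight $\sum_{i,j}e^{-\alpha|x-\rho\eta_{ij}|}$ is uniformly bounded in $k$ because the $dk$ centres $\rho\eta_{ij}$ are separated by distances $\gtrsim\rho/k\sim\ln k$ (Lemma \ref{dist-punti}), so the finite sum behaves like a convergent geometric series with ratio $k^{-\alpha}$; and the set of centres is globally $\hat\Theta_i$-invariant, which yields $\|\Phi(\hat\Theta_i\cdot)\|_{*}=\|\Phi\|_{*}$ and the analogous identities for $W_\rho$ and $\Psi_\rho$. Combining these remarks with \eqref{normastella} and \eqref{refin4} gives, for $\Phi\in\mathcal F_R$,
$$\|\mathcal N(\Phi)\|_{*}\lesssim\bigl(1+\|W_\rho\|_\infty+\|\Psi_\rho\|_{*}\bigr)\|\Phi\|_{*}^2+\|\Psi_\rho\|_{*}\|\Phi\|_{*}=o(R)\quad\text{as }k\to\infty,$$
so that $\|\mathcal A(\Phi)\|_{*}\le C_0R$ once $C_0$ is chosen to dominate the implicit constants from Proposition \ref{inv} and Lemma \ref{errore}. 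The symmetries \eqref{pari}--\eqref{rot} and the orthogonality are preserved by $\mathcal T$, hence $\mathcal A$ stabilises $\mathcal F_R$.

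The contraction property is obtained by the same splitting applied to $\mathcal N(\Phi_1)-\mathcal N(\Phi_2)$: each monomial difference is Lipschitz with constant $\lesssim\|\Phi_1\|_{*}+\|\Phi_2\|_{*}+\|\Psi_\rho\|_{*}=o(1)$ for $k$ large, which yields $\|\mathcal A(\Phi_1)-\mathcal A(\Phi_2)\|_{*}\le\tfrac12\|\Phi_1-\Phi_2\|_{*}$. Banach's theorem then produces a unique $\Phi\in\mathcal F_R$ solving \eqref{pbint}, with $\mathfrak c$ uniquely determined by Proposition \ref{inv}, and the bound \eqref{size} is just $\|\Phi\|_{*}\le C_0R$. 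The main obstacle, which is more bookkeeping than conceptual, is the careful treatment of the many non-local monomials in \eqref{qua} involving $\Phi(\hat\Theta_i x)$ and $\Psi_\rho(\hat\Theta_i x)$: one must systematically invoke the $\hat\Theta_i$-invariance of the weight and verify that the small factor $\|\Psi_\rho\|_{*}$ from \eqref{refin4} absorbs the extra linear-in-$\Phi$ perturbations introduced by the ansatz correction $\Psi_\rho$ — a point which would otherwise threaten the invertibility of $\mathcal L$ already secured by Proposition \ref{inv}.
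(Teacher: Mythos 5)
Your proof follows the same strategy as the paper: recast \eqref{pbint} as a fixed-point equation for the solution operator of Proposition~\ref{inv}, and apply Banach's contraction principle in a ball of radius comparable to $\frac{1}{\rho^\nu}+e^{-(1-\alpha)\frac{4\pi\rho}{dk}}$, using Lemma~\ref{errore} for $\|\mathcal E\|_*$ and the quadratic/small-linear-coefficient structure of $\mathcal N$ from \eqref{qua} together with the bound \eqref{refin4} on $\|\Psi_\rho\|_*$. The extra remarks you add on the $\hat\Theta_i$-invariance of the weight and on the separation of the centres are correct and make explicit what the paper leaves implicit, but the argument is essentially identical.
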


\begin{proof} 
For a given $R>0$, let us consider the ball $$\mathcal B_k:=\left\{\Phi\in L^{\infty}(\mathbb R^n)\,:\, \|\Phi\|_{*}\le R\(\frac{1}{\rho^\nu}+{e^{-(1-\alpha)\frac{4\pi\rho}{dk}}}\)\right\}$$ which is a non-empty closed subset of $\mathscr B_{*}$.
Let us also introduce the map $\mathcal T:\mathcal B_k\cap\mathscr H \to \mathcal B_k \cap \mathscr H$ as 
$$\mathcal T(\Phi):=-\mathcal L^{-1}\(\mathcal N(\Phi)+\mathcal E\).$$
Now solving \eqref{pbint} is equivalent to find a fixed point to $\mathcal T$. \\ It is quite standard to prove that $\mathcal T$ is a contraction mapping for some $R$ provided $k$ is large enough.
Indeed, by Proposition \ref{inv}  
$$\|\mathcal T(\Phi)\|_{*}\lesssim \(\|\mathcal N(\Phi)\|_{*}+\|\mathcal E\|_{*}\)\ \hbox{and}\ \|\mathcal T(\Phi_1)-\mathcal T(\Phi_2)\|_{*} \lesssim \|\mathcal N(\Phi_1)-\mathcal N(\Phi_2)\|_{*}.$$
Moreover, by \eqref{qua} and since $W_\rho \lesssim 1$
 $$ 
\|\mathcal N(\Phi)\|_{*} \lesssim\|\Psi_\rho\|_{*}\|\Phi\|_{*}+\|\Phi\|_{*}^2+\|\Phi\|_{*}^3$$
and
$$ 
\|\mathcal N(\Phi_1)-\mathcal N(\Phi_2)\|_{*} \lesssim\(\|\Phi_1\|_{*}+\|\Phi_2\|_{*}+  \|\Psi_\rho\|_{*}\)\|\Phi_1-\Phi_2\|_{*}.$$
Finally, by Lemma \ref{errore} and \eqref{refin4} the claim follows.

\end{proof}

\subsection{The reduced problem} \label{7}
Finally, the problem reduces to find $\rho$ such that 
\begin{equation}\label{finale}\mathfrak C_k(\rho):=\intR\(\mathcal E +\mathcal N(\Phi)-\mathcal L(\Phi)\)\partial_\rho W_\rho=0,\end{equation}
 where  $\Phi=\Phi(\rho,k)$ and $\mathfrak c=\mathfrak c(\rho,k)$ are the solutions of \eqref{pbint} found in  Proposition \ref{resto}.\\

\begin{lemma}
\label{crho}{For any compact set $B\subset \mathfrak B$  (see Remark \ref{wep}), there exists $k_0>0$ such that for any $\beta\in B$, for any $k\ge k_0$} 
 and $\rho\in \mathcal D_k$  
$$\begin{aligned}\mathfrak C_k(\rho)&=\left\{\begin{aligned}& \mathfrak v_\infty\mathtt A\frac{k}{\rho^{\nu+1}}(1+o(1)) -\mathtt B \frac k\rho e^{-2\rho\frac\pi k}   (1+o(1))  - \beta   \mathtt C\(\frac k\rho\)^2  e^{-4\rho\frac{\pi}{d k}}  \ln\ln k (1+o(1))\\
&{+\beta^2 \Upsilon(k,\rho)}\ \hbox{if}\ n=3\\
& \mathfrak v_\infty\mathtt A  \frac{k}{\rho^{\nu+1}}(1+o(1)) -\mathtt B \sqrt{\frac k\rho}  e^{-2\rho\frac\pi k}   (1+o(1))  - \beta   \mathtt C\sqrt{\frac k\rho}  e^{-4\rho\frac{\pi}{d k}}  (1+o(1))\\
&{+\beta^2 \Upsilon(k,\rho)}\ \hbox{if}\ n=2\end{aligned}\right.\end{aligned}$$
where   $\mathtt A,$ $\mathtt B$ and $\mathtt C$  are positive constants only depending on $n$
and
\begin{equation}\label{finale00} |\Upsilon(k,\rho)|\lesssim k  {e^{-(1-\alpha) \frac{4\pi\rho}{d k}}}.\end{equation}

\end{lemma}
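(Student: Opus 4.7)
The starting point is that since $\Phi$ solves \eqref{pbint}, one has $\mathcal E+\mathcal N(\Phi)-\mathcal L(\Phi)=-\mathfrak c\,\partial_\rho W_\rho$, so that
$$
\mathfrak C_k(\rho)=\int_{\mathbb R^n}\mathcal E\,\partial_\rho W_\rho+\int_{\mathbb R^n}\mathcal N(\Phi)\,\partial_\rho W_\rho-\int_{\mathbb R^n}\mathcal L(\Phi)\,\partial_\rho W_\rho.
$$
My plan is to pull the four announced leading terms out of the $\mathcal E$-integral, using the decomposition \eqref{err}, and then to absorb the two $\Phi$-integrals into the higher order remainder $\beta^2\Upsilon$ by means of Proposition~\ref{resto}, the size estimate \eqref{size}, and the orthogonality $\int\partial_\rho W_\rho\,\Phi=0$.

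For the potential piece $\int(1-V)W_\rho\,\partial_\rho W_\rho$, I would localize near each peak $\rho\xi_h$, change variables $y=x-\rho\xi_h$, and use \eqref{ipo-v} to Taylor expand $|y+\rho\xi_h|^{-\nu}=\rho^{-\nu}-\nu(y\cdot\xi_h)\rho^{-\nu-1}+O(\rho^{-\nu-2})$. The $\rho^{-\nu}$-piece vanishes because $\int_{\mathbb R^n}U(\nabla U\cdot\xi_h)=\tfrac12\int_{\mathbb R^n}\partial_{\xi_h}U^2=0$, while the $\rho^{-\nu-1}$-piece reduces, by the radiality of $U$, to a positive one-dimensional integral times $\mathfrak v_\infty$; summing over the $k$ peaks yields $\mathfrak v_\infty\mathtt A\,k/\rho^{\nu+1}(1+o(1))$. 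The outer region $\{|x|\ge\rho/2\}$ and the remainder $O(|y+\rho\xi_h|^{-\nu-\epsilon})$ produce strictly smaller contributions by the same argument already carried out for $E_1$ in Lemma~\ref{errore}.

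The self-interaction piece $\int(W_\rho^3-\sum_h U_h^3)\,\partial_\rho W_\rho$ is handled by expanding $W_\rho^3-\sum_h U_h^3=3\sum_h U_h^2\sum_{j\ne h}U_j+(\text{cubic cross-terms})$, retaining only the nearest-neighbour pair $U_1^2(U_2+U_k)$, and using the rotational invariance \eqref{rot} to pick up a global factor $k$. Its leading asymptotics is the classical two-bubble interaction integral whose evaluation at distance $2\rho\sin(\pi/k)\sim 2\pi\rho/k$ rests on \eqref{groundstate}, \eqref{min1} and the Green-function/modified-Bessel expansion of Lemma~\ref{ruiz}, and produces $-\mathtt B(k/\rho)e^{-2\rho\pi/k}(1+o(1))$ when $n=3$ and $-\mathtt B\sqrt{k/\rho}\,e^{-2\rho\pi/k}(1+o(1))$ when $n=2$. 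For the coupling piece, the bracket in \eqref{err} is by construction a scalar multiple of $\partial_\rho W_\rho$; its pairing with $\partial_\rho W_\rho$ reduces, through the definition \eqref{refin2} of $\gamma_\rho$ and the factor $\beta$ built into $\Psi_\rho=\beta Y_\rho$, to $\beta\int_{\mathbb R^n}W_\rho\sum_{i=2}^d W_\rho^2(\hat\Theta_i\cdot)\,\partial_\rho W_\rho$, and Lemma~\ref{gamma} then delivers the third announced term.

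Everything else I bundle into $\beta^2\Upsilon(k,\rho)$. The summands of $\mathcal E$ involving $\Psi_\rho$ carry at least a factor $\beta^2$ because $\Psi_\rho=\beta Y_\rho$, and are controlled using $\|\Psi_\rho\|_{*}\lesssim\beta\,e^{-(1-\alpha)2\pi\rho/(dk)}$ from \eqref{refin4} together with \eqref{normastella}, which gives a bound $\lesssim\beta^2 k\,e^{-(1-\alpha)4\pi\rho/(dk)}$ compatible with \eqref{finale00}. The integral $\int\mathcal N(\Phi)\,\partial_\rho W_\rho$ is at least quadratic in $\Phi$ and is therefore negligible compared with the three main terms thanks to \eqref{size}, while $\int\mathcal L(\Phi)\,\partial_\rho W_\rho$ is handled by moving $\mathcal L$ onto $\partial_\rho W_\rho$, using that $\partial_\rho W_\rho$ is an approximate kernel element of $\mathcal L$ up to terms of the same order as the three leading contributions, and then invoking $\int\partial_\rho W_\rho\,\Phi=0$. \emph{The principal obstacle}, exactly as the authors stress immediately after the statement, is that $\Upsilon$ cannot be evaluated but only bounded: extracting its true leading behaviour would require the precise asymptotic decay of $Y_\rho$ at infinity, which is currently out of reach, and it is precisely this that forces the smallness assumption on $\beta$ in Theorem~\ref{main}.
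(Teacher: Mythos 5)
Your plan reproduces the paper's argument essentially step by step: the decomposition of $\int\mathcal E\,\partial_\rho W_\rho$ into the potential, self-interaction, and coupling pieces, the recognition that the bracket in \eqref{err} is $\beta\gamma_\rho\,\partial_\rho W_\rho$ so that Lemma~\ref{gamma} delivers the $\mathtt C$-term, the use of Lemmas~\ref{ruiz} and~\ref{new} for the $\mathtt B$-term, and the control of the $\Psi_\rho$-terms via \eqref{refin4}. Two small points of imprecision are worth flagging. First, your opening sentence of the last paragraph (\emph{``Everything else I bundle into $\beta^2\Upsilon$''}) cannot be taken literally: $\int\mathcal N(\Phi)\,\partial_\rho W_\rho$ and $\int\mathcal L(\Phi)\,\partial_\rho W_\rho$ carry no $\beta^2$ factor and contain contributions of size $k\rho^{-2\nu}$ and $k\rho^{-\nu}\|\Phi\|_*$ that violate the bound \eqref{finale00}; you correct this yourself in the next sentences by saying these integrals are absorbed into the $(1+o(1))$ of the leading terms rather than into $\Upsilon$, which is also what the paper does (the paper's $\Upsilon$ is exactly $I_4=\int\big(3W_\rho\Psi_\rho^2+\beta W_\rho\sum_i\Psi_\rho^2(\hat\Theta_i\cdot)\big)\partial_\rho W_\rho$, and $I_5,I_6$ carry extra powers of $\beta$). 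Second, for $\int\mathcal L(\Phi)\,\partial_\rho W_\rho$ the paper does not invoke the orthogonality $\int\partial_\rho W_\rho\,\Phi=0$; it moves $\mathcal L$ onto $\partial_\rho W_\rho$ and shows directly that $\sum_h 3U_h^2\langle\nabla U_h,-\xi_h\rangle-3W_\rho^2\partial_\rho W_\rho$ has small $\|\cdot\|_*$-norm, together with the separate smallness of the potential and coupling pieces $L_1$, $L_3$. Relying on orthogonality alone would not give the needed bound, since it only removes the component of $\mathcal L(\partial_\rho W_\rho)$ along $\partial_\rho W_\rho$; what actually does the work is the smallness of $\mathcal L(\partial_\rho W_\rho)$ itself. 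With these caveats the proposal is correct and takes the same route as the paper.
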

\begin{proof}
We know that $$\partial_\rho W_\rho =\sum_{h=1}^k \partial_\rho U_h =\sum_{h=1}^k \langle\nabla U_h, (-\xi_h) \rangle.$$

First, let us estimate the leading term
$$\begin{aligned}
\intR\mathcal E\partial_\rho W_\rho:=&\underbrace{\intR(1-V(x)) W_\rho\partial_\rho W_\rho}_{=I_1}+\underbrace{\intR\(\Delta W_\rho-W_\rho+W_\rho^3\)\partial_\rho W_\rho}_{=I_2}  \\
&+\underbrace{\gamma_\rho\intR\(\partial_\rho W_\rho\)^2}_{=I_3}\\
&+\underbrace{\intR 3 W_\rho\Psi_\rho^2\partial_\rho W_\rho+\intR\beta W_\rho\sum_{i=2}^d     \Psi^2_\rho(\hat\Theta_i x)\partial_\rho W_\rho}_{=:\Upsilon(k,\rho)=I_4}
\\ &+\underbrace{\intR2\beta \Psi_\rho\sum_{i=2}^d   W_\rho(\hat\Theta_i x) \Psi_\rho(\hat\Theta_i x)\partial_\rho W_\rho}_{=I_5}\\ &+\underbrace{\intR\(\Psi_\rho^3+ \beta \Psi_\rho\sum_{i=2}^d  \Psi^2_\rho(\hat\Theta_i x) \)\partial_\rho W_\rho}_{=I_6}
\end{aligned}$$

\begin{itemize}
\item{\em Estimate of $I_1$.}
 
By Lemma \ref{chiave}
$$\begin{aligned}I_1 &= k\int_{ \Sigma}(1-V(|x|))\(U_1+\sum_{h=2}^kU_h\) \(\partial_\rho U_1+\sum\limits_{i=2}^k \partial_\rho U_i\)\\ &=k\int_{ \Sigma}(1-V(|x|))U_1\partial_\rho U_1+h.o.t.
 \\
 &=k\int_{ \Sigma}(1-V(|x|))U(x-\rho\xi_1) U'(x-\rho\xi_1 ) {\langle x-\rho\xi_1,-\xi_1\rangle\over|x-\rho\xi_1|}dx+h.o.t.\\
 &=k\int_{ \mathbb R^3}(1-V(|y+\rho\xi_1|))U(y) U'(y) \(-{y_1\over|y|}\)dy+h.o.t.\\
 &=-k\int_{ \mathbb R^3}(1-V(|y+\rho\xi_1|))\frac 12\partial_{y_1}U^2(y)dy+h.o.t.\\
&=-\frac k 2 \intR  \partial_{y_1}V(|y+\rho\xi_1 |) U^2(y)dy+h.o.t.\\
&=\frac{k}{2}\frac{\nu  }{\rho^{\nu+1}}{\mathfrak v}_{\infty}  \intR U^2(y)dy +h.o.t.\\
\end{aligned}$$
because by \eqref{ipo-v} $$\partial_{x_1}V(|x|)=-\mathfrak v_\infty \nu\frac1{|x|^{\nu+1}}{x_1\over |x|}+\mathcal O\(\frac{1}{|x|^{\nu+1+\epsilon}}\).$$

 \item{\em Estimate of $I_2$.}
 
 By Lemma \ref{ruiz}, Lemma \ref{new} and Lemma \ref{chiave}
$$\begin{aligned} I_2&=k\int_{ \Sigma}\(\(U_1+\sum_{h=2}^kU_h\)^3-U_1^3-\sum_{h=2}^k U_h^3\)\(\partial_\rho U_1+\sum\limits_{i=2}^k \partial_\rho U_i\)\\
&= 3k\sum_{h=2}^k\int_{ \Sigma} U_1^2    U_h\partial_\rho U_1+h.o.t.\\ &=3 k \sum_{h=2} ^k\int_{ \Sigma} U^2(x-\rho\xi_1) U'(x-\rho\xi_1 ) {\langle x-\rho\xi_1,-\xi_1\rangle\over|x-\rho\xi_1|}   U (x-\rho\xi_h)dx+h.o.t.\\
&=  k \sum_{h=2} ^k  \intR \underbrace{3U^2(x+\rho(\xi_h- \xi_1)) U'(x+\rho(\xi_h- \xi_1)  ) {\langle x+\rho(\xi_h- \xi_1),-\xi_1\rangle\over|x+\rho(\xi_h- \xi_1) |} }_{=-\langle \nabla_\zeta \Gamma_{31}(\zeta),\xi_1\rangle,\ \zeta=\rho\(\xi_h-\xi_1 \)}
 U (x)dx+h.o.t.\\
&=   - \mathfrak c k \sum_{h=2}^k  \left\langle{\xi_1 -\xi_h \over|\xi_1 -\xi_h |},\xi_1\right\rangle 
{e^{-\rho|\xi_1 -\xi_h |}\over\(\rho|\xi_1 -\xi_h |\)^2 }+h.o.t.\\
&= - \frac1 2\mathfrak c  k{e^{-2\rho\frac\pi k}\over \rho} +h.o.t.\end{aligned}$$ 
because of \eqref{min1} and the fact that
\beq\label{fino1}|\xi_2-\xi_1|\sim {2\pi\over k}d\ \hbox{and}\ \langle\xi_1 -\xi_2,\xi_1\rangle=1-\cos{2\pi\over k} \sim  \frac12 \(2\pi\over k\)^2.\eeq

\item{\em Estimate of $I_3$.}
 The   term $I_3$ is estimated in Lemma \ref{gamma}.
\item{\em Estimate of $I_4$.}
We know that $\Psi_\rho=\beta Y_\rho$ where $Y_\rho$ solves \eqref{refin1} 
$$\begin{aligned}I_4&=\beta^2\(\intR 3 W_\rho Y_\rho^2\partial_\rho W_\rho+\intR\beta W_\rho\sum_{i=2}^d     Y^2_\rho(\hat\Theta_i x)\partial_\rho W_\rho\)\lesssim \beta^2 k{e^{-(1-\alpha)\frac{4\pi\rho}{dk}}}\end{aligned}$$
because  $\|Y_\rho\|_*\lesssim {e^{-(1-\alpha)\frac{4\pi\rho}{dk}}}$  (see \eqref{refin4}).
  
\item{\em Estimate of $I_5$.}
We observe that
$$|\Psi _\rho(\hat\Theta_i x)|\le \(\sum\limits_{\ell=1}^d\sum\limits_{j=1}^ke^{-\alpha|\hat\Theta_i x-\rho\eta_{\ell j}|}\)\|\Psi_\rho\|_{*}=\(\sum\limits_{\ell=1}^d\sum\limits_{j=1}^ke^{-\alpha|  x-\rho\eta_{\ell j}|}\)\|\Psi_\rho\|_{*}.$$
Therefore, since $|\partial_\rho W_\rho|\lesssim W_\rho$ by \eqref{inter2}
$$\begin{aligned}I_5&\lesssim \intR2  |\Psi_\rho| W_\rho \sum_{i=2}^d   W_\rho(\hat\Theta_i x) |\Psi_\rho(\hat\Theta_i x) |\\ &
\lesssim k\|\Psi_\rho\| ^2_{*}  \underbrace{\left\|W_\rho \sum_{i=2}^d   W_\rho(\hat\Theta_i x) \(\sum\limits_{\ell=1}^d\sum\limits_{j=1}^ke^{-\alpha|x-\rho\eta_{\ell j}|}\)\right\|_{*} }_{=\left\|W_\rho \sum_{i=2}^d   W_\rho(\hat\Theta_i x) |\right\|_{L^\infty(\mathbb R^n)}}\\
&\lesssim k{e^{-(1-\alpha)\frac{4\pi\rho}{dk}}}e^{-(1-\alpha)\frac{2\pi\rho}{kd}}\lesssim  k{e^{-(1-\alpha)\frac{6\pi\rho}{dk}}}.\end{aligned} $$
\item{\em Estimate of $I_6$.}
Using again that $|\partial_\rho W_\rho|\lesssim 1$, by \eqref{refin4}
$$I_6\lesssim k\|\Psi_\rho\|^3_{*}\lesssim k{e^{-(1-\alpha)\frac{6\pi\rho}{dk}}}.$$
 \end{itemize}

Next, by \eqref{qua}  
$$\intR \mathcal N(\Phi)\partial_\rho W_\rho\lesssim k \( \|\Phi\|^3_{*}+\|\Phi\|^2_{*}+\|\Psi_\rho\| _{*}\|\Phi\| _{*}\)\lesssim  k{e^{-(1-\alpha)\frac{6\pi\rho}{dk}}}.$$  
Finally, it only remains to estimate (see \eqref{lin})
$$\begin{aligned}\intR \mathcal L(\Phi)\partial_\rho W_\rho&=\underbrace{\intR\(V(x)-1\)\Phi \partial_\rho W_\rho}_{:=L_1}+\underbrace{\intR\(-\Delta\Phi+\Phi-3W_\rho^2\Phi \)\partial_\rho W_\rho}_{:=L_2}\\
&\underbrace{-\intR \beta \Phi\sum_{i=2}^d W^2_\rho(\hat\Theta_i x) \partial_\rho W_\rho-2\intR \beta W_\rho\sum_{i=2}^d W _\rho(\hat\Theta_i x)\Phi (\hat\Theta_i x) \partial_\rho W_\rho}_{:=L_3}.\end{aligned}$$
\begin{itemize}
\item{\em Estimate of $L_1$.}   Since $|\partial_\rho W_\rho|\lesssim  W_\rho$ {and since $\nu>1$}
$$\intR\( V(x)-1\)\Phi \partial_\rho W_\rho\lesssim k\|\( V(x)-1\)  W_\rho\|_{*}\|\Phi \|_{*}\lesssim k\frac 1{\rho^\nu}\|\Phi\|_{*}.$$
\item{\em Estimate of $L_2$.} Remind that $$\partial_\rho W_\rho =\sum_{h=1}^k \partial_\rho U_h =\sum_{h=1}^k \langle\nabla U_h, (-\xi_h) \rangle$$
 and also
$$-\Delta\partial _i U_h+\partial _iU_h=  \partial _iU_h^3\quad \Rightarrow\quad -\Delta\partial_\rho W_\rho+\partial_\rho W_\rho=\sum_{h=1}^k3U_h^2\langle\nabla U_h, (-\xi_h) \rangle $$

and so $$\begin{aligned}L_2&= \intR\(\sum_{h=1}^k3U_h^2\langle\nabla U_h, (-\xi_h) \rangle- 3 W^2_\rho\partial_\rho W_\rho \)\Phi\\ &\lesssim k\|\Phi\|_{*}\left\|\sum_{h=1}^k3U_h^2\langle\nabla U_h, (-\xi_h) \rangle- 3 W^2_\rho\partial _\rho W_\rho \right\|_{*}\\
&\lesssim k\|\Phi\|_{*}{e^{-\frac{2\pi\rho}k}}\lesssim  k{e^{-(1-\alpha)\frac{6\pi\rho}{dk}}}.\end{aligned}
$$
Indeed, if $x\in \Sigma$ then there is $\ell\in I$ such that $x\in \widetilde\Sigma_\ell$ (taking into account that $|\langle\nabla U_h, (-\xi_h) \rangle|\lesssim U_h$) arguing exactly as in the estimate of the term $E_2$ in Lemma \ref{errore} we get
$$\begin{aligned}&{e^{\alpha|x-\rho\tilde\eta_\ell|}\(\sum_{h=1}^k3U_h^2\langle\nabla U_h, (-\xi_h) \rangle- 3 W^2_\rho\partial _\rho W_\rho\)}\\ &
={e^{\alpha|x-\rho\tilde\eta_\ell|}\left[3U_1^2\langle\nabla U_1, (-\xi_1) \rangle+\sum_{h=2}^k3U_h^2(x)\langle\nabla U_h(x), (-\xi_h) \rangle\right.}\\ &\quad{\left.- 3 \(U_1+\sum_{h=2}^kU_h\)^2\( \langle\nabla U_1, (-\xi_1)+\sum_{h=2}^k \langle\nabla U_h, (-\xi_h)\rangle\)  \right]}\\
&{\lesssim e^{\alpha|x-\rho\tilde\eta_\ell|}\(U_1^2\sum_{h=2}^kU_h+U_1 \(\sum_{h=2}^kU_h\)^2\)\lesssim e^{-\frac{2\pi\rho}k}.}
\end{aligned}
$$
\item{\em Estimate of $L_3$.}  Since $\partial_\rho W_\rho\lesssim  W_\rho$ by Lemma \ref{inter}
$$  L_3\lesssim 
k\|\Phi\| _{*}\(\| W_\rho\sum_{i=2}^d W^2_\rho(\hat\Theta_i x)\|_{*}+ \| W_\rho\sum_{i=2}^d W_\rho(\hat\Theta_i x)\|_{*}\)\lesssim  k{e^{-(1-\alpha)\frac{6\pi\rho}{dk}}}.$$
\end{itemize}

 We combine all the previous estimates with the size of  $\Phi$  \eqref{size} and the size of $\Psi_\rho$ in \eqref{refin4}  and we get the claim.

\end{proof}

\subsection{The proof of Theorem \ref{main}: completed}\label{9}
We  have to find $\rho=\rho(k)$ such that $\mathfrak C_k(\rho )=0$ (see \eqref{finale}). 
We only consider the case $n=3.$ When $n=2$ we argue in a similar way.
Now,  by Lemma \ref{crho}
$$\begin{aligned}\mathfrak C_k(\rho)&=\mathfrak v_\infty\mathtt A  \frac{k}{\rho^{\nu+1}}(1+o(1)) -    \mathtt B \frac k\rho  e^{-2\rho\frac{\pi}{ k}}     (1+o(1)) - \beta   \mathtt C\(\frac k\rho\)^2  e^{-4\rho\frac{\pi}{d k}}  \ln\ln k (1+o(1))\\ &+\beta^2 \Upsilon (k,\rho) .
\end{aligned}$$
If $d=3$ and $ \nu >2$ or $d\ge4$ and $  \nu >1$ we  choose 
\beq\label{betak0}|\beta|\lesssim \beta_k:=\frac1{k^b}\ \hbox{with}\ 1<b<\nu {d-2\over2}\eeq
such that 
\beq\label{betak}\beta^2\Upsilon(k,\rho),\ \frac k\rho  e^{-2\rho\frac{\pi}{ k}}    =o\(\beta\(\frac k\rho\)^2  e^{-4\rho\frac{\pi}{d k}}  \ln\ln k \)\
\eeq
and so $\mathfrak C_k(\rho )$ reduces to
$$\begin{aligned}\mathfrak C_k(\rho)&=\mathfrak v_\infty\mathtt A  \frac{k}{\rho^{\nu+1}}(1+o(1))   - \beta   \mathtt C\(\frac k\rho\)^2  e^{-4\rho\frac{\pi}{d k}}  \ln\ln k (1+o(1)).
\end{aligned}$$
Therefore,   if $\rho=rk\ln k$  we compute  
$$\mathfrak C_k(rk\ln k)=\mathfrak v_\infty\mathtt A  \frac{1}{r^{\nu+1}(\ln k)^{\nu+1}}\frac1{k^\nu}(1+o(1))-  \beta\mathtt C \frac {\ln\ln k}{r^2(\ln k)^2} \frac 1{k^{{4\pi\over d} r}} (1+o(1))$$
and
$$\mathfrak C_k(rk\ln k)<0 \ \hbox{if}\ r<{d\nu\over 4\pi}\ \hbox{and}\
\mathfrak C_k(rk\ln k)>0 \ \hbox{if}\ r>{d\nu\over 4\pi}.$$
Therefore, if $\mathfrak v_\infty$ and $\beta$ have the same sign, for any $\epsilon>0$ there exists $ r(k)\in\({d\nu\over 4\pi}-\epsilon,{d\nu\over 4\pi}+\epsilon\),$  such that  $\mathfrak C_k(r(k)k\ln k)=0$ and the claim is proved.\\
It only remains to prove that \eqref{betak} holds for any $\rho=rk\ln k$ and $ r\in\({d\nu\over 4\pi}-\epsilon,{d\nu\over 4\pi}+\epsilon\)$ for some $\epsilon>0$ small enough.
We have
$$ {\beta^2\Upsilon(k,\rho)\over\beta\(\frac k\rho\)^2  e^{-4\rho\frac{\pi}{d k}}  \ln\ln k}\lesssim \beta k e^{\alpha\frac{4\pi\rho}{d k}}\lesssim 
k^{1-b+\alpha r\frac{4\pi}d}=o(1)\ \hbox{if}\ b>1$$
and
$$ {\frac k\rho  e^{-2\rho\frac{\pi}{ k}}  \over\beta\(\frac k\rho\)^2  e^{-4\rho\frac{\pi}{d k}}  \ln\ln k}\lesssim \frac1\beta  e^{ \frac{4\pi\rho}{d k}-\frac{2\pi\rho}k}\lesssim 
k^{b+  r\(\frac{4\pi}d-2\pi\)}=o(1)\ \hbox{if}\ b+\nu\(1-\frac d2\)<0.$$

Finally, the positivity of the solutions can be proved arguing as in \cite{PW} since $\beta$ is small. That completes the proof.

\section{Proof of Theorem \ref{main1}}\label{rela}
 
We consider the more general system
\begin{equation}\label{gen-sis}-\Delta u_i+V (x) u_i=|u_i|^{p-1}u_i+\beta |u_i|^{q-1}u_i\sum\limits_{j=1\atop j\not=i}^d |u_j|^{r-1}u_j\ \hbox{in}\ \mathbb R^n,\ i=1,\dots,d\end{equation}
with $p\in\(1,{n+2\over n-2}\),$ if $n\ge3$ or $p>1$ if $n=1,2$ and $q,r>1.$
Arguing as above we look for a solution to \eqref{gen-sis} as $\mf u=(u_1, \ldots, u_d)\in (H^1(\mathbb R^n))^d$ whose components satisfy
\eqref{comps} and    $u$ solves the non-local equation
\beq\label{gen-rid}
-\Delta u+V(x)u=|u|^{p-1}u+\beta |u|^{q-1}u \sum_{i=2}^d (|u|^{r-1}u)(\hat\Theta_i x), \quad\mbox{in}\,\, \mathbb R^n.\eeq
We build a  solution to \eqref{gen-rid} as 
$u=W_\rho +\Phi,$
where $W_\rho$ is defined in \eqref{ansatz}. In this case $U$  (see also \eqref{groundstate}) is the unique positive  radial solution to
$$-\Delta U+U=U^p\ \hbox{in}\ \mathbb R^n.$$
 The higher order term $\Phi\in \mathscr H $ satisfies the orthogonality condition 
\eqref{orto}.
We follow the same strategy developed in the previous sections.  Let us point out that we are not refining the ansatz as in the previous case.
This will create some constraints on the choice of the exponents $p,q,r$ and the number of equations $d$. We will focus on these constraints.\\

First, we write the non-local equation \eqref{gen-rid} in terms of $\Phi$, i.e.
$$\mathcal L(\Phi)=\mathcal N(\Phi)+\mathcal E\ \hbox{in}\ \mathbb R^n,$$
where 
the linear operator $\mathcal L$ is defined by
$$
\mathcal L(\Phi)=-\Delta\Phi+ \Phi-pW_\rho^{p-1}\Phi,$$
the error term $\mathcal E$ is defined by
\beq\label{err2}\begin{aligned}
\mathcal E:=&(1-V(x)) W_\rho +\Delta W_\rho-W_\rho+W_\rho^p+\beta W_\rho^q\sum_{i=2}^d  W_\rho^r(\hat\Theta_i x)  \\
\end{aligned}\eeq
and the higher order term $\mathcal N(\Phi)$ is defined by
$$\begin{aligned}
\mathcal N(\Phi)&:=|W_\rho+\Phi|^p-W_\rho^p-pW_\rho^{p-1}\Phi+\(V(x)-1\)\Phi\\
&+\beta\( |W_\rho+\Phi|^q\sum_{i=2}^d   |W_\rho+\Phi|^r(\hat\Theta_i x) -  W_\rho^q\sum_{i=2}^d  W_\rho^r(\hat\Theta_i x)\) .\end{aligned}$$
First of all, we invert the linear operator $\mathcal L$ in the Banach space  introduced in \cite[Section 3]{mpw}
 $$\mathscr B_{*}:=\{h\in L^\infty(\mathbb R^n)\,\,:\,\, \|h\|_{*}<+\infty\},\ \|h\|_{*}:=\sup_{x\in\mathbb R^n}\left(\sum_{j=1}^k e^{-\alpha|x-\rho\xi_j|}\right)^{-1}|h(x)|.$$ 
Here the $\xi_j$'s are defined in \eqref{points}
and  $\alpha\in(0, 1).$ We point out that in this case it is not necessary to refine the choice of the weight in the norm adding the points $\eta_{ij}$'s.
 \begin{proposition}\label{inv2}
There exist $k_0>0$   such that for any $k\ge k_0$,  $\rho\in\mathcal D_k$ and   $ h \in \mathscr B_{*}$ which satisfy \eqref{pari} and \eqref{rot}, the linear problem 
$$  \mathcal L(\Phi)= h +\mathfrak c \partial_\rho W_\rho\ \hbox{in}\  \mathbb R^n\ \hbox{and}\ \int\limits_{\mathbb R^n} \partial_\rho W_\rho \Phi=0$$
admits a unique solution $\Phi=\Phi_{\rho, k}\in\mathscr H\cap\mathscr B_{*}$ and $\mathfrak c=\mathfrak c(\rho,k)\in\mathbb R$ such that
$$
\|\Phi\|_{*}\lesssim \| h \|_{*}\quad\mbox{and}\quad |\mathfrak c|\lesssim\| h \|_{*}.$$
\end{proposition}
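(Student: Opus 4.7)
The plan is to adapt the standard Lyapunov--Schmidt framework used for Proposition \ref{inv} (and detailed in \cite[Section 3]{dwy}), exploiting a crucial simplification: since the linear operator $\mathcal L=-\Delta+1-pW_\rho^{p-1}$ contains no nonlocal coupling, the weighted norm $\|\cdot\|_*$ only needs to register the $k$ peaks $\rho\xi_j$ of a single bubble configuration rather than all the $dk$ points $\rho\eta_{ij}$. The argument proceeds in two parts: first establish the a priori bound $\|\Phi\|_*+|\mathfrak c|\lesssim\|h\|_*$, then deduce existence and uniqueness via the Fredholm alternative after projecting onto the $L^2$-orthogonal complement of $\partial_\rho W_\rho$.

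For the a priori estimate I would argue by contradiction. Suppose there are sequences $k_m\to\infty$, $\rho_m\in\mathcal D_{k_m}$ and data $h_m\in\mathscr B_*$ with $\|h_m\|_*\to0$, and solutions $(\Phi_m,\mathfrak c_m)$ satisfying $\|\Phi_m\|_*=1$. Testing the equation against $\partial_\rho W_\rho$ and using the identity $\mathcal L(\partial_\rho U_h)=p(U_h^{p-1}-W_\rho^{p-1})\partial_\rho U_h$ together with $\|\partial_\rho W_\rho\|_{L^2}^2\sim k$ and the standard bubble-interaction estimate $\|\mathcal L(\partial_\rho W_\rho)\|_{L^1}=o(k)$, I obtain $|\mathfrak c_m|\lesssim \|h_m\|_*+o(1)\|\Phi_m\|_*=o(1)$. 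Near each peak I set $\tilde\Phi_m^j(y):=\Phi_m(y+\rho_m\xi_j)$; elliptic regularity and the uniform $L^\infty$ bound implied by $\|\Phi_m\|_*=1$ yield, along a subsequence, a limit $\Psi\in L^\infty(\mathbb R^n)$ solving $-\Delta\Psi+\Psi=pU^{p-1}\Psi$ in $\mathbb R^n$. By the non-degeneracy of $U$, $\Psi\in\mathrm{span}\{\partial_{y_1}U,\dots,\partial_{y_n}U\}$; the symmetries \eqref{pari} and \eqref{rot} inherited by $\Phi_m$ force $\Psi$ to be even in $y_2,\dots,y_n$, so $\Psi=c\,\partial_{y_1}U$; and since $\partial_\rho W_\rho\approx -\partial_{y_1}U$ near $\rho_m\xi_1$, the orthogonality $\int\partial_\rho W_\rho\,\Phi_m=0$ forces $c=0$. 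Hence $\Phi_m\to 0$ uniformly on every fixed ball centred at a peak.

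To convert the local vanishing into a contradiction with $\|\Phi_m\|_*=1$, I would use the barrier $\sigma(x):=\sum_{j=1}^k e^{-\alpha|x-\rho_m\xi_j|}$, which satisfies $\mathcal L(\sigma)\ge c_0\sigma$ outside fixed neighbourhoods of the peaks for some $c_0>0$, because $pW_\rho^{p-1}$ decays exponentially away from them. Maximum-principle comparison then yields $|\Phi_m(x)|/\sigma(x)\lesssim \|h_m\|_*+\max_j\sup_{B(\rho_m\xi_j,R)}|\Phi_m|/\sigma$, and both terms on the right tend to zero as $m\to\infty$, contradicting $\|\Phi_m\|_*=1$. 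Once the a priori estimate is in hand, existence and uniqueness follow from a standard Fredholm argument in $\mathscr H\cap\mathscr B_*\cap\{\Phi:\int\partial_\rho W_\rho\,\Phi=0\}$. The main technical obstacle, as is typical for this kind of construction, lies in the careful inner/outer matching within the weighted norm; here it is appreciably simpler than in Proposition \ref{inv} because only one family of peaks enters the weight and the nonlocal couplings appear only in the right-hand side $\mathcal N(\Phi)+\mathcal E$.
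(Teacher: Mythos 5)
Your proposal is correct and follows essentially the same strategy as the paper's proof of Proposition \ref{inv} in Appendix \ref{app2} (which in turn follows \cite{mpw,dwy}): a contradiction argument combining a barrier/comparison estimate away from the peaks with an Ascoli--Arzel\`a blow-up argument near a peak invoking the nondegeneracy of $U$, the symmetries \eqref{pari}--\eqref{rot}, and the orthogonality constraint, after which existence follows by Fredholm alternative. You also correctly identify the simplification relative to Proposition \ref{inv}: since $\mathcal L=-\Delta+1-pW_\rho^{p-1}$ carries no nonlocal coupling, the weight needs only the $k$ points $\rho\xi_j$ rather than all $dk$ points $\rho\eta_{ij}$, the only blow-up limit is $-\Delta\Psi+\Psi=pU^{p-1}\Psi$, and no restriction $\beta\notin\{\Lambda_\kappa\}$ is required.
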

Next, we estimate  the error
\eqref{err2}.

\begin{lemma} 
It holds true that
$$\|\mathcal E\|_{*}\lesssim \frac{1}{\rho^\nu}+  e^{ -\min\{ 1,(p-1-\alpha)\}\frac{2\pi\rho}{k}} +e^{ -(\min\{   q,r\}-\alpha)\frac{2\pi\rho}{dk}}.$$

\end{lemma}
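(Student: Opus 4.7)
The plan is to decompose $\mathcal E=E_1+E_2+E_3$ according to the three terms on the right-hand side of the claimed bound, namely
\begin{equation*}
E_1:=(1-V(x))W_\rho,\quad E_2:=\Delta W_\rho-W_\rho+W_\rho^p,\quad E_3:=\beta W_\rho^q\sum_{i=2}^d W_\rho^r(\hat\Theta_i\cdot),
\end{equation*}
and bound each of the three pieces in the weighted norm $\|\cdot\|_*$ separately, closely following the argument already carried out in Lemma \ref{errore} for the cubic case. The triangle inequality then gives the conclusion.

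The estimate $\|E_1\|_*\lesssim 1/\rho^\nu$ is literally the one obtained for the corresponding term in Lemma \ref{errore}: split $\mathbb R^n$ into $\{|x|<\rho/2\}$, where the factor $W_\rho$ is exponentially small compared with the weight and $V$ is merely bounded, and $\{|x|\ge\rho/2\}$, where the expansion \eqref{ipo-v} yields $|V(x)-1|\lesssim 1/\rho^\nu$; the geometric argument involving the decomposition into subsectors is unchanged.

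For $E_2$ I would exploit the single-bubble identity $-\Delta U_h+U_h=U_h^p$ to rewrite $E_2=(\sum_h U_h)^p-\sum_h U_h^p$, and then use an elementary Taylor-type inequality (for $a,b\ge 0$ and $p>1$ one has $(a+b)^p-a^p-b^p\lesssim a^{p-1}b+ab^{p-1}$, iterated over the $k$ bubbles) to bound $|E_2|$ pointwise by cross products $U_h^{p-1}U_\ell$ with $h\neq\ell$. In the fundamental sector $\Sigma$ the leading contribution is $U_1^{p-1}\sum_{h\ge 2}U_h$; dividing by the weight $e^{-\alpha|x-\rho\xi_1|}$, applying the elementary inequality $a|x-A|+b|x-B|\ge \min(a,b)|A-B|$ and using Lemma \ref{chiave} together with \eqref{min1} produces the factor $e^{-\min\{1,p-1-\alpha\}\frac{2\pi\rho}k}$, provided $\alpha\in(0,1)$ is chosen small enough that $p-1-\alpha>0$.

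For the coupling term $E_3$ I would adapt the proof of Lemma \ref{inter}, now with the exponents $1$ and $2$ replaced by $q$ and $r$. In the sector $\Sigma$ the bubble $U_1$ dominates $W_\rho(x)$, while among the translates $U(x-\rho\eta_{ij})$ making up $W_\rho(\hat\Theta_i x)$ the one whose peak is nearest to $\xi_1$ dominates; by \eqref{min2} this nearest peak sits at distance $2\sin(\pi/(dk))\sim 2\pi/(dk)$. After factoring out the weight $e^{-\alpha|x-\rho\xi_1|}$, applying once more the inequality $a|x-A|+b|x-B|\ge \min(a,b)|A-B|$ yields the exponent $\min\{q,r\}-\alpha$ together with the distance $2\pi\rho/(dk)$, so that $\|E_3\|_*\lesssim e^{-(\min\{q,r\}-\alpha)\frac{2\pi\rho}{dk}}$. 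The main technical obstacle I foresee is precisely the bookkeeping needed to show that all remaining cross contributions produced by expanding $W_\rho^q(x)W_\rho^r(\hat\Theta_i x)$ are strictly smaller than the leading pair identified above, but this follows from the same case analysis already performed in Lemma \ref{inter} for $q=1,r=2$, together with the choice of $\alpha$ small enough that simultaneously $p-1-\alpha>0$ and $\min\{q,r\}-\alpha>0$.
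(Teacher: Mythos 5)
Your decomposition $\mathcal E=E_1+E_2+E_3$ and the treatment of each piece (reuse the Lemma \ref{errore} argument for $E_1$ and $E_2$, adapt the Lemma \ref{inter} sector estimate with exponents $q,r$ for $E_3$) is exactly the paper's proof, which simply writes ``$E_1$ and $E_2$ can be estimated as in Lemma \ref{errore}'' and then carries out the $E_3$ computation you describe, splitting into the two cases $q\le r$ and $q>r$ rather than invoking $a|x-A|+b|x-B|\ge\min(a,b)|A-B|$ in one stroke. The only very minor imprecision in your sketch is calling $U_1^{p-1}\sum_{h\ge2}U_h$ ``the'' leading cross-term for all $p>1$: for $1<p<2$ it is $U_1\sum_{h\ge2}U_h^{p-1}$ that dominates in $\Sigma$, but since that term yields the even stronger exponent $\min\{1-\alpha,p-1\}\ge p-1-\alpha$ the claimed bound $e^{-\min\{1,p-1-\alpha\}\frac{2\pi\rho}{k}}$ still follows, and in the regime $p>3$ actually used in Theorem \ref{main1} your identification is correct.
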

\begin{proof}
We know that
$$\begin{aligned} 
\|\mathcal E\|_{*}&\lesssim \underbrace{\|\(1-V(x)\)W_\rho  \|_{*}}_{:=E_1 } +\underbrace{\|\Delta W_\rho-W_\rho +W_\rho^p\|_{*}}_{:=E_2}+\underbrace{ \|W_\rho^q\sum_{i=2}^d W^r_\rho(\hat\Theta_i x)\|_{*}}_{:=E_3}.
\end{aligned}$$
$E_1$ and $E_2$ can be estimated as in Lemma \ref{errore}. Let us estimate the term $E_3.$
If $x\in \Sigma$ we have
$$\begin{aligned}  e^{\alpha|x-\rho\xi_1|} W_\rho^q(x)\sum_{\ell= 2}^dW^r_\rho(\hat\Theta_\ell x)&\lesssim 
e^{\alpha|x-\rho\xi_1|} U ^q(x-\rho\xi_1) \sum\limits_{\ell\in I\atop\ell\not=0} U^r(x-\rho\tilde \eta_\ell)\\ &
\lesssim
 \sum\limits_{\ell\in I\atop\ell\not=0}  e^{-(q-\alpha)|x-\rho\xi_1|-r|x-\rho\tilde\eta_\ell|}\\ &\lesssim e^{- (q-\alpha)\frac{2\pi\rho}{dk}},
\\ &
\end{aligned}$$
because  
$$\begin{aligned}(q-\alpha) |x-\rho\xi_1|+r|x-\rho\tilde \eta_\ell|&\ge (q-\alpha)\rho|\tilde\eta_0-\tilde\eta_\ell|+(r-q+\alpha)|x-\rho\tilde \eta_\ell|\\ &\ge (q-\alpha)\rho\sin{2\pi\over dk}\ \hbox{if}\ q\le r\end{aligned} $$
and
$$\begin{aligned}(q-\alpha) |x-\rho\xi_1|+r|x-\rho\tilde \eta_\ell|&\ge r\rho|\tilde\eta_0-\tilde\eta_\ell|+(q-\alpha-r)|x-\rho\tilde \eta_\ell|\\ &\ge r\rho\sin{2\pi\over dk}
\ \hbox{if}\ q> r.\end{aligned} $$
Then the estimate
$$\|E_3\|_{*} \lesssim \left\{\begin{aligned}&  e^{-(q-\alpha)\frac{2\pi\rho}{d k}}\ \hbox{if}\ q\le r\\ 
& e^{-r\frac{2\pi\rho}{d k}}\ \hbox{if}\ q>r\end{aligned}\right.$$
follows.

\end{proof}
Now, we use a standard contraction mapping argument to solve the  intermediate non-linear problem  \eqref{pbint}.

\begin{proposition}
  There exists $k_0$ such that for any $k\ge k_0$ and for any $\rho\in\mathcal D_k$, there is a unique  $ (\Phi,\mathfrak c)\in\mathscr H \times \mathbb R$  which  solves \eqref{pbint}. Moreover
\beq
   \|\Phi \|_{*}\lesssim   \frac{1}{\rho^\nu}+  e^{ -\frac{2\pi\rho}{k}\sigma} , \ \sigma:=\min\left\{ 1,(p-1-\alpha),\frac{q-\alpha}d,\frac{r-\alpha}d\right\}.\label{resto2}\eeq
\end{proposition}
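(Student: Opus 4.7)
The strategy is a standard Banach contraction argument, parallel to Proposition \ref{resto} but now invoking Proposition \ref{inv2} and, crucially, without any correction term $\Psi_\rho$. I would rewrite \eqref{pbint} as the fixed point equation $\Phi = \mathcal T(\Phi)$ with
\[
\mathcal T(\Phi) := -\mathcal L^{-1}\bigl(\mathcal N(\Phi) + \mathcal E\bigr),
\]
and look for the fixed point in the closed ball
\[
\mathcal B_k := \left\{\Phi \in \mathscr H \cap \mathscr B_{*} \,:\, \|\Phi\|_* \le R\(\tfrac{1}{\rho^\nu} + e^{-\frac{2\pi\rho}{k}\sigma}\)\right\}
\]
for a sufficiently large constant $R>0$. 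Proposition \ref{inv2} yields
\[
\|\mathcal T(\Phi)\|_* \lesssim \|\mathcal N(\Phi)\|_* + \|\mathcal E\|_*, \qquad \|\mathcal T(\Phi_1)-\mathcal T(\Phi_2)\|_* \lesssim \|\mathcal N(\Phi_1)-\mathcal N(\Phi_2)\|_*,
\]
and the size of $\|\mathcal E\|_*$ has already been matched to the radius of $\mathcal B_k$ by the previous lemma, so the remaining task is to control $\mathcal N(\Phi)$ on $\mathcal B_k$.

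I would split $\mathcal N(\Phi)$ into three blocks. The pure nonlinearity $|W_\rho+\Phi|^p - W_\rho^p - pW_\rho^{p-1}\Phi$ is handled by the standard pointwise bound
\[
\bigl||a+b|^p - |a|^p - p|a|^{p-1}b\bigr| \lesssim |a|^{(p-2)_+}\,b^2 + |b|^{\min\{p,2\}},
\]
which together with $W_\rho \lesssim 1$ and the compatibility of $\|\cdot\|_*$ with products gives a contribution of order $\|\Phi\|_*^{\min\{p,2\}}$. The potential piece $(V(x)-1)\Phi$ contributes $\rho^{-\nu}\|\Phi\|_*$. The coupling block
\[
\beta\left[|W_\rho+\Phi|^q\sum_{i=2}^d|W_\rho+\Phi|^r(\hat\Theta_i x) - W_\rho^q\sum_{i=2}^d W_\rho^r(\hat\Theta_i x)\right]
\]
is expanded via the same Taylor-type inequalities; every nontrivial term carries at least one factor of $\Phi$ (or $\Phi(\hat\Theta_i\cdot)$) together with the cross-component exponential decay $e^{-(\min\{q,r\}-\alpha)\frac{2\pi\rho}{dk}}$, extracted exactly as in the estimate of $E_3$ in the previous lemma, working sector by sector over the $\widetilde\Sigma_\ell$. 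Summing,
\[
\|\mathcal N(\Phi)\|_* \lesssim \frac{\|\Phi\|_*}{\rho^\nu} + \|\Phi\|_*^{\min\{p,2\}} + e^{-\frac{2\pi\rho}{k}\sigma}\|\Phi\|_*,
\]
which on $\mathcal B_k$ is $o(1)\|\Phi\|_*$ for $k$ large. Choosing $R$ greater than twice the implicit constant in the bound for $\|\mathcal E\|_*$ then gives $\mathcal T(\mathcal B_k)\subset\mathcal B_k$.

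Applying the same pointwise inequalities to the differences $\mathcal N(\Phi_1)-\mathcal N(\Phi_2)$ produces a bound of type $o(1)\|\Phi_1-\Phi_2\|_*$ on $\mathcal B_k$, so $\mathcal T$ is a contraction and the Banach fixed point theorem delivers a unique $(\Phi,\mathfrak c)\in\mathscr H\times\mathbb R$ solving \eqref{pbint}; the estimate \eqref{resto2} is encoded in the radius of $\mathcal B_k$. The main technical delicacy lies in the coupling block: the non-integer exponents $q,r$ prevent any algebraic (polynomial) expansion, so one must rely on the pointwise Taylor-type inequalities, and one must carry out the sector decomposition used for $E_3$ in order to genuinely pick up the cross-lattice gain $e^{-(\min\{q,r\}-\alpha)\frac{2\pi\rho}{dk}}$ from the factors sitting on rotated bubbles. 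Once this is in hand, the rest is routine.
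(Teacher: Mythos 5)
Your overall strategy is exactly the one the paper intends: the statement is a carbon copy of Proposition~\ref{resto}, the paper explicitly says it is proved by ``a standard contraction mapping argument,'' and your reconstruction (define $\mathcal T=-\mathcal L^{-1}(\mathcal N(\Phi)+\mathcal E)$ on a ball whose radius is calibrated to $\|\mathcal E\|_*$, invoke Proposition~\ref{inv2}, expand $\mathcal N$ via Taylor-type pointwise inequalities and the sector decomposition to pick up the cross-component exponential gain) is precisely what that argument amounts to. The handling of the non-integer powers $p,q,r$ and the role of the $\widetilde\Sigma_\ell$-sectors are correctly identified as the only nontrivial ingredients.

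There is, however, one step that as written would not close. You claim that the piece $(V(x)-1)\Phi$ of $\mathcal N(\Phi)$ contributes $\rho^{-\nu}\|\Phi\|_*$, but this bound is not available for a generic $\Phi\in\mathscr B_*$. The reason the analogous estimate of $E_1=\|(1-V)W_\rho\|_*$ in Lemma~\ref{errore} goes through is that $W_\rho$ decays at rate $1$ while the weight in $\|\cdot\|_*$ decays only at rate $\alpha<1$, so in the region $|x|<\rho/2$ one gains an extra factor $e^{-(1-\alpha)\rho/2}$. By contrast, $\Phi$ is only controlled at the rate $\alpha$ (that is what $\|\Phi\|_*$ measures), so $e^{\alpha|x-\rho\tilde\eta_\ell|}|\Phi(x)|\lesssim\|\Phi\|_*$ with no gain near the origin, and since $V-1$ is merely bounded there (assumption \eqref{ipo-v} constrains $V$ only at infinity), the best one gets is $\|(V-1)\Phi\|_*\lesssim\|\Phi\|_*$. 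This does not give a contraction unless $\|V-1\|_\infty$ is assumed small. The clean fix, and almost certainly what the paper means despite the way the Section~\ref{rela} formulas are written, is to keep $V$ inside the linear operator, $\mathcal L(\Phi)=-\Delta\Phi+V(x)\Phi-pW_\rho^{p-1}\Phi$, exactly as in Section~\ref{princi}, so that $(V-1)\Phi$ never appears in $\mathcal N$; indeed, if one traces the identity $u=W_\rho+\Phi$ solves \eqref{gen-rid} with the operator and remainder as literally printed in Section~\ref{rela}, a stray $(V-1)\Phi$ survives, confirming the typo. Once $V$ is absorbed into $\mathcal L$ (which Proposition~\ref{inv2} handles just as Proposition~\ref{inv} did), the rest of your argument goes through verbatim.
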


 Finally, we have to find $\rho$ such that 
 $$\mathfrak C_k(\rho):=\int\limits_{\mathbb R^n}\(\mathcal E+\mathcal N(\Phi)-\mathcal L(\Phi)\)=0.$$
and in the next lemma we estimate $\mathfrak C_k(\rho).$
\begin{lemma}
\label{errore2}
It holds true that
\begin{enumerate}
\item if $q+1\not=r$ 
$$\begin{aligned}\mathfrak C_k(\rho)&=\mathfrak v_\infty\mathtt A_1\frac{k}{\rho^{\nu+1}}(1+o(1))-  \mathtt A_2 \( \frac k\rho \)^{n-1\over2}e^{-
\frac{2\pi\rho} k} (1+o(1))  \\
&-\beta   \mathtt A_3\(\frac k\rho\)^{\min\{q+1,r\}\frac{n-1}2}  e^{-\min\{q+1,r\}\frac{2\pi\rho}{d k}} (1+o(1))+\Xi_k(\rho) ,\end{aligned}$$
\item if $q+1=r>\frac{n+1}{n-1}$
$$\begin{aligned}\mathfrak C_k(\rho)&=\mathfrak v_\infty\mathtt A_1\frac{k}{\rho^{\nu+1}}(1+o(1))-  \mathtt A_2 \( \frac k\rho \)^{n-1\over2}e^{-
\frac{2\pi\rho} k} (1+o(1))  \\
&-\beta   \mathtt A_3\(\frac k\rho\)^{r\frac{n-1}2}  e^{-r\frac{2\pi\rho}{d k}} (1+o(1))+\Xi_k(\rho) ,\end{aligned}$$
\item if $q+1=r=\frac{n+1}{n-1}$
$$\begin{aligned}\mathfrak C_k(\rho)&=\mathfrak v_\infty\mathtt A_1\frac{k}{\rho^{\nu+1}}(1+o(1))-  \mathtt A_2 \( \frac k\rho \)^{n-1\over2}e^{-
\frac{2\pi\rho} k} (1+o(1))  \\
&-\beta   \mathtt A_3\(\frac k\rho\)^{r\frac{n-1}2} e^{-r\frac{2\pi\rho}{d k}}\ln\ln k  (1+o(1)) +\Xi_k(\rho) ,\end{aligned}$$
\item if $q+1=r<\frac{n+1}{n-1}$
$$\begin{aligned}\mathfrak C_k(\rho)&=\mathfrak v_\infty\mathtt A_1\frac{k}{\rho^{\nu+1}}(1+o(1))-  \mathtt A_2 \( \frac k\rho \)^{n-1\over2}e^{-
\frac{2\pi\rho} k} (1+o(1))   \\
&-\beta   \mathtt A_3\(\frac k\rho\)^{r(n-1)+\frac{n-1}2} e^{-r\frac{2\pi\rho}{d k}}   (1+o(1))+\Xi_k(\rho) ,\end{aligned}$$
\end{enumerate}
where the $\mathtt A_i$'s  are positive constants
$$\Xi_k(\rho)=k\(\|\Phi\|_{*}^2+\({1\over\rho^\nu}+e^{- \frac{2\pi\rho}{k}\tau}\) \|\Phi\|_{*}\),\ \tau:=\min\left\{1,p-1-\alpha,\frac{q-1-\alpha}d,\frac{r-1-\alpha}d\right\}$$
and $\Phi$ satisfies \eqref{resto2}.
\end{lemma}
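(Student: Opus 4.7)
The strategy follows the Lyapunov--Schmidt template of the proof of Lemma \ref{crho}. I split
\[
\mathfrak{C}_k(\rho)=\int_{\mathbb{R}^n}\mathcal{E}\,\partial_\rho W_\rho+\int_{\mathbb{R}^n}\mathcal{N}(\Phi)\,\partial_\rho W_\rho-\int_{\mathbb{R}^n}\mathcal{L}(\Phi)\,\partial_\rho W_\rho,
\]
and aim to show that the three explicit leading terms of the statement come from the first integral, while the other two are absorbed into $\Xi_k(\rho)$. The piece containing $\mathcal{N}(\Phi)$ is either quadratic (or higher) in $\Phi$, or else linear in $\Phi$ with a prefactor $V-1=\mathcal{O}(\rho^{-\nu})$ or of coupling order $\mathcal{O}(e^{-\tau 2\pi\rho/k})$; using $|\partial_\rho W_\rho|\lesssim W_\rho$ together with the bound \eqref{resto2} yields $|\int\mathcal{N}(\Phi)\partial_\rho W_\rho|\lesssim k(\|\Phi\|_*^2+(\rho^{-\nu}+e^{-\tau 2\pi\rho/k})\|\Phi\|_*)$. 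For $\int\mathcal{L}(\Phi)\partial_\rho W_\rho$ I integrate by parts onto $\partial_\rho W_\rho$ and use $-\Delta\partial_\rho U_h+\partial_\rho U_h=pU_h^{p-1}\partial_\rho U_h$ to produce the interaction quantity $\sum_h pU_h^{p-1}\partial_\rho U_h-pW_\rho^{p-1}\partial_\rho W_\rho$, whose $*$-norm is controlled exactly as $E_2$ in Lemma \ref{errore}; pairing with $\Phi$ yields a contribution of the same form as $\Xi_k(\rho)$.

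The two pieces $\int(1-V)W_\rho\partial_\rho W_\rho$ and $\int(\Delta W_\rho-W_\rho+W_\rho^p)\partial_\rho W_\rho$ are handled exactly as $I_1$ and $I_2$ in the proof of Lemma \ref{crho}: restrict to the sector $\Sigma$ and multiply by $k$; retain the diagonal bubble $U_1\partial_\rho U_1$; shift $y=x-\rho\xi_1$; and, for the first, use \eqref{ipo-v} to obtain $\mathfrak{v}_\infty\mathtt{A}_1 k/\rho^{\nu+1}$, while for the second the expansion $(a+b)^p-a^p-b^p\sim pa^{p-1}b$ reduces to a convolution of $pU_1^{p-1}U_2$ against $\partial_\rho U_1$ to which Lemma \ref{ruiz}, Lemma \ref{new} and the asymptotic \eqref{groundstate} deliver $-\mathtt{A}_2(k/\rho)^{(n-1)/2}e^{-2\pi\rho/k}$. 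The only dimensional novelty compared with Lemma \ref{crho} is the polynomial correction $(n-1)/2$ coming from $U(r)\sim\mathfrak{u}r^{-(n-1)/2}e^{-r}$.

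The delicate piece is
\[
\mathcal{J}:=\beta\int_{\mathbb{R}^n}W_\rho^q\sum_{i=2}^dW_\rho^r(\hat\Theta_i x)\,\partial_\rho W_\rho.
\]
Restricting to $\Sigma$, splitting into the subsectors $\widetilde\Sigma_\ell$, keeping only the leading bubbles (the others being exponentially smaller by Lemma \ref{chiave} and \eqref{min2}), shifting $y=x-\rho\xi_1$, converting $U^q U'(y_1/|y|)$ into $\partial_{y_1}U^{q+1}/(q+1)$, and restricting to the two nearest-neighbour sectors $\ell=\pm1$, $\mathcal{J}$ reduces at leading order to a multiple of the convolution integral
\[
\int_{\mathbb{R}^n}U^{q+1}(y)\,\partial_{y_1}U^r(y-z)\,dy,\qquad |z|\sim\frac{2\pi\rho}{dk}.
\]
A Laplace/saddle-point expansion of this convolution as $|z|\to\infty$ has three candidate sources of leading order: the peak of $U^{q+1}$ near $y=0$, giving $\sim|z|^{-r(n-1)/2}e^{-r|z|}$; the peak of $U^r$ near $y=z$, giving $\sim|z|^{-(q+1)(n-1)/2}e^{-(q+1)|z|}$; and, when the exponential rates coincide ($q+1=r$), the segment joining the two peaks, whose transverse Gaussian integration reduces to a Beta-type integral $\int_0^1 t^{(1-r)(n-1)/2}(1-t)^{(1-r)(n-1)/2}dt$.

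The four cases of the lemma correspond exactly to the four regimes of this expansion. For $q+1\neq r$ only the slower-decaying endpoint survives, giving case (1). For $q+1=r>(n+1)/(n-1)$ the Beta integral diverges at its endpoints and the endpoint contributions still dominate, giving case (2). At the critical value $q+1=r=(n+1)/(n-1)$ the Beta integral becomes logarithmically divergent; truncating at scale $1/|z|$ produces a factor $\log|z|\sim\log\log k$ (since $|z|\sim\log k$ on $\mathcal{D}_k$), exactly the $\log\log k$ that already appeared in Lemma \ref{gamma}, yielding case (3). Finally, for $q+1=r<(n+1)/(n-1)$ the Beta integral is convergent and the segment contribution overtakes the endpoint contributions, producing the larger polynomial factor of case (4). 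The critical regime (3) is the principal technical obstacle, because the $\log\log k$ factor must be extracted by a careful regularisation of the Beta integral; the other cases follow from routine Laplace-method computations, and combining the three steps yields the claimed expansion.
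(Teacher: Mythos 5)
Your decomposition of $\mathfrak{C}_k(\rho)$, the absorption of $\int\mathcal{N}(\Phi)\partial_\rho W_\rho$ and $\int\mathcal{L}(\Phi)\partial_\rho W_\rho$ into $\Xi_k(\rho)$, and the treatment of $I_1$ and $I_2$ all coincide with the paper's proof. The one place you diverge is the cross-interaction term $I_3=\beta\int W_\rho^q\sum_{i\ge2}W_\rho^r(\hat\Theta_i\cdot)\partial_\rho W_\rho$: after reducing it to the asymptotics of $\nabla_\zeta\Gamma_{q+1,r}(\zeta)$ for $|\zeta|\sim 2\pi\rho/(dk)$, the paper simply invokes Lemma \ref{ruiz} and its $C^1$-version Lemma \ref{new} (exactly as you yourself do for $I_2$), whereas you sketch a Laplace/saddle-point heuristic in which the two endpoint peaks and the mid-segment Beta integral produce the four regimes. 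The two routes agree: the slower-decaying endpoint reproduces case (i) of Lemma \ref{ruiz} and yields case (1) of the lemma when $q+1\neq r$; and for $q+1=r$ the convergent/logarithmic/divergent behaviour of $\int_0^1 t^{(1-r)(n-1)/2}(1-t)^{(1-r)(n-1)/2}\,dt$ corresponds precisely to the three subcases $a_2\lessgtr\frac{n+1}{2}$ of Lemma \ref{ruiz}(ii), with the $\ln|\zeta|\sim\ln\ln k$ factor emerging at the threshold $r=\frac{n+1}{n-1}$. Your version is more self-contained (it explains where the quoted lemma comes from) at the cost of being only a sketch; the paper's route is shorter and already rigorous because it offloads the hard analysis onto the cited results. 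Since you already cite Lemmas \ref{ruiz} and \ref{new} for $I_2$, it would be cleaner and more uniform to do the same for $I_3$ rather than switch to the heuristic derivation.
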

\begin{proof}
We argue as in Lemma \ref{crho}.
The leading term is $\intR\mathcal E\partial_\rho W_\rho:$
$$\begin{aligned}
\intR\mathcal E\partial_\rho W_\rho:=&\underbrace{\intR(1-V(x)) W_\rho\partial_\rho W_\rho}_{=I_1}+\underbrace{\intR\(\Delta W_\rho-W_\rho+W_\rho^p\)\partial_\rho W_\rho}_{=I_2}  \\
&+\underbrace{\int\limits_{\mathbb R^n}W_\rho^q\sum_{i=2}^d W^r_\rho(\hat\Theta_i x)\partial_\rho W_\rho}_{=I_3}
\end{aligned}$$
The first term $I_1$ is given in Lemma \ref{crho}. 
We estimate $I_2$  (see the estimate of $I_2$ in  Lemma \ref{crho})
$$\begin{aligned} I_2&=k\int_{ \Sigma}\(\(U_1+\sum_{h=2}^kU_h\)^p-U_1^p-\sum_{h=2}^k U_h^p\)\(\partial_\rho U_1+\sum\limits_{i=2}^k \partial_\rho U_i\)\\
&= k\sum_{h=2}^k\int_{ \Sigma} pU_1^{p-1}    U_h\partial_\rho U_1+h.o.t.\\ &= k \sum_{h=2} ^k\int_{ \Sigma} pU^{p-1}(x-\rho\xi_1) U'(x-\rho\xi_1 ) {\langle x-\rho\xi_1,-\xi_1\rangle\over|x-\rho\xi_1|}   U (x-\rho\xi_h)dx+h.o.t.\\
&=  k \sum_{h=2} ^k  \underbrace{\intR pU^{p-1}(x+\rho(\xi_h- \xi_1)) U'(x+\rho(\tilde\eta_\ell- \xi_1)  ) {\langle x+\rho(\xi_h- \xi_1),-\xi_1\rangle\over|x+\rho(\xi_h- \xi_1) |} U (x)dx}_{= \langle \nabla_\zeta \Gamma_{p,1}(\zeta),\xi_1\rangle,\ \zeta=\rho\(\xi_h-\xi_1 \)}
 +h.o.t.\\
&=    - \mathfrak  c  k \sum_{h=2}^k {\left\langle   {\xi_1 -\xi_h\over |\xi_1 -\xi_h |},\xi_1\right\rangle } {e^{-\rho|\xi_1 -\xi_h |}\over\(\rho|\xi_h-\xi_1|\)^{n-1\over2}}+h.o.t.\\
&= -  \mathtt A_2  \(\frac k\rho\)^{n-1\over 2}e^{-2\rho\frac\pi k} +h.o.t. \ \hbox{because of \eqref{fino1}}\end{aligned}$$ 
and also $I_3$ (see the estimate of $I_3$ in  Lemma \ref{crho})
$$\begin{aligned}I_3= &\beta\intR\(\sum_{h=1}^k U_h\)^q\sum_{i=2}^d \(\sum_{j=1}^k U_j(\hat \Theta_i x)\)^r\partial_\rho W_\rho\\ &=\beta k\int_{ \Sigma}\(U_1+\sum_{h=2}^k U_h\)^q\sum_{i=2}^d\(\sum_{j=1}^k U _{ij}\)^r\(\partial_\rho U_1+\sum\limits_{i=2}^k \partial_\rho U_i\)\\
&=\beta k \int_{ \Sigma} U_1^q\partial_\rho U_1\sum_{i=2}^d  \sum_{j=1}^kU_{ij}^r+h.o.t. \\
&=\beta k \sum_{\ell\in \mathcal I\atop \ell\not=0}\int_{ \Sigma} U^q(x-\rho\xi_1) U'(x-\rho\xi_1 ) {\langle x-\rho\xi_1,-\xi_1\rangle\over|x-\rho\xi_1|}   U^r(x-\rho\tilde\eta_{\ell})dx+h.o.t.\\
&=\beta k \sum_{\ell\in \mathcal I\atop \ell\not=0}\underbrace{\int_{ \mathbb R^n} U^q(x+\rho(\tilde\eta_{\ell}- \xi_1)) U'(x+\rho(\tilde\eta_\ell- \xi_1)  ) {\langle x+\rho(\tilde\eta_\ell- \xi_1),-\xi_1\rangle\over|x+\rho(\tilde\eta_{\ell}- \xi_1) |}  U^r(x)dx}_{=- \frac1{q+1}\langle\nabla_\zeta \Gamma_{q+1,r}(\zeta),\xi_1\rangle,\ \zeta=\rho\(\tilde\eta_{\ell}-\xi_1 \)}
+h.o.t.\\
 &= \left\{\begin{aligned}
&-  \beta   \mathtt A_3\(\frac k\rho\)^{\min\{q+1,r\}\frac{n-1}2} e^{-\min\{q+1,r\} \frac{2\pi\rho}{d k}} +h.o.t. \  \hbox{if}\ r\not =q+1 \\
&-  \beta   \mathtt A_3\(\frac k\rho\)^{r(n-1)+\frac{n+1}2} e^{-r \frac{2\pi\rho}{d k}}   +h.o.t. \  \hbox{if}\ r=q+1<{n+1\over n-1}\\
&-  \beta   \mathtt A_3\(\frac k\rho\)^{r(n-1)\over2} e^{-r \frac{2\pi\rho}{d k}}  \ln\ln k +h.o.t. \  \hbox{if}\ r=q+1={n+1\over n-1}\\
&-  \beta   \mathtt A_3\(\frac k\rho\)^{r(n-1)\over2} e^{-r \frac{2\pi\rho}{d k}}   +h.o.t. \  \hbox{if}\ r=q+1>{n+1\over n-1}\\
\end{aligned}\right.\end{aligned}$$
because of  \eqref{fino2}.

Moreover, the higher order terms $\int\limits_{\mathbb R^n}\mathcal N(\Phi)\partial_\rho W_\rho$ and $\int\limits_{\mathbb R^n}\mathcal L(\Phi)\partial_\rho W_\rho$ can be estimated as
$$\int\limits_{\mathbb R^n}\mathcal N(\Phi)\partial_\rho W_\rho\lesssim k\(\|\Phi\|_{*}^2+\left\|W_\rho^{q-1}\sum\limits_{\ell=2}^d|W_\rho(\hat\Theta_i x)|^{r}\right\|_{*}\|\Phi\|_{*}+\left\|W_\rho^q\sum\limits_{\ell=2}^d|W_\rho(\hat\Theta_i x)|^{r-1}\right\|_{*}\|\Phi\|_{*}\)$$
where (arguing as in Lemma \ref{inter})
$$\left\|W_\rho^{q-1}\sum\limits_{\ell=2}^d|W_\rho(\hat\Theta_i x)|^{r}\right\|_{*}\lesssim e^{-\(\min\{q-1,r\}-\alpha\)\frac{2\pi\rho}{dk}}$$
and
$$\left\|W_\rho^{q}\sum\limits_{\ell=2}^d|W_\rho(\hat\Theta_i x)|^{r-1}\right\|_{*}\lesssim e^{-\(\min\{q,r-1\}-\alpha\)\frac{2\pi\rho}{dk}},$$
and 
 $$\int\limits_{\mathbb R^n}\mathcal L(\Phi)\partial_\rho W_\rho\lesssim k\(\frac1{\rho^\nu}\|\Phi\|_{*}+e^{-\min\{1,(p-1-\alpha)\}\frac{2\pi\rho}{k}}\|\Phi\|_{*}\),$$
since we have (see the estimate of $L_2$ in Lemma \ref{crho})
$$\begin{aligned}& \sum_{h=1}^kU_h^{p-1}\langle\nabla U_h, (-\xi_h) \rangle-  W^{p-1}_\rho\partial _\rho W_\rho  \\ &
=  U_1^{p-1}\langle\nabla U_1, (-\xi_1) \rangle+\sum_{h=2}^k U_h^{p-1} \langle\nabla U_h , (-\xi_h) \rangle\\ &-  \(U_1+\sum_{h=2}^kU_h\)^{p-1}\( \langle\nabla U_1, (-\xi_1)+\sum_{h=2}^k \langle\nabla U_h, (-\xi_h)\rangle\)  \\ &
\lesssim U_1^{p-1} \sum_{h=2}^k U_h +U_1 \sum_{h=2}^k U_h^{p-1}.\end{aligned}$$
\end{proof}

 At this point it is clear that  a solution to the non-local equation \eqref{gen-rid} does exist if 
we find $\rho$ so that $\mathfrak C_k(\rho)=0.$ At this aim, it is useful that  the last term $\Xi_k(\rho)$ is an higher order term in the expansion of $\mathfrak C_k$ and this is achieved if we choose the exponents $p,$ $q$ and $r$ and the number of equations $d$ in a proper way. 
We will focus on the particular case  $q=\frac{p-1}2$ and $r=\frac{p+1}2.$ 

\begin{proof}[ Proof of Theorem \ref{main1}: completed]
If   $d>\frac{p+1}2$ and $p>5$ ($n=2$ because the exponent $5$ is critical in 3D), by (2) of Lemma \ref{errore2}
  \beq\label{ckr} \mathfrak C_k(\rho) =\mathfrak v_\infty\mathtt A_1\frac{k}{\rho^{\nu+1}}(1+o(1))
-\beta   \mathtt A_3 \(\frac k\rho\)^{\frac{p+1}{4}} e^{-\frac{(p+1)\pi\rho}{d k}}(1+o(1))  .\eeq
 Indeed, in this case $\frac{p+1}{2d}<1$ and  $\Xi_k(\rho)$ can be estimated using \eqref{resto2} with
  $$\sigma=\frac {p-1}{2 d},\ \tau=\frac{p-3}{2d},\  2\sigma>\frac{p+1}{2d}\ \hbox{and}\ \sigma+\tau>\frac{p+1}{2d}.$$
   By \eqref{ckr}, there exists    $\rho(k)\in \mathcal D_k$ (see \eqref{raggio})  such that  $ \mathfrak C_k(\rho(k)) =0$ if either  $ \mathfrak v_\infty$ and $\beta$ have the same sign.
   
  On the other hand if  $d\le \frac{p-1}2$ and $p>3 $  the coupling term is an higher order term in the expansion (2), (3) or (4) of Lemma \ref{errore2} and 
   \beq\label{ckr2} \mathfrak C_k(\rho) =\mathfrak v_\infty\mathtt A_1\frac{k}{\rho^{\nu+1}}(1+o(1))
-\beta   \mathtt A_2 \(\frac k\rho\)^{\frac{n-1}2} e^{-\frac{2\pi\rho}{  k}}(1+o(1))  .\eeq
 Indeed in this case $ 1<\frac{p+1}{2d}$ and
    $\Xi_k(\rho)$ can be estimated using \eqref{resto2} with
   $$\sigma=1,\ \tau>0,\  2\sigma>1\ \hbox{and}\ \sigma+\tau>1.$$
     By \eqref{ckr2}, there    $\rho(k)\in \mathcal D_k$ (see \eqref{raggio})  such that  $ \mathfrak C_k(\rho(k)) =0$ if  $ \mathfrak v_\infty>0$ whatever $\beta$ is.
   
\end{proof}

 \appendix\section{Auxiliary results}\label{app1}
 
We  recall the  result  \cite[Lemma 3.7]{ACR}.
\begin{lemma}\label{ruiz}
Let $W_1,W_2:\mathbb R^n\to \R$ be two positive continuous radial functions such that $$W_i(x)\sim |x|^{-a_i} e^{-b_i|x|} \ \hbox{as}\ |x|\to\infty$$ where $a_i\in\R$, $b_i>0$. Then for some constant $\mathfrak c>0$ we have
\begin{itemize}
\item[(i)] If $b_1<b_2$ then $$\intR W_1(x+\zeta) W_2(x)dx\sim \mathfrak c e^{-b_1|\zeta|}|\zeta|^{a_1}\ \hbox{as}\ |\zeta|\to\infty$$
Clearly, if $b_1>b_2$ a similar expression holds, by replacing $a_1$ and $b_1$ with $a_2$ and $b_2$.
\item[(ii)] If $b_1=b_2=:b$ then, suppose that $a_1\le a_2$  $$\intR W_1(x+\zeta) W_2(x)dx\sim\left\{\begin{aligned} &\mathfrak c  e^{-b|\zeta|}|\zeta|^{-a_1-a_2+\frac{n+1}{2}}\quad &\mbox{if}\ a_2<\frac{n+1}{2}\\
&\mathfrak c e^{-b|\zeta|}|\zeta|^{-a_1}\ln|\zeta|\quad &\mbox{if}\ a_2=\frac{n+1}{2}\\
&\mathfrak c e^{-b|\zeta|}|\zeta|^{-a_1}\quad &\mbox{if}\ a_2>\frac{n+1}{2}\end{aligned}\right.\quad \hbox{as}\ |\zeta|\to\infty.$$\end{itemize}
\end{lemma}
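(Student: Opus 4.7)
The plan is to run the same Ljapunov--Schmidt machinery developed in Section \ref{princi} for Theorem \ref{main}, but applied to the more general system \eqref{sis-2d} with the specific choice $q=\frac{p-1}{2}$, $r=\frac{p+1}{2}$, and with the crucial simplification that no refinement of the ansatz is needed. First, exploiting the rotational symmetries \eqref{rotoi}--\eqref{comps}, I would reduce \eqref{sis-2d} to the single non-local equation \eqref{gen-rid}, and look for a solution of the form $u=W_\rho+\Phi$ with $W_\rho$ as in \eqref{sol}, $\rho\in\mathcal D_k$, and $\Phi\in\mathscr H$ satisfying the orthogonality \eqref{orto}. The point of choosing $q=\frac{p-1}{2},\ r=\frac{p+1}{2}$ (which reduces to the original system when $p=3$) is that the coupling term has lower order than the self-interaction, so the refinement $\Psi_\rho$ used in Theorem \ref{main} becomes superfluous.

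Second, I would invert the associated linear operator in the weighted norm via Proposition \ref{inv2}, then solve the intermediate non-linear problem \eqref{pbint} by a standard contraction mapping argument in the ball of radius $R\bigl(\rho^{-\nu}+e^{-\sigma\frac{2\pi\rho}{k}}\bigr)$, giving the remainder estimate \eqref{resto2} with
\[
\sigma=\min\left\{1,\,p-1-\alpha,\,\tfrac{q-\alpha}{d},\,\tfrac{r-\alpha}{d}\right\}.
\]
The error estimate $\|\mathcal E\|_*\lesssim \rho^{-\nu}+e^{-\min\{1,p-1-\alpha\}\frac{2\pi\rho}{k}}+e^{-(\min\{q,r\}-\alpha)\frac{2\pi\rho}{dk}}$ (already provided in the excerpt) then feeds into Lemma \ref{errore2} to yield the asymptotic expansion of the reduced 1D function $\mathfrak C_k(\rho)$.

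Third, I would separate the two cases and solve $\mathfrak C_k(\rho)=0$ by setting $\rho=rk\ln k$ with $r$ in a small interval and invoking an intermediate-value argument. In case (i), with $d>\frac{p+1}{2}$, the inequality $\frac{p+1}{2d}<1$ forces the coupling exponent $e^{-\frac{(p+1)\pi\rho}{dk}}$ to decay strictly slower than the self-interaction $e^{-\frac{2\pi\rho}{k}}$, so by Lemma \ref{errore2}(2) the reduced function takes the form \eqref{ckr}; its two leading terms have opposite signs precisely when $\mathfrak v_\infty$ and $\beta$ share a sign, and balancing the algebraic factor $k/\rho^{\nu+1}$ against the exponential $(k/\rho)^{(p+1)/4}e^{-(p+1)\pi\rho/(dk)}$ pins down a unique $r>0$. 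The constraint $p>5$ enters here to guarantee $\sigma$ and $\tau$ are large enough that the remainder $\Xi_k(\rho)$ is of strictly higher order. In case (ii), with $d\le \frac{p-1}{2}$, the coupling decays faster than the self-interaction, so Lemma \ref{errore2} gives the expansion \eqref{ckr2} with the $\beta$-term negligible; balancing the potential term against the self-interaction forces $\mathfrak v_\infty>0$ and yields $r>0$ for any sign of $\beta$.

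The main obstacle, as in Theorem \ref{main}, is showing that $\Xi_k(\rho)$ is truly negligible compared with the two leading terms of $\mathfrak C_k(\rho)$ on the interval $\rho\in\mathcal D_k$. This reduces to verifying the inequalities $2\sigma>\frac{p+1}{2d}$ and $\sigma+\tau>\frac{p+1}{2d}$ in case (i), and $2\sigma>1$, $\sigma+\tau>1$ in case (ii); both sets of inequalities translate to the hypotheses $d>\frac{p+1}{2},\ p>5$ and $d\le\frac{p-1}{2},\ p>3$ respectively, after a careful bookkeeping of the exponents $\sigma,\tau$ coming from the weighted-norm analysis. Once this is done, the existence of a zero of $\mathfrak C_k$ in $\mathcal D_k$ follows from the continuity of $\rho\mapsto \mathfrak C_k(\rho)$ and the sign change provided by the leading terms.
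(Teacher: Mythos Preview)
Your proposal addresses the wrong statement. The statement in question is Lemma~\ref{ruiz}, the convolution estimate for products of exponentially decaying radial functions, whereas what you have written is a proof sketch of Theorem~\ref{main1}. Nothing in your proposal touches the content of Lemma~\ref{ruiz}: there is no analysis of the integral $\int_{\mathbb R^n} W_1(x+\zeta)W_2(x)\,dx$, no splitting according to whether $b_1<b_2$ or $b_1=b_2$, and no derivation of the three subcases in~(ii) depending on $a_2$ relative to $\tfrac{n+1}{2}$.

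For the record, the paper itself does not prove Lemma~\ref{ruiz} either: it is \emph{quoted} from the literature as \cite[Lemma~3.7]{ACR}, and the paper only uses it (together with Lemma~\ref{new}, which extends the estimate to the gradient) as a black box in the computations of Lemma~\ref{gamma}, Lemma~\ref{crho}, and Lemma~\ref{errore2}. So if your task was to supply a proof of Lemma~\ref{ruiz}, you would need to produce the actual asymptotic analysis of the convolution, not the Ljapunov--Schmidt argument for Theorem~\ref{main1}.
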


 Let $s,t\ge1.$ Set
$$\Gamma_{s,t}(\zeta):=\intR U^s(x+\zeta) U^t(x)dx,\ \zeta\in\mathbb R^n.$$
By Lemma \eqref{ruiz} and \eqref{groundstate}
there exists $\mathfrak c>0$ such that
\begin{itemize}
\item[(i)] if $s<t$ then $$\Gamma_{s,t}(\zeta)\sim \mathfrak c e^{-s|\zeta|}|\zeta|^{-s{n-1\over2}}\ \hbox{as}\ |\zeta|\to\infty$$
 \item[(ii)] if $s=t$ then  
 $$\Gamma_{s,t}(\zeta)\sim\left\{\begin{aligned} &\mathfrak c  e^{-s|\zeta|}|\zeta|^{-s(n-1)-\frac{n+1}{2}}\quad &\mbox{if}\ s <\frac{n+1}{n-1}\\
&\mathfrak c e^{-s|\zeta|}|\zeta|^{-s{n-1\over2}}\ln|\zeta|\quad &\mbox{if}\ s= \frac{n+1}{n-1}\\
&\mathfrak c e^{-s|\zeta|}|\zeta|^{-s{n-1\over2}}\quad &\mbox{if}\ s >\frac{n+1}{n-1}\end{aligned}\right.\quad \hbox{as}\ |\zeta|\to\infty.$$\end{itemize}

 We are going to prove that all the previous estimates hold true in the $C^1-$sense. 
\begin{lemma}\label{new}
It holds true that
\begin{itemize}
\item[(i)] if $s<t$ then $$\nabla_\zeta \Gamma_{s,t}(\zeta)\sim -  \mathfrak c s\frac\zeta{|\zeta|} e^{-s |\zeta|}|\zeta|^{-s{n-1\over2}} \ \hbox{as}\ |\zeta|\to\infty$$
 \item[(ii)] if $s=t$ then, suppose that $a_1\ge a_2$  
 $$\nabla_\zeta \Gamma_{s,t}(\zeta)\sim \left\{\begin{aligned} &-  \mathfrak c s\frac\zeta{|\zeta|}  e^{-s|\zeta|}|\zeta|^{-s(n-1)-\frac{n+1}{2}}\quad &\mbox{if}\ s <\frac{n+1}{n-1}\\
&-  \mathfrak c s\frac\zeta{|\zeta|} e^{-s|\zeta|}|\zeta|^{-s{n-1\over2}}\ln|\zeta|\quad &\mbox{if}\ s= \frac{n+1}{n-1}\\
&-  \mathfrak c s\frac\zeta{|\zeta|} e^{-s|\zeta|}|\zeta|^{-s{n-1\over2}}\quad &\mbox{if}\ s >\frac{n+1}{n-1}\end{aligned}\right.\quad \hbox{as}\ |\zeta|\to\infty.$$\end{itemize}
\end{lemma}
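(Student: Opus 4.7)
The plan is to reduce the gradient asymptotics to the scalar statement of Lemma \ref{ruiz} by differentiating under the integral sign and then extracting the limiting direction $\zeta/|\zeta|$ from the resulting vector integrand. Since $\nabla U(y) = U'(|y|)\,y/|y|$, a direct differentiation yields
$$\nabla_\zeta \Gamma_{s,t}(\zeta) \;=\; s\int_{\mathbb R^n} U^{s-1}(x+\zeta)\, U'(|x+\zeta|)\, \frac{x+\zeta}{|x+\zeta|}\, U^t(x)\, dx.$$
From \eqref{groundstate} one has $U'(r)/U(r) = -1 + o(1)$ as $r\to\infty$, so the scalar factor satisfies $U^{s-1}(y)U'(|y|) = -U^s(y)\,(1+o(1))$ uniformly on $\{|y|\ge M\}$ for every $M$ large. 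As $|\zeta|\to\infty$ the scalar integral $\int U^s(x+\zeta) U^t(x)\,dx$ concentrates in a region where $|x+\zeta|$ is large (near $x=0$ in case (i), along the segment joining $0$ and $-\zeta$ in case (ii)), so this substitution costs only a $(1+o(1))$ factor.

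Next I replace the unit vector $(x+\zeta)/|x+\zeta|$ by the constant $\zeta/|\zeta|$ via
$$\frac{x+\zeta}{|x+\zeta|} \;=\; \frac{\zeta}{|\zeta|} + R(x,\zeta), \qquad |R(x,\zeta)|\lesssim \min\!\left(1,\tfrac{|x|}{|\zeta|}\right),$$
and pull the constant vector out of the integral. The leading term becomes $-s\,(\zeta/|\zeta|)\,\Gamma_{s,t}(\zeta)\,(1+o(1))$, so applying Lemma \ref{ruiz} with $W_1=U^s$ (parameters $a_1=s(n-1)/2$, $b_1=s$) and $W_2=U^t$ (parameters $a_2=t(n-1)/2$, $b_2=t$) recovers the polynomial factors claimed in Lemma \ref{new}, distinguishing the three subcases of (ii) according to whether $s(n-1)/2$ is less than, equal to, or greater than $(n+1)/2$.

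It remains to confirm that the $R$-contribution is strictly lower order. I split the integration region into $\{|x|\le |\zeta|/2\}$, where $|R|\lesssim |x|/|\zeta|$, and its complement, where $U^t(x)\lesssim e^{-t|\zeta|/2}|x|^{-t(n-1)/2}$ provides an extra exponentially small factor that dominates the $|R|\le 1$ bound. In the inner region the extra $|x|/|\zeta|$ modifies the scalar integrand to $|\zeta|^{-1}\int U^s(x+\zeta)\,|x|\,U^t(x)\,dx$; a further application of Lemma \ref{ruiz} with the polynomial weight of $W_2$ shifted by one unit gives a net gain of a factor $1/|\zeta|$ relative to the main term, so the error integrates to $o(1)$ times the claimed asymptotic.

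The main obstacle is the resonant case $s=t$: here the scalar integral is not localized at a single point but along the segment from $0$ to $-\zeta$, so $(x+\zeta)/|x+\zeta|=\zeta/|\zeta|$ holds only on the ridge itself and transverse fluctuations of order $\sqrt{|\zeta|}$ enter the Laplace-type expansion. One must track this expansion both in the leading term and in the $R$-correction to show that the combined correction is strictly of lower order than the claimed asymptotics, and verify that the uniform $o(1)$ coming from $U'/U+1$ survives the ridge integration; both points are standard but require care to yield precisely the prefactor $-\mathfrak c\,s\,\zeta/|\zeta|$.
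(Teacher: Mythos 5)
Your plan is genuinely different from the paper's, which avoids a direct asymptotic analysis of the vector integral entirely: the paper sets $f(\zeta)=g(\zeta)h(\zeta)$ with $g=\Gamma_{s,t}$ and $h(\zeta)=e^{s|\zeta|}|\zeta|^{s(n-1)/2}$ (so $f\to\mathfrak c$ by Lemma~\ref{ruiz}), shows that $\partial^2_{\zeta_i\zeta_i}f$ is bounded by noting that all derivatives of $g$ are dominated by scalar convolutions of the type already controlled by Lemma~\ref{ruiz} (because $|U'|,|U''|\lesssim U$), and then invokes the elementary Lemma~\ref{calcolo1} to conclude $\partial_{\zeta_i}f\to 0$, from which the claim drops out of the product rule. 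That route never touches the geometry of the Laplace ridge.

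The gaps in your direct route are real, not just technical nuisances that you are free to postpone. First, in case (ii) the bound $|R(x,\zeta)|\lesssim\min(1,|x|/|\zeta|)$ is far too crude: writing $x=-\lambda\zeta+x_\perp$ with $x_\perp\perp\zeta$, the actual deviation is $|R|\approx|x_\perp|/((1-\lambda)|\zeta|)$, which on the ridge ($x_\perp=0$) vanishes even though $|x|/|\zeta|\approx\lambda$ is of order one; your proposed "one further application of Lemma~\ref{ruiz} with the weight of $W_2$ shifted by one" does not yield the $1/|\zeta|$ gain when $s=t$, because in that resonant regime the shifted convolution is of order $|\zeta|\cdot\Gamma_{s,s}(\zeta)$, not $\Gamma_{s,s}(\zeta)$, and the $1/|\zeta|$ pull-out exactly cancels. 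Second, in the endpoint region $\{|x+\zeta|\le M\}$ both of your substitutions fail simultaneously ($U'/U\not\approx-1$ and $(x+\zeta)/|x+\zeta|$ is not close to $\zeta/|\zeta|$), yet when $s\ge(n+1)/(n-1)$ this region carries a positive fraction of the total mass of $\Gamma_{s,s}$; one then has to compute it separately (e.g.\ integrate $\nabla(U^s)(y)\,U^s(y-\zeta)$ by parts against the approximation $U^s(y-\zeta)\approx U^s(\zeta)e^{s\langle\zeta/|\zeta|,y\rangle}$), which is not a "standard but careful" refinement of your scheme but a different argument. Third, even in case (i) the claim that $U^t(x)\lesssim e^{-t|\zeta|/2}$ "dominates" the $|R|\le1$ bound on $\{|x|>|\zeta|/2\}$ compares $e^{-t|\zeta|/2}$ with the leading order $e^{-s|\zeta|}$, which is only favorable when $t>2s$, not under the sole assumption $t>s$; one must combine the decay of both factors more carefully. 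Since all three of these are exactly the cases the lemma is meant to cover, the outline as written does not constitute a proof, and the paper's $C^0\Rightarrow C^1$ bootstrap via Lemma~\ref{calcolo1} is the cleaner way to close them.
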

\begin{proof}
We only prove the case $s<t,$ being the proof of other cases similar. We point out that
$$\int\limits_{\mathbb R^n}U^s(x+ \zeta)\partial_{x_i}U^t(x)dx=-\int\limits_{\mathbb R^n}\partial_{x_i}U^s(x+ \zeta)U^t(x)dx=-\int\limits_{\mathbb R^n}\partial_{\zeta_i}U^s(x+ \zeta)U^t(x)dx$$
and we are going to prove that
$$\int\limits_{\mathbb R^n}\partial_{\zeta_i}U^s(x+ \zeta)U^t(x)dx\sim-  \mathfrak c s\frac\zeta{|\zeta|} e^{-s |\zeta|}|\zeta|^{-s{n-1\over2}}\ \hbox{as}\ |\zeta|\to\infty.$$

Set $f(\zeta):=g(\zeta)h(\zeta) $ with
$$g(\zeta):=\int\limits_{\mathbb R^n}U^s(x+ \zeta)U^t(x)dx\ \hbox{and}\ h(\zeta):=e^{s |\zeta|}|\zeta|^{s{n-1\over2}}.$$ 
We know that
\begin{equation}\label{n0}\lim\limits_{|\zeta|\to \infty}f(\zeta)=\mathfrak c,\end{equation}
We are going to prove that
\begin{equation}\label{n1}
\lim\limits_{|\zeta|\to \infty}\partial_{\zeta_i} f(\zeta)=0.\end{equation}
Since
\begin{equation}\label{n2}\partial_{\zeta_i}f =g \partial_{\zeta_i}h +h \partial_{\zeta_i}g \end{equation}
and
\begin{equation}\label{n3}\partial_{\zeta_i}h(\zeta)=s{\zeta_i\over|\zeta|}h(\zeta)(1+o(1)),\end{equation}
by \eqref{n0}, \eqref{n1}, \eqref{n2} and \eqref{n3}   the claim follows.
 To prove \eqref{n1}, by Lemma \ref{calcolo1} we need to show that  there exists $c>0$ such that
$$
|\partial^2_{\zeta_i\zeta_i}f(\zeta)|\le c\ \hbox{if $|\zeta|$ is large enough}.$$
We have
$$\partial^2_{\zeta_i\zeta_i}f=g\partial^2_{\zeta_i\zeta_i}h+2\partial_{\zeta_i}g\partial_{\zeta_i}h +h\partial^2_{\zeta_i\zeta_i}g.$$
It is easy to check that
$$  |\partial_{\zeta_i}h |,|\partial^2_{\zeta_i\zeta_i}h |=\mathcal O\(h \).$$
By \eqref{n0} 
$g   =\mathcal O\(\frac1{h }\).$
We only need to estimate $\partial_{\zeta_i} g$ and $\partial^2_{\zeta_i\zeta_i}g$.
We have
$$\begin{aligned}\partial_{\zeta_i} g(\zeta)&=\int\limits_{\mathbb R^n}\partial_{\zeta_i} U^s(x+  \zeta)U^t(x)dx\\ & =\int\limits_{\mathbb R^s}sU^{s-1} (x+ \zeta) U'(x+\zeta){x_1+\zeta_i\over|x+\zeta|}  U^t(x)dx\\
&\lesssim  \int\limits_{\mathbb R^n}U^{s-1} (x+ \zeta) |U'(x+\zeta)|U^t(x)dx\ \hbox{\tiny since $|U'|=\mathcal O(U)$}
\\ &\lesssim \int\limits_{\mathbb R^n}U^ s (x+ \zeta)  U^t(x)dx\lesssim \frac1{h(\zeta)} \ \hbox{\tiny because of \eqref{n0}}
\end{aligned}$$
and
$$\begin{aligned}\partial^2_{\zeta_i\zeta_i}g(\zeta)&= \int\limits_{\mathbb R^n}s\partial_{\zeta_i}\(U^{s-1} (x+ \zeta) U'(x+\zeta){x_1+\zeta_i\over|x+\zeta|}\) U^t(x)dx\\
& = \int\limits_{\mathbb R^n}sU^{s-2}\( U'(x+\zeta){x_1+\zeta_i\over|x+\zeta|}\) ^2U^t(x)dx\\ &+
 \int\limits_{\mathbb R^n}sU^{s-1} (x+ \zeta) U''(x+\zeta)\({x_1+\zeta_i\over|x+\zeta|}\)^2 U^t(x)dx\\ &+
 \int\limits_{\mathbb R^n}sU^{s-1} (x+ \zeta) U'(x+\zeta) \({1\over |x+\zeta|}-{(x_1+\zeta_i)^2\over|x+\zeta|^3} \) U^t(x)dx\\
&\\
& \lesssim \int\limits_{\mathbb R^n}   U^s(x+\zeta) U^t(x)dx \ \hbox{\tiny since $|U'|,|U''|=\mathcal O(U)$}\\ &+
\underbrace{ \int\limits_{\mathbb R^n}{1\over |x+\zeta|}  U ^s(x+ \zeta) U^t(x)dx}_{=\mathcal O\(e^{-s|\zeta|} |\zeta|^{-s{n-1\over2}-1}\)} \ \hbox{\tiny because of Lemma \ref{ruiz} with $b_1=s<b_2=t$ and $a_1=-s{n-1\over2}-1$}\\ & 
\\ &\lesssim   \frac1{h(\zeta)} \ \hbox{\tiny because of \eqref{n0}}.
\end{aligned}$$
That concludes the proof.
\end{proof}

\begin{lemma}\label{calcolo1}
Let $f:\mathbb R^n\to\mathbb R$ be a $C^2-$function such that
$$\lim\limits_{|x|\to\infty} f(x)=l\in\mathbb R$$
and there exists $c>0$ such that
$$\sup\limits_{x\in\mathbb R^n}|\partial_{x_1x_1}^2f(x)|\le c$$
then
$$\lim\limits_{|x|\to\infty} \partial_{x_1} f(x)=0.$$
\end{lemma}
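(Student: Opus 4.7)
The plan is to argue by contradiction. Suppose $\partial_{x_1} f$ does not tend to zero at infinity. Then there exist $\delta>0$ and a sequence $(y^{(k)})\subset\mathbb R^n$ with $|y^{(k)}|\to\infty$ such that $|\partial_{x_1}f(y^{(k)})|\ge \delta$ for every $k$.

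The key observation is that the uniform bound $|\partial^2_{x_1x_1}f|\le c$ makes the function $t\mapsto \partial_{x_1}f(t,y_2^{(k)},\dots,y_n^{(k)})$ Lipschitz in $t$ with constant $c$. Hence, writing $y^{(k)}=(y_1^{(k)},y')$, for every $t\in\mathbb R$ with $|t-y_1^{(k)}|\le \delta/(2c)$ one has
\[
|\partial_{x_1}f(t,y')|\ge |\partial_{x_1}f(y^{(k)})|-c|t-y_1^{(k)}|\ge \frac{\delta}{2},
\]
and the sign of $\partial_{x_1}f(\cdot,y')$ is constant on the interval $I_k:=[y_1^{(k)}-\delta/(2c),y_1^{(k)}+\delta/(2c)]$. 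Integrating along this interval gives
\[
\bigl|f(y_1^{(k)}+\delta/(2c),y')-f(y_1^{(k)}-\delta/(2c),y')\bigr|\ge \frac{\delta}{2}\cdot\frac{\delta}{c}=\frac{\delta^2}{2c}.
\]

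On the other hand, since $|y^{(k)}|\to\infty$, both endpoints $(y_1^{(k)}\pm\delta/(2c),y')$ have norm going to $\infty$ as $k\to\infty$ (their norms differ from $|y^{(k)}|$ by a bounded amount). Using $\lim_{|x|\to\infty}f(x)=l$, both $f$-values at the endpoints converge to $l$, so their difference tends to $0$. This contradicts the lower bound $\delta^2/(2c)>0$ obtained above, and completes the proof.

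The argument is completely elementary; there is no serious obstacle. The only point to be mildly careful about is ensuring that both endpoints of $I_k$ lie at points with norms tending to infinity, which is automatic because their $x_1$-coordinates differ from $y_1^{(k)}$ by the fixed constant $\delta/(2c)$ while $|y^{(k)}|\to\infty$.
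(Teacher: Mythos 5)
Your proof is correct, but it takes a genuinely different route from the paper's. The paper argues directly: fixing $\epsilon>0$ and setting $\delta=\epsilon^2/(4c)$, it writes the second-order Taylor expansion $f(x+te_1)=f(x)+\partial_{x_1}f(x)\,t+\tfrac12\partial^2_{x_1x_1}f(x+\theta te_1)\,t^2$ for $t\in(-1,1)$, rearranges to get $|\partial_{x_1}f(x)|\le 2\delta/|t|+\tfrac12 c|t|$ once $|x|\ge R+1$, and optimizes over $t$ to conclude $|\partial_{x_1}f(x)|\le 2\sqrt{\delta c}=\epsilon$ — in effect it reproves Landau's inequality $\|g'\|_\infty^2\lesssim\|g\|_\infty\|g''\|_\infty$ near infinity. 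You instead argue by contradiction: if $|\partial_{x_1}f(y^{(k)})|\ge\delta$ along a sequence escaping to infinity, the Lipschitz bound that $|\partial^2_{x_1x_1}f|\le c$ imposes on $\partial_{x_1}f$ keeps $|\partial_{x_1}f|\ge\delta/2$ with constant sign on a fixed-length interval in the $x_1$-direction, and integrating produces an oscillation of $f$ of size $\ge\delta^2/(2c)$ between two points both tending to infinity, contradicting $f\to l$. Both proofs rest on the same interpolation between $f$ and $f''$ to control $f'$; the paper's version is direct and gives an explicit quantitative bound, while yours is the contrapositive formulation and is somewhat more transparent about the underlying mechanism (a persistently large derivative forces the function to oscillate). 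Either is adequate here.
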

\begin{proof}
Let $\epsilon>0$ be fixed. If  $\delta=\frac{\epsilon^2}{4c}$ there exists $R>0$ such that if $|x|\ge R$ then $|f(x)-l|\le\delta.$
Now take $e_1:=(1,0,\dots,0)\in\mathbb R^n$, $t\in (-1,1)$ so that $|x+te_1|\ge R$ if $|x|\ge R+1$ and apply mean value theorem
$$f(x+te_1)=f(x)+\partial_{x_1}f(x) t+ \frac12\partial_{x_1x_1}^2f(x+\epsilon te_1)t^2$$ for some $\epsilon\in(0,1).$
Then if $|x|\ge R+1$
$$|\partial_{x_1}f(x)|\le \frac1 {|t|} 2\delta+\frac12 c|t|\ \hbox{for any}\ t\in (-1,1),\ t\not=0\ \Rightarrow\ |\partial_{x_1}f(x)|\le2 \sqrt{\delta c}=\epsilon$$
and the claim follows.
\end{proof}

\section{Proof of Proposition \ref{inv}}\label{app2}
Let us consider the Hilbert space
$W^{1,2}(\mathbb R^n)$ equipped with the scalar product
$$\langle u,v\rangle=\int\limits_{\mathbb R^n}\(\nabla u \nabla v+uv\)dx.$$
It is well known that the embedding $\mathscr I: W^{1,2}(\mathbb R^n)\hookrightarrow  L^{ 2}(\mathbb R^n)$ is continuous. We define the adjoint
operator $\mathscr I^*:    L^{ 2}(\mathbb R^n)\to W^{1,2}(\mathbb R^n)$ by duality, i.e.
$$\mathscr I^* f=u\ \hbox{if and only if $u$ is a weak solution to }\ -\Delta u+u=f\ \hbox{in}\ \mathbb R^n.$$
Let us introduce the space
 $$H_\rho^\perp:=\left\{\Phi\in \mathscr H\ :\ \langle \Phi,\mathscr I^* \(\partial_\rho W_\rho\)\rangle=\int\limits_{\mathbb R^n} \Phi\partial_\rho W_\rho=0\right\}$$
 where $\mathscr H$ is defined in \eqref{acca}.

 It is immediate to check that linear problem
\beq\label{pro} \mathcal L (\Phi)= h \ \hbox{in}\  \mathbb R^n ,\ \int\limits_{\mathbb R^n}\Phi\partial_\rho W_\rho=\int\limits_{\mathbb R^n}h\partial_\rho W_\rho=0  \end{equation}
can be rewritten as
$$\Phi+\mathscr K\(\Phi\)=h^*:=\mathscr I^* h,\quad h^*, \Phi\in H_\rho^\perp,$$
where
$$\mathscr K\(\Phi\)=\mathscr I^*\left[\(V(x)-1\)\Phi-3W_\rho^2\Phi  -\beta \Phi\sum_{i=2}^d W^2_\rho(\hat\Theta_i x) -2\beta W_\rho\sum_{i=2}^d W _\rho(\hat\Theta_i x)\Phi(\hat\Theta_i x)\right]$$
is a compact operator since $W_\rho$ decays exponentially and $V$ satisfies \eqref{ipo-v}.
Therefore, by  Fredholm alternative solving problem \eqref{pro} is equivalent to prove that it has a unique solution when $h=0.$ 
Now, it is important to point out that if $h\in \mathscr B_{*}$ 
 then the solution $\Phi\in W^{1,2}(\mathbb R^n)$ to \eqref{pro} also belongs to $ \mathscr B_{*}$, i.e. it belongs to $L^\infty(\mathbb R^n).$
It is enough to apply the standard regularity theory. Indeed,  by \eqref{normastella} we deduce that $h\in L^q(\mathbb R^n)$ for any $q>1$. Moreover, since $W_\rho\in L^\infty(\mathbb R^n)$ and   $\Phi\in L^2(\mathbb R^n),$ by \eqref{pro} we immediately deduce that $\Phi\in W^{2,2}(\mathbb R^n),$ which is embedded in $L^{\infty}(\mathbb R^n)$ if $n=2,3.$

 Finally, to prove the Proposition \ref{inv} it is enough to prove the a priori estimate \eqref{norma}.
 
At this aim it is useful to 
 decompose $\mathcal L$ as  
$$\mathcal L(\Phi)=\underbrace{-\Delta\Phi+V(x)\Phi-3W_\rho^2\Phi  -\beta \Phi\sum_{i=2}^d W^2_\rho(\hat\Theta_i x)}_{\mathcal L_0 (\Phi)}\underbrace{-2\beta W_\rho\sum_{i=2}^d W _\rho(\hat\Theta_i x)\Phi(\hat\Theta_i x)}_{\mathcal L_1(\Phi)}.$$
and to point out that the non-local linear part is small, since 
by \eqref{inter2}
$$\|\mathcal L_1(\Phi)\|_{*}\le c e^{-(1-\alpha)\frac{2\pi\rho}{dk}}\|\Phi\|_{*}.$$
So our problem reduces to prove that if $\Phi\in H_\rho^\perp$ solves 
\beq\label{pbintmodello} \mathcal L _0 (\Phi)= h +\mathfrak c\partial_\rho W_\rho\ \hbox{in}\  \mathbb R^n\eeq
for some  $\mathfrak c\in \mathbb R$ then the priori estimate \eqref{norma} holds
and this is done using the same arguments of Lemma 4.3 of \cite{dwy}. For sake of completeness, we give the proof below.\\

First, we prove that
\beq\label{stimac}\left|\mathfrak c\right|\lesssim \left[\left(\frac{1}{\rho^\nu}+ e^{-(1-\alpha)\frac{2\pi}{k d}}\right)\|\Phi\|_{*}+\|h\|_{*}\right].\eeq
Indeed, by  \eqref{pbintmodello} 
$$\int\limits_{\mathbb R^n} \mathcal L_0 (\Phi)\partial_\rho W_\rho -\int\limits_{\mathbb R^n}  h  \partial_\rho W_\rho =\mathfrak c \int\limits_{\mathbb R^n} \left(\partial_\rho W_\rho\right)^2,$$ where arguing as in Lemma \ref{crho}  $$\left|\int\limits_{\mathbb R^n} \mathcal L_0 (\Phi)\partial_\rho W_\rho\right|\lesssim \left(\frac{k}{\rho^\nu}+k e^{-(1-\alpha)\frac{2\pi}{d k}} \right)\|\Phi\|_{*},$$
moreover
$$\left|\int\limits_{\mathbb R^n} h  \partial_\rho W_\rho\right|\lesssim k\| h \|_{*}, \hbox{because}\ |\partial_\rho W_\rho|\lesssim W_\rho$$ and 
 $$\int\limits_{\mathbb R^n}\left(\partial_\rho W_\rho\right)^2\sim c k\ \hbox{for some positive constant}\ c. $$
 
   Next, we show that there exist constants $\tau$ and $c$ (all independent of $k$ ) such that for any $x\in \mathbb R^n\setminus \bigcup_{i=1, \ldots, d\atop j=1, \ldots, k} B(\rho\eta_{ij}, \tau)$ \beq\label{puntuale}\left|\Phi(x)\right|\le C\left(\|\mathcal L_0 (\Phi)\|_{*}+\sup_{i=1, \ldots, d \atop j=1, \ldots, k}\|\Phi\|_{L^\infty(\partial B(\rho\eta_{ij}, \tau))}\right)\sum_{i=1}^d\sum_{j=1}^k e^{-\alpha|x-\rho\eta_{ij}|},\eeq 
   which immediately implies
\beq\label{stima2} \|\Phi\|_{*}\le C\left(\|\mathcal L_0 (\Phi)\|_{*}+\sup_{i=1, \ldots, d\atop j=1, \ldots, k}\|\Phi\|_{L^\infty(\partial B(\rho\eta_{ij}, \tau))}\right).\eeq
To prove the above pointwise estimate we first show the independence of $\tau$ on $k$ for any $x\in \mathbb R^n\setminus \bigcup_{i=1, \ldots, d\atop j=1, \ldots, k} B(\rho\eta_{ij}, \tau)$. Indeed reasoning as in \cite{dwy} and using Lemma 3.4 of \cite{dwy} we get that
$$\begin{aligned} W_\rho(\hat\Theta_i x)&=\sum_{j=1}^k U(x-\rho\eta_{ij})\\
&\le \sum_{|x-\rho\eta_{ij}|<\rho\sin\frac{\pi}{d k}}U(x-\rho\eta_{ij})+\sum_{\ell=1}^{+\infty}\sum_{\ell \rho\sin\frac{\pi}{dk}\le |x-\rho\eta_{ij}|<(\ell+1)\rho\sin\frac{\pi}{dk}}U(x-\rho\eta_{ij})\\
 &\lesssim U(\tau)+c\sum_{\ell=1}^{+\infty}\ell^{n-1}e^{-\ell \rho \frac{\pi}{d k}} \\ & \le C U(\tau). \end{aligned}$$
Thus we can take $\tau$ sufficiently large (but independent of $k$) such that for any $x\in \mathbb R^n\setminus \bigcup_{i=1, \ldots, d\atop j=1, \ldots, k} B(\rho\eta_{ij}, \tau)$
$$3 W_\rho^2(x)\le \frac 12 \frac{V_0-\alpha^2}{4};\quad \beta\sum_{i=2}^d W_\rho^2(\hat\Theta_i x)\le \frac 12\frac{V_0-\alpha^2}{4}. $$ Now we let $\Pi_\pm (x)=\sum_{i=1}^d\sum_{j=1}^k e^{\pm \alpha|x-\rho\eta_{ij}|}.$ For $x\in \mathbb R^n\setminus \bigcup_{i=1, \ldots, d\atop j=1, \ldots, k} B(\rho\eta_{ij}, \tau)$ we get $$\begin{aligned} \mathcal L_0 \left(\Pi_\pm(x)\right)&=\sum_{i=2}^k\sum_{j=1}^ke^{\pm|x-\rho\eta_{ij}|}\left(-\alpha^2\mp\frac{\alpha(n-1)}{|x-\rho\eta_{ij}|}+V(x)-3W_\rho^2(x)-\beta \sum_{i=2}^d W_\rho^2(\hat\Theta_ix)\right)\\
&\ge \sum_{i=2}^k\sum_{j=1}^ke^{\pm|x-\rho\eta_{ij}|} \left(-\alpha^2\mp\frac{\alpha(n-1)}{|x-\rho\eta_{ij}|}+V_0 -\frac{V_0-\alpha^2}{4}\right)\end{aligned}$$ 
Hence
$$\mathcal L_0  (\Pi_\pm (x))\ge c_0 \Pi_\pm (x)$$ for some positive constant $c_0$ independent of $k$.\\ Then we can use $\Pi_\pm(x)$ as barriers and we can apply the maximum principle to the linear operator $\mathcal L_0 $ obtaining 
$$ |\Phi(x)|\le C\left(\|\mathcal L_0 (\Phi)\|_{*}+\sup_{i=1, \ldots, d\atop j=1, \ldots, k}\|\Phi\|_{L^\infty(\partial B(\rho\eta_{ij}, \tau))}\right)\sum_{i=1}^d\sum_{j=1}^k e^{-\alpha|x-\rho\eta_{ij}|}+\delta\sum_{i=1}^d\sum_{j=1}^ke^{\alpha|x-\rho\eta_{ij}|}$$ for any $\delta>0$ and $C$ independent of $k$ and $\delta$. Letting $\delta\to 0$ we get the estimate \eqref{puntuale}.\\

 Finally, we prove \eqref{norma} arguing by contradiction. Assuming that there is a sequence $(\Phi_{\mathtt n},  h_{\mathtt n})$ satisfying \eqref{pbintmodello} such that $$\|\Phi_{\mathtt n}\|_{*}=1\quad \mbox{and}\quad \|h _{\mathtt n}\|_{*}=o(1)\quad\mbox{as}\, k_{\mathtt n}\to+\infty.$$ In the sequel we will omit the dependence on ${\mathtt n}$. By \eqref{stimac} and the fact that  $\| \partial_\rho W_\rho\|_{*}\lesssim 1,$ we also have $\|\mathcal L_0 (\Phi)\|_{*}=o(1)$. Hence \eqref{stima2} implies the existence of a subsequence of $\eta_{ij}$ such that  \beq\label{stima3} \|\Phi\|_{L^\infty(\partial B(\rho\eta_{ij}, \tau))}\ge C>0\eeq for some fixed constant $C$ which is independent of $k$.\\ Since $\|\Phi\|_{L^\infty(\mathbb R^n)}\le 1$ by elliptic regularity estimates we get that $\|\Phi\|_{C^1(\mathbb R^n)}\le C$. By applying the Ascoli-Arzela's Theorem we get the existence of a subsequence of $\eta_{ij}$ such that $\Phi(x+\rho\eta_{ij})$ converges (on compact sets) to $\Phi_\infty$ which is a bounded solution   of $$-\Delta\Phi_\infty+\Phi_\infty -3 U^2\Phi_\infty=0\ \hbox{or}\ -\Delta\Phi_\infty+\Phi_\infty-\beta U^2\Phi_\infty=0.$$ In the first case  $\Phi_\infty\equiv 0$ because $\Phi\in H_\rho^\perp,$ while in the second case $\Phi_\infty\equiv 0$ because $\beta\not =\Lambda_\kappa$. Finally, a contradiction arises because of \eqref{stima3}.

\end{document}